\newtheorem{lem}{Lemma}[section]
\newtheorem{theo}[lem]{Theorem}
\newtheorem{cor}[lem]{Corollary}
\newtheorem{rem}[lem]{Remark}
\newcommand{\Hmmc}[2]{\leavevmode{\marginpar{\tiny%
\color{#1}$\hbox to 0mm{\hspace*{-0.5mm}$\leftarrow$\hss}%
\vcenter{\vrule depth 0.1mm height 0.1mm width \the\marginparwidth}%
\hbox to
0mm{\hss$\rightarrow$\hspace*{-0.5mm}}$\\\relax\raggedright #2}}}
\def\bs{\boldsymbol}
\def\ol{\overline}
\def\incl{\hookrightarrow}
\def\reals{\mathbb{R}}
\def\nat{\mathbb{N}}
\def\rt{\reals^{3}}
\def\ga{\Gamma}
\def\gap{\ga_{\Phi}}
\def\gat{\ga_{\!t}}
\def\gan{\ga_{\!n}}
\def\gatp{\ga_{\!t,\Phi}}
\def\ganp{\ga_{\!n,\Phi}}
\def\om{\Omega}
\def\omp{\om_{\Phi}}
\def\omb{\ol{\om}}
\def\ombp{\omb_{\Phi}}
\def\n{\mathrm{n}}
\def\sfL{\mathsf{L}}
\def\sfH{\mathsf{H}}
\def\sfC{\mathsf{C}}
\def\sfR{\mathsf{R}}
\def\sfD{\mathsf{D}}
\renewcommand{\L}{\sfL}
\renewcommand{\H}{\sfH}
\newcommand{\bH}{\bs\H}
\newcommand{\C}{\sfC}
\newcommand{\R}{\sfR}
\newcommand{\D}{\sfD}
\newcommand{\bR}{\bs\R}
\newcommand{\bD}{\bs\D}
\newcommand{\eps}{\varepsilon}
\DeclareMathOperator{\dist}{dist}
\DeclareMathOperator{\supp}{supp}
\DeclareMathOperator{\A}{A}
\DeclareMathOperator{\cA}{\mathcal{A}}
\DeclareMathOperator{\cL}{\mathcal{L}}
\DeclareMathOperator{\p}{\partial}
\DeclareMathOperator{\id}{id}
\DeclareMathOperator{\adj}{adj}
\DeclareMathOperator{\sym}{sym}
\DeclareMathOperator{\tr}{tr}
\DeclareMathOperator{\symtr}{symtr}
\DeclareMathOperator{\ed}{d}
\DeclareMathOperator{\na}{\nabla}
\DeclareMathOperator{\rot}{rot}
\DeclareMathOperator{\divergence}{div}
\def\div{\divergence}
\newcommand{\wt}[1]{\widetilde{#1}}
\newcommand{\norm}[1]{|#1|}
\newcommand{\scp}[2]{\langle#1,#2\rangle}
\newcommand{\bscp}[2]{\big\langle#1,#2\big\rangle}
\newcommand{\Bscp}[2]{\Big\langle#1,#2\Big\rangle}
\title[Shape Derivatives of the  Eigenvalues of the de Rham Complex: Part I]
{Shape Derivatives\\ of the  Eigenvalues of the de Rham Complex\\  
for Lipschitz deformations and variable coefficients:\\
Part I}
\author{Pier Domenico Lamberti}
\author{Dirk Pauly}
\author{Michele Zaccaron}
\address{Dipartimento di Tecnica e Gestione dei Sistemi Industriali, University of Padova, Italy}
\email[Pier Domenico Lamberti]{lamberti@math.unipd.it}
\address{Institut f\"ur Analysis, Technische Universit\"at Dresden, Germany}
\email[Dirk Pauly]{dirk.pauly@tu-dresden.de}
\address{Aix Marseille Univ , CNRS, Centrale Marseille, Institut Fresnel, Marseille, France}
\email[Michele Zaccaron]{michele.zaccaron@univ-amu.fr}
\keywords{}
\subjclass{}
\date{\today}
\thanks{{\it Corresponding Author}: Dirk Pauly}
\begin{document}

%%%%%%%%%%%%%%%%%%%%%%%%%%%%%%%%%%%%%%%%%%%%%%%%%%%%%%%%%%%%%%%%%%%%%%%%%%%%%%%%%%%%%%%%%%%%%%%%%%%%%

\begin{abstract}
We study eigenvalue problems for the de Rham complex on varying three dimensional domains. 
Our analysis includes the Helmholtz equation as well as the Maxwell system with mixed boundary conditions and non-constant coefficients. 
We provide Hadamard-type formulas for the shape derivatives under weak regularity assumptions on the domain and its perturbations. 
Our proofs are based on abstract results adapted to varying Hilbert complexes. 
As a bypass product of our analysis we give a proof of the celebrated Helmann-Feynman theorem 
both for simple and multiple eigenvalues of suitable families 
of self-adjoint operators in Hilbert space depending on possibly infinite dimensional parameters. 
This series of papers consists of Parts I and II.
\end{abstract}

%%%%%%%%%%%%%%%%%%%%%%%%%%%%%%%%%%%%%%%%%%%%%%%%%%%%%%%%%%%%%%%%%%%%%%%%%%%%%%%%%%%%%%%%%%%%%%%%%%%%%

\maketitle
\setcounter{tocdepth}{3}
{\small
\tableofcontents}

%%%%%%%%%%%%%%%%%%%%%%%%%%%%%%%%%%%%%%%%%%%%%%%%%%%%%%%%%%%%%%%%%%%%%%%%%%%%%%%%%%%%%%%%%%%%%%%%%%%%%

\section{Introduction}

The analysis of the dependence of the eigenvalues and eigenfunctions of elliptic operators upon variation of the underlying domain is a classical problem considered in many papers in the literature with applications in approximation, optimization, homogenization, control theory and mathematical physics. It is impossible to give an account of all contributions in the literature and we refer to the monograph \cite{henry} for an introduction to this topic in particular to the method of transplantation used in this paper. Needless to say that the  Laplace operator and  other second order partial differential equations have received much more attention than higher order operators and systems, the analysis of which often leads to various technical and theoretical  obstructions  as well as paradoxes, see for instance  \cite{arferlamtri, arferlamdumb, buosopoly, ferraressotri, ferlam} for polyharmonic operators and  to \cite{buososystems, buosoreissner} for elliptic systems. 
From this point of view, the case of the Maxwell system has been investigated even less, cf.  \cite{fermar, jimbo, johnson, LZ2021a, lamzac23, zac23}.  In particular we note that  differentiability results and Hadamard-type formulas for shape derivatives are proved in   \cite{jimbo, LZ2021a, LZ2022a, zac23}   under suitable regularity assumptions on the domains and the corresponding perturbations.  

The main aim of the present series of papers is to relax those regularity assumptions gaining one degree of smoothness and to provide a unified approach including both the Helmholz equation and the Maxwell system.  This is done by analyzing the corresponding  de Rham complex and its domain perturbations. A further contribution of our papers consists in the fact that  we consider nonconstant physical parameters such as the electric permittivity  $\eps$ and the magnetic permeability $\mu$.     In particular we give a rigorous proof of a formula found by  Hiromasa Hirakawa in \cite[pp.~91-93]{hira}  which is a Hadamard-type formula for the Maxwell system.      Moreover, we consider  the general case of mixed Dirichlet-Neumann boundary conditions.

We note that the proof of the Hadamard formulas can be obtained at a formal level  by applying the Hellmann-Feymann Theorem, a classical result is quantum mechanics that reduces here to a straightforward differentiation of the Rayleigh quotients depending on a parameter (see  \cite{este} for a recent discussion on this theorem and references).  However, in order to discuss the dependence of the eigenvalues on infinite dimensional parameters and to consider multiple eigenvalues, we follow the approach developed in \cite{LLdC2004a}  and in particular we consider the elementary symmetric functions of the eigenvalues since  these functions depend smoothly on the parameters as simple eigenvalues do.  The results in  \cite{LLdC2004a} concern general families of compact selfadjoint operators in Hilbert space with variable scalar product and are applied in \cite{LZ2021a} to the Maxwell problem. To do so, the authors of \cite{LZ2021a} have to consider a penalised problem  which requires $\C^{1,1}$ regularity assumptions on the domain perturbations. Here, in order to consider domain perturbations of class $\C^{0,1}$,  we do not penalise the problem  but this prevents us from using the results of \cite{LLdC2004a}   in a direct way because the operators under consideration are selfadjoint but not compact\footnote{the so-called reduced operators are compact but this does not help too much because  their domain depends too heavily on the perturbations}. Thus we are forced to give new proofs of abstract theorems concerning families of selfadjoint operators in Hilbert space. As a bypass product of our analysis, we provide a proof of 
a general version of the  Helmann-Feymann Theorem for families of operators  suitable for de Rham complexes in Hilbert spaces, 
see Part II of this paper at hand.

This Part I of the paper series is organised as follows. 
Section~\ref{presec} is devoted to notations and preliminaries on the Functional Analysis Toolbox.  
Section~\ref{eigensec} is devoted to the analysis of the eigenvalue problem for the de Rham complex on transplanted domains.
In Section \ref{sec:conclu} we conclude this first part 
with some formal computations to derive the shape derivatives of the eigenvalues 
assuming that those are simple and the corresponding eigenvectors are differentiable.

In Part II of this series of papers we present 
Hadamard type formulas and related findings
obtained by a direct application of the  Helmann-Feymann Theorem
together with sound proofs of all results.

We conclude this introduction with two subsections 
where we highlight the main problems under consideration  
and briefly discuss the approach of domain transplantation used in this paper. 

Until stated otherwise, let $\om$ be a \emph{bounded open set} in $\rt$ with boundary $\ga$
and let $\lambda_{0},\lambda_{1}>0$. Moreover, let $\nu\in\L^{\infty}(\om,\reals)$
be positive with respect to the $\L^{2}(\om)$-inner product,
and let $\eps$ and $\mu$ be \emph{admissible} symmetric matrix fields, i.e.,
$\eps$ and $\mu$ belong to $\L^{\infty}(\om,\reals^{3\times3}_{\sym})$
and are positive with respect to the $\L^{2}(\om)$-inner product, cf.~\cite{BPS2016a,P2017a,P2019b,PS2022a}.   
The required regularity of $\ga, \nu, \eps, \mu$ will be specified along the paper.

\subsection{Eigenvalues of the De Rham Complex}

We shall consider the Dirichlet Maxwell eigenvalue problem 
\begin{align}
\begin{aligned}
\label{eq:maxev1}
\eps^{-1}\rot\mu^{-1}\rot E&=\lambda_{1}E
&\text{in }&\om,\\
\n\times E&=0
&\text{on }&\ga.
\end{aligned}
\end{align}
The corresponding Neumann Maxwell eigenvalue problem reads
\begin{align}
\begin{aligned}
\label{eq:maxev2}
\eps^{-1}\rot\mu^{-1}\rot E&=\lambda_{1}E
&\text{in }&\om,\\
\n\times\mu^{-1}\rot E&=0
&\text{on }&\ga.
\end{aligned}
\end{align}
Note that any solution of \eqref{eq:maxev1} or \eqref{eq:maxev2} automatically satisfies $\div\eps E=0$ in $\om$,
and that in \eqref{eq:maxev1} and \eqref{eq:maxev2} we have additional 
$\n\cdot\rot E=0$ and $\n\cdot\eps E=0$ on $\ga$, respectively.

We investigate also mixed boundary conditions, i.e., 
the Maxwell eigenvalue problem with mixed Dirichlet/Neumann boundary conditions 
\begin{align}
\begin{aligned}
\label{eq:maxev3}
\eps^{-1}\rot\mu^{-1}\rot E&=\lambda_{1}E
&\text{in }&\om,\\
\n\times E&=0
&\text{on }&\gat,\\
\n\times\mu^{-1}\rot E&=0
&\text{on }&\gan,
\end{aligned}
\end{align}
where $\ga$ is decomposed into two relatively open subsets
$\emptyset\subset\gat\subset\ga$ and $\gan:=\ga\setminus\ol{\gat}$. 
Note that again we have $\div\eps E=0$ and $\n\cdot\rot E|_{\gat}=0$ and $\n\cdot\eps E|_{\gan}=0$.

Moreover, we shall discuss the full spectrum of the de Rham complex.
Hence, we also investigate the scalar Laplacian and its dual, i.e.,
\begin{align}
\begin{aligned}
\label{eq:lapev1}
-\nu^{-1}\div\eps\na u&=\lambda_{0}u
&\text{in }&\om,\\
u&=0
&\text{on }&\gat,\\
\n\cdot\eps\na u&=0
&\text{on }&\gan,
\end{aligned}
\intertext{and}
\begin{aligned}
\label{eq:lapev2}
-\na\nu^{-1}\div\eps H&=\lambda_{0}H
&\text{in }&\om,\\
\nu^{-1}\div\eps H&=0
&\text{on }&\gat,\\
\n\cdot\eps H&=0
&\text{on }&\gan.
\end{aligned}
\end{align}
As in \eqref{eq:maxev3} it holds $\rot H=0$ and $\n\times H|_{\gat}=0$ but only
$\int_{\ga}\nu^{-1}\div\eps H=0$ if $\gan=\ga$.

In view of \eqref{eq:maxev3} and \eqref{eq:lapev2} we shall also discuss 
the generalised vector Laplacian
\begin{align}
\begin{aligned}
\label{eq:lapev3}
(\eps^{-1}\rot\mu^{-1}\rot-\na\nu^{-1}\div\eps)E&=\lambda_{0,1}E
&\text{in }&\om,\qquad
\lambda_{0,1}\in\{\lambda_{0},\lambda_{1}\},\\
\n\times E=0,\quad
\nu^{-1}\div\eps E&=0
&\text{on }&\gat,\\
\n\times\mu^{-1}\rot E=0,\quad
\n\cdot\eps E&=0
&\text{on }&\gan.
\end{aligned}
\end{align}
Note that for $\eps$, $\mu$, and $\nu$ being the identity mappings we have
$$\eps^{-1}\rot\mu^{-1}\rot-\na\nu^{-1}\div\eps=\rot\rot-\na\div =         -\vec\Delta.$$

\subsection{Shape Derivatives of Eigenvalues}

We intend to study variations of the domain and the boundary conditions
by replacing $\om$ and the boundary parts $\gat$, $\gan$ with 
$$\omp:=\Phi(\om),\qquad
\gap:=\Phi(\ga),\qquad
\gatp:=\Phi(\gat),\qquad
\ganp:=\Phi(\gan),$$
respectively, where 
$$\Phi:\om\to\omp$$ 
is a bi-Lipschitz transformation.
In particular, for $\ell\in\{0,1\}$,
we are interested in the variations of the eigenvalues   
$$0<\lambda_{\ell,1}(\Phi)\leq\lambda_{\ell,2}(\Phi)\leq\dots<\lambda_{\ell,k-1}(\Phi)\leq\lambda_{\ell,k}(\Phi)\leq\dots\to\infty$$  
in the domain $\omp$ and their elementary symmetric functions
with respect to changing transformations $\Phi$.

For simplicity, assume here that  $\eps$, $\mu$ are  the identity matrices  and that $\nu=1$. 
Let $\lambda_{0,k}(\Phi)$ and $\lambda_{1,k}(\Phi)$ be eigenvalues
with  eigenvectors $u$ and $E$
of \eqref{eq:lapev1} and \eqref{eq:maxev3}, respectively. As is well-known these eigenvalues can be written by means of Rayleigh quotients as 
\begin{equation}
\label{introray}
\lambda_{0,k}(\Phi)
=\frac{\norm{\na u}_{\L^{2}(\omp)}^{2}}{\norm{ u}_{\L^{2}(\omp)}^2},\qquad
\lambda_{1,k}(\Phi)
=\frac{\norm{\rot E}_{\L^{2}(\omp)}^{2}}{  \norm{ E}_{\L^{2}(\omp)}^{2}}.
\end{equation}
In particular, assuming  that the eigenvectors are normalized in $\L^{2}(\omp)$ we have 
$$
\lambda_{0,k}(\Phi)
=\norm{\na u}_{\L^{2}(\omp)}^{2},\qquad
\lambda_{1,k}(\Phi)
= \norm{\rot E}_{\L^{2}(\omp)}^{2}.
$$
Then also the dual eigenvectors 
\begin{align*}
H&:=\lambda_{0,k}^{-1/2}(\Phi)\na u,
&
B&:=\lambda_{1,k}^{-1/2}(\Phi)\rot E
\intertext{are $\L^{2}(\omp)$-normalised eigenvectors
of \eqref{eq:lapev2} and the respective dual of \eqref{eq:maxev3}. Moreover, we have the dualities}
u&=-\lambda_{0,k}^{-1/2}(\Phi)\div H,
&
E&=\lambda_{1,k}^{-1/2}(\Phi)\rot B,\\
\lambda_{0,\Phi}
&=\norm{\div H}_{\L^{2}(\omp)}^{2},
&
\lambda_{1,\Phi}
&=\norm{\rot B}_{\L^{2}(\omp)}^{2}.
\end{align*}

Note that the dual of \eqref{eq:maxev3} reads 
\begin{align}
\begin{aligned}
\label{eq:maxev3dual}
\mu^{-1}\rot\eps^{-1}\rot B&=\lambda_{1}B
&\text{in }&\om,\\
\n\times\eps^{-1}\rot B&=0   
&\text{on }&\gat,\\
\n\times B&=0
&\text{on }&\gan,
\end{aligned}
\end{align}
which incorporates also the conditions $\div\mu B=0$ and $\n\cdot \mu B  |_{\gat}=0$ and $\n\cdot \rot B |_{\gan}=0$.

In this paper, among other results, we  prove  Hadamard type formulas for the directional derivatives of the maps
$\Phi\mapsto \lambda_{l,k}(\Phi)$ for $l\in \{0,1\}$. 
This means that, given a fixed direction $\wt{\Psi}$ in the space of Lipschitz transformations $\Phi$, we compute the limit 
\begin{equation}
\label{eq:intro-deriv}
\p_{\wt{\Psi}}\lambda_{l,k}(\Phi)
=\lim_{h\to0}\frac{  \lambda_{l,k}(\Phi+h\wt{\Psi})-\lambda_{l,k}(\Phi)}{h}.
\end{equation}
Recall
$\p_{\wt{\Psi}}\lambda_{l,k}(\Phi)
=\lambda_{l,k}'(\Phi)\wt{\Psi}$.
%Note that here and in the sequel, in order to simplify our notation, 
%we do not indicate the direction $\wt{\Psi}$ in the symbol $\p=\p_{\wt{\Psi}}$.   
As customary, it is convenient to consider $\wt{\Psi}$ as the pull-back of a transformation $\Psi$ defined on $\omp$, that is  
$\wt{\Psi}=\Psi\circ\Phi$, and to express the formulas for the derivatives as volume or boundary integrals on $\omp$.  
At a formal level, assuming
the differentiability of the eigenvalues and eigenvectors with respect to $\Phi$
(which might fail for multiple eigenvalues, cf.~Part II), the Hellman-Feynman Theorem  
allows to obtain the formulas for $\p_{\wt{\Psi}}\lambda_{l,k}(\Phi)$ by differentiating the Rayleigh quotients \eqref{introray} 
with respect to $\Phi$ (in direction $\wt{\Psi}$ and keeping fixed the eigenvectors involved).  By doing so, we obtain
\begin{subequations}
\label{eq:derivintro}
\begin{align}
\label{eq:deriv0intro}
-\frac{\p_{\wt{\Psi}}\lambda_{0,k}(\Phi)}{\lambda_{0,k}(\Phi)}
&=\bscp{(\symtr\na\Psi)H}{H}_{\L^{2}(\omp)}
+\bscp{(\div\Psi)u}{u}_{\L^{2}(\omp)},\\
\label{eq:deriv1intro}
\frac{\p_{\wt{\Psi}}\lambda_{1,k}(\Phi)}{\lambda_{1,k}(\Phi)}
&=\bscp{(\symtr\na\Psi)B}{B}_{\L^{2}(\omp)}
+\bscp{(\symtr\na\Psi)E}{E}_{\L^{2}(\omp)},
\end{align}
\end{subequations}
cf.~\eqref{eq:derevformal3} and Part II, where 
$$\symtr\na\Psi
:=2\sym\na\Psi-\tr\na\Psi
=\na\Psi+(\na\Psi)^{\top}-\div\Psi.$$ 
Then, under more regularity assumptions on eigenvectors, 
it is possible to integrate by parts and write these formulas 
by means of surface integrals as follows
\begin{subequations}
\label{eq:derivintrobis}
\begin{align}
\label{eq:deriv0introbis}
\frac{\p_{\wt{\Psi}}\lambda_{0,k}(\Phi)}{\lambda_{0,k}(\Phi)}
&=\int_{\ganp} \big(  |H|^2-|u|^2 \big)\Psi \cdot n\,  d\sigma 
-\int_{\gatp}\big(|H|^2-|u|^2\big)  \Psi \cdot n\, d\sigma,\\
\label{eq:deriv1introbis}
\frac{\p_{\wt{\Psi}}\lambda_{1,k}(\Phi)}{\lambda_{1,k}(\Phi)}
&=  \int_{\ganp}\big( |B|^2-|E|^2\big) \Psi \cdot n\,  d\sigma 
-\int_{\gatp}\big( |B|^2-|E|^2 \big)\Psi \cdot n\, d\sigma,
\end{align}
\end{subequations}
see Part II. 
These computations are quite involved.

Note that formula \eqref{eq:deriv0introbis} is well known at least for non-mixed boundary conditions, cf. e.g.~\cite{LLdC2004a, lalaneu},
and formula \eqref{eq:deriv1introbis} has  been recently proved in \cite{LZ2021a, LZ2022a} for sufficiently regular perturbations.  Formula~\eqref{eq:deriv1introbis} was found in a heuristic way in \cite{hira} with $\Gamma_n=\emptyset$  for arbitrary $\eps$ and $\mu$ and  another interesting equivalent  formula  was proved in \cite{jimbo}, see also~\cite{zac23}.
%For more details and the general case see Section~\ref{sec:shape1simca} and Section~\ref{applications}. 

It is important to observe that the left and right  derivatives in \eqref{eq:intro-deriv} 
, i.e., $h\to0^{\mp}$,
coincide if the eigenvalue under consideration is simple, while  they might be different  if the eigenvalue is  multiple   (the difference corresponds to the choice of different eigenvectors in the formulas). 
This phenomenon  is well-known for many eigenvalue problems associated with families of self-adjoint operators depending on some parameters.  It is also well-known  that for perturbations  depending on one scalar parameter, it is possible to apply the Rellich theorem
and relabel the eigenvalues in order to guarantee their differentiability. On the other hand it was proved in \cite{LLdC2004a} that the elementary symmetric functions of the eigenvalues which bifurcate at a multiple eigenvalue are differentiable no matter whether the parameter involved is one dimensional or not.

\section{Preliminaries}
\label{presec}

\subsection{Sobolev Spaces and Boundary Conditions}

Let $k\in\nat_{0}\cup\{\infty\}$.
We define (for scalar, vector, or tensor fields)   
\begin{align*}
\C^{k}_{\gat}(\om)
&:=\big\{\psi|_{\om}\,:\,\psi\in\C^{k}(\rt),\;\supp\psi\text{ compact},\;\dist(\supp\psi,\gat)>0\big\},\\
\C^{0,1}_{\gat}(\om)
&:=\big\{\psi|_{\om}\,:\,\psi\in\C^{0,1}(\rt),\;\supp\psi\text{ compact},\;\dist(\supp\psi,\gat)>0\big\}.
%\C^{0,k}_{\gat}(\om)
%&:=\big\{\psi|_{\om}\,:\,\psi\in\C^{0,k}(\rt),\;\supp\psi\text{ compact},\;\dist(\supp\psi,\gat)>0\big\},\qquad
%k\geq1.
\end{align*}
 Recall that $\Gamma_t$ is a relatively open subset of $\Gamma$.
Note that $\C^{k}_{\emptyset}(\om)$ and $\C^{0,1}_{\emptyset}(\om)$ 
are often denoted by $\C^{k}(\omb)$ and $\C^{0,1}(\omb)$, respectively.
With the Lebesgue space $\L^{2}(\om)$ we have the standard Sobolev spaces in the weak sense
\begin{align*}
\bH^{k}(\om)
&:=\big\{\psi\in\L^{2}(\om)\,:\,\p^{\alpha}\psi\in\L^{2}(\om)\;\;\forall\,|\alpha|\leq k\big\}\\
&\;=\big\{\psi\in\L^{2}(\om)\,:\,
\forall\,|\alpha|\leq k\;\;
\exists\Psi_{\alpha}\in\L^{2}(\om)\;\;
\forall\,\theta\in\C^{\infty}_{\ga}(\om)\\
&\hspace{100pt}
\scp{\psi}{\p^{\alpha}\theta}_{\L^{2}(\om)}=(-1)^{|\alpha|}\scp{\psi_{\alpha}}{\theta}_{\L^{2}(\om)}\big\},\\
\bR(\om)
&:=\big\{\Psi\in\L^{2}(\om)\,:\,\rot\Psi\in\L^{2}(\om)\}\\
&\;=\big\{\Psi\in\L^{2}(\om)\,:\,
\exists\Psi_{\rot}\in\L^{2}(\om)\;\;
\forall\,\Theta\in\C^{\infty}_{\ga}(\om)\quad
\scp{\Psi}{\rot\Theta}_{\L^{2}(\om)}=\scp{\Psi_{\rot}}{\Theta}_{\L^{2}(\om)}\big\},\\
\bD(\om)
&:=\big\{\Psi\in\L^{2}(\om)\,:\,\div\Psi\in\L^{2}(\om)\}\\
&\;=\big\{\Psi\in\L^{2}(\om)\,:\,
\exists\psi_{\div}\in\L^{2}(\om)\;\;
\forall\,\theta\in\C^{\infty}_{\ga}(\om)\quad
\scp{\Psi}{\na\theta}_{\L^{2}(\om)}=-\scp{\psi_{\div}}{\theta}_{\L^{2}(\om)}\big\}.
\end{align*}
Note that $\bR(\om)$ and $\bD(\om)$ are the well known spaces 
$\bH(\rot,\om)$ and $\bH(\div,\om)$, respectively.
We introduce boundary conditions in the strong sense by
\begin{align*}
\H^{k}_{\gat}(\om)&:=\ol{\C^{\infty}_{\gat}(\om)}^{\bH^{k}(\om)},
&
\R_{\gat}(\om)&:=\ol{\C^{\infty}_{\gat}(\om)}^{\bR(\om)},
&
\D_{\gat}(\om)&:=\ol{\C^{\infty}_{\gat}(\om)}^{\bD(\om)}.
\end{align*}
By standard Friedrichs' mollification we observe
\begin{align*}
\H^{1}_{\gat}(\om)&=\ol{\C^{0,1}_{\gat}(\om)}^{\bH^{1}(\om)},
&
\R_{\gat}(\om)&=\ol{\C^{0,1}_{\gat}(\om)}^{\bR(\om)},
&
\D_{\gat}(\om)&=\ol{\C^{0,1}_{\gat}(\om)}^{\bD(\om)}.
\end{align*}
Also boundary conditions in the weak sense are introduced by
\begin{align*}
\bH^{k}_{\gat}(\om)
&:=\big\{\psi\in\bH^{k}(\om)\,:\,
\forall\,|\alpha|\leq k\;\;
\forall\,\theta\in\C^{\infty}_{\gan}(\om)\quad
\scp{\psi}{\p^{\alpha}\theta}_{\L^{2}(\om)}=(-1)^{|\alpha|}\scp{\p^{\alpha}}{\theta}_{\L^{2}(\om)}\big\},\\
\bR_{\gat}(\om)
&:=\big\{\Psi\in\bR(\om)\,:\,
\forall\,\Theta\in\C^{\infty}_{\gan}(\om)\quad
\scp{\Psi}{\rot\Theta}_{\L^{2}(\om)}=\scp{\rot\Psi}{\Theta}_{\L^{2}(\om)}\big\},\\
\bD_{\gat}(\om)
&:=\big\{\Psi\in\bD(\om)\,:\,
\forall\,\theta\in\C^{\infty}_{\gan}(\om)\quad
\scp{\Psi}{\na\theta}_{\L^{2}(\om)}=-\scp{\div\Psi}{\theta}_{\L^{2}(\om)}\big\}.
\end{align*}
Note that for $\gat=\emptyset$ we have
$$\bH^{k}_{\emptyset}(\om)=\bH^{k}(\om),\qquad
\bR_{\emptyset}(\om)=\bR(\om),\qquad
\bD_{\emptyset}(\om)=\bD(\om).$$

\subsection{Weak and Strong Boundary Conditions Coincide}

For full boundary conditions there is a simple result
that `weak equals strong' holds without any additional assumptions
in a certain sense (the test fields can be chosen from a possibly larger space).
Unfortunately, the proof does not allow for mixed boundary conditions.

\begin{lem}[weak equals strong for full boundary conditions]
\label{lem:densefullbc1}
It holds
\begin{align*}
\H^{1}_{\ga}(\om)
&=\big\{\psi\in\bH^{1}(\om)\,:\,
\forall\,\Theta\in\bD(\om)\quad
\scp{\psi}{\div\Theta}_{\L^{2}(\om)}=-\scp{\na\psi}{\theta}_{\L^{2}(\om)}\big\},\\
\R_{\ga}(\om)
&=\big\{\Psi\in\bR(\om)\,:\,
\forall\,\Theta\in\bR(\om)\quad
\scp{\Psi}{\rot\Theta}_{\L^{2}(\om)}=\scp{\rot\Psi}{\Theta}_{\L^{2}(\om)}\big\},\\
\D_{\ga}(\om)
&=\big\{\Psi\in\bD(\om)\,:\,
\forall\,\theta\in\bH^{1}(\om)\quad
\scp{\Psi}{\na\theta}_{\L^{2}(\om)}=-\scp{\div\Psi}{\theta}_{\L^{2}(\om)}\big\}.
\end{align*}
\end{lem}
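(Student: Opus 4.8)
The plan is to recognise each of the three identities as the statement $T=T^{**}$ for the closed, densely defined differential operator $T$ of the de Rham complex carrying \emph{full} homogeneous boundary conditions, so that von Neumann's adjoint machinery does all of the analytic work. Accordingly, I would introduce the operators $\mr\na$, $\mr\rot$, $\mr\div$, acting as gradient, rotation, and divergence on the domains $\H^{1}_{\ga}(\om)$, $\R_{\ga}(\om)$, $\D_{\ga}(\om)$ respectively, regarded as densely defined operators in $\L^{2}(\om)$. They are densely defined because every domain contains $\C^{\infty}_{\ga}(\om)$, which is dense in $\L^{2}(\om)$; and they are closed, since the graph norm of each operator is precisely the norm of the corresponding weak space $\bH^{1}(\om)$, $\bR(\om)$, $\bD(\om)$, of which the domain is a closed subspace by its very definition as a closure.

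The heart of the argument is the computation of the adjoints,
\[
(\mr\na)^{*}=-\div|_{\bD(\om)},\quad
(\mr\rot)^{*}=\rot|_{\bR(\om)},\quad
(\mr\div)^{*}=-\na|_{\bH^{1}(\om)},
\]
with domains equal to the \emph{entire} weak spaces. For the first one the inclusion ``$\supseteq$'' is just the defining relation of $\bD(\om)$ --- valid a priori only for test fields in $\C^{\infty}_{\ga}(\om)$ --- extended to all of $\H^{1}_{\ga}(\om)$ by density and continuity of the $\L^{2}$ pairings, while ``$\subseteq$'' follows by inserting $\theta\in\C^{\infty}_{\ga}(\om)$ into the adjoint relation, which is exactly the condition defining membership in $\bD(\om)$. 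Once these are in hand, the theorem $T=\ol{T}=T^{**}$ finishes everything: for instance, $u\in D((-\div)^{*})$ means there is $g\in\L^{2}(\om)$ with $\scp{-\div\Theta}{u}_{\L^{2}(\om)}=\scp{\Theta}{g}_{\L^{2}(\om)}$ for all $\Theta\in\bD(\om)$; testing against $\Theta\in\C^{\infty}_{\ga}(\om)$ identifies $g=\na u$ and hence $u\in\bH^{1}(\om)$, whereupon the relation reads $\scp{u}{\div\Theta}_{\L^{2}(\om)}=-\scp{\na u}{\Theta}_{\L^{2}(\om)}$, i.e. $u$ lies in the right-hand side. Thus $\H^{1}_{\ga}(\om)=D(\mr\na)=D((-\div)^{*})$ equals the claimed set, and the two remaining identities follow by the identical unwinding. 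Note that the genuinely non-trivial inclusion --- that a field satisfying the weak condition can be approximated by fields from $\C^{\infty}_{\ga}(\om)$ --- is precisely what is supplied, for free, by $T^{**}=T$.

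The main obstacle, and the only place where real care is needed, is showing that the adjoint domains are the full spaces $\bD(\om)$, $\bR(\om)$, $\bH^{1}(\om)$ with \emph{no} residual boundary condition surviving on $\ga$. This is also exactly the point that confines the result to full boundary conditions: with a mixed splitting $\ga=\ol{\gat}\cup\gan$ the adjoint of $\mr\na_{\gat}$ is $-\div_{\gan}$, whose domain is the smaller space $\bD_{\gan}(\om)$, so the same argument would only characterise $\H^{1}_{\gat}(\om)$ through test fields in $\bD_{\gan}(\om)$ rather than in all of $\bD(\om)$, and the clean statement of the lemma would be lost.
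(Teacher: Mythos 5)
Your argument is correct and is essentially the paper's own proof: the paper likewise introduces the closed, densely defined operators $\na_{\ga}$, $\rot_{\ga}$, $\div_{\ga}$ on $\H^{1}_{\ga}(\om)$, $\R_{\ga}(\om)$, $\D_{\ga}(\om)$, identifies their adjoints as $-\div$, $\rot$, $-\na$ on the full weak spaces $\bD(\om)$, $\bR(\om)$, $\bH^{1}(\om)$, and concludes via $\A^{**}=\ol{\A}=\A$. Your write-up merely makes explicit the two inclusions in the adjoint computation, which the paper asserts without detail.
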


See the appendix for a proof.  For a definition of the segment property used in the following lemma we refer to \cite{adams}.

\begin{lem}[weak equals strong for no boundary conditions/density of smooth fields]
\label{lem:densenobc}
Let $\om$ have additionally the segment property. Then 
$$\H^{k}_{\emptyset}(\om)=\bH^{k}(\om),\qquad
\R_{\emptyset}(\om)=\bR(\om),\qquad
\D_{\emptyset}(\om)=\bD(\om).$$
In other words $\C^{\infty}_{\emptyset}(\om)=\C^{\infty}(\omb)$ is dense in 
$\bH^{k}(\om)$, $\bR(\om)$, and $\bD(\om)$, respectively.
\end{lem}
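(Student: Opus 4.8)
The plan is to prove the three density statements simultaneously by the classical localisation--translation--mollification scheme, the only geometric input being the segment property. Since $\om$ is bounded, $\omb$ is compact, so by the segment property every boundary point $x\in\ga$ admits an open neighbourhood $U_{x}$ and a direction $y_{x}\in\rt\setminus\{0\}$ with $z+ty_{x}\in\om$ for all $z\in\omb\cap U_{x}$ and all $t\in(0,1)$. I would extract a finite subcover $U_{1},\dots,U_{N}$ of $\ga$ with associated directions $y_{1},\dots,y_{N}$, add an open set $U_{0}$ with $\ol{U_{0}}\subset\om$ so that $\omb\subset\bigcup_{j=0}^{N}U_{j}$, and choose a subordinate partition of unity by smooth functions $\varphi_{j}\in\C^{\infty}(\rt)$ with compact $\supp\varphi_{j}\subset U_{j}$ and $\sum_{j}\varphi_{j}=1$ on a neighbourhood of $\omb$. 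For a field $\Psi$ from any of the three spaces it then suffices, by linearity and the Leibniz rules for $\na$, $\rot$, $\div$ (which keep $\varphi_{j}\Psi$ in the respective space), to approximate each $\varphi_{j}\Psi$ in the graph norm by elements of $\C^{\infty}_{\emptyset}(\om)=\C^{\infty}(\omb)$.

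The interior piece $\varphi_{0}\Psi$ has compact support in $\om$, so a standard Friedrichs mollification produces approximants in $\C^{\infty}_{\emptyset}(\om)$ converging in the graph norm, using that mollification commutes with the constant-coefficient operators $\na$, $\rot$, $\div$ and with each $\p^{\alpha}$. For a boundary piece $\Psi_{j}:=\varphi_{j}\Psi$, $j\geq1$, I would introduce the translates $\Psi_{j}^{t}(\cdot):=\Psi_{j}(\cdot+ty_{j})$ for small $t>0$. The segment property guarantees that on a neighbourhood of $\omb$ the support of $\Psi_{j}^{t}$ is pushed strictly into $\om$, i.e.\ $\dist(\supp\Psi_{j}^{t}\cap\omb,\ga)>0$; hence a mollification of $\Psi_{j}^{t}$ with a sufficiently small parameter is smooth on a whole neighbourhood of $\omb$ and thus restricts to an element of $\C^{\infty}(\omb)$.

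Convergence follows from two facts. First, translation is strongly continuous on $\L^{2}$, so $\Psi_{j}^{t}\to\Psi_{j}$ in $\L^{2}(\om)$ as $t\to0$. Second, because $\na$, $\rot$, $\div$ and each $\p^{\alpha}$ have constant coefficients they commute with translation, whence $\rot\Psi_{j}^{t}=(\rot\Psi_{j})^{t}\to\rot\Psi_{j}$ in $\L^{2}(\om)$, and analogously for $\div$ and the higher derivatives. Thus $\Psi_{j}^{t}\to\Psi_{j}$ in the full graph norm, and for fixed small $t$ the remaining mollification error is controlled by the same commutation argument. A triangle inequality over $j=0,\dots,N$ then yields the claim for all three spaces at once.

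The main obstacle is the geometric bookkeeping of the segment property: one must verify, uniformly on the relevant neighbourhood, that the single direction $y_{j}$ moves the entire portion of $\supp\Psi_{j}$ lying in $\omb$ off the boundary, so that one mollification radius works. This is where the compactness of $\ga$ and the careful matching of the cover $\{U_{j}\}$ to the directions $\{y_{j}\}$ is essential, whereas the analytic steps (continuity of translation, commutation with constant-coefficient operators, and the mollification estimates) are routine.
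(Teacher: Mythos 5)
Your argument is correct and is essentially the proof the paper has in mind: the paper simply cites Agmon and Wloka for the $\H^{1}$ case and remarks that the localisation--translation--mollification scheme carries over verbatim to $\rot$ and $\div$ (since these constant-coefficient operators commute with translation and mollification) and to $\H^{k}$ by induction, which is exactly what you spell out. The only point to tighten in a full write-up is the geometric step you yourself flag: what the segment property moves off the closed set $\omb\cap\supp\varphi_{j}$ is the singular set $\ga-ty_{j}$ of the translated (zero-extended) field, not its support, and it is this positive distance that licenses a single mollification radius.
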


\begin{proof}
The proof for $\H^{1}(\om)$ can be found, e.g., in Agmon \cite{A1965a} or in Wloka \cite[Theorem 3.6]{W1982a},
and it literally carries over to $\R(\om)$ and $\D(\om)$
as the mollifiers work similarly for $\rot$ and $\div$.
The result for $\H^{k}(\om)$ follows by induction.
\end{proof}

In case of Lemma \ref{lem:densenobc}, we set 
$$\H^{k}(\om):=\H^{k}_{\emptyset}(\om)=\bH^{k}(\om),\qquad
\R(\om):=\R_{\emptyset}(\om)=\bR(\om),\qquad
\D(\om):=\D_{\emptyset}(\om)=\bD(\om).$$

\begin{lem}[weak equals strong for full boundary conditions]
\label{lem:densefullbc2}
Let $\om$ have additionally the segment property. Then
$$\H^{k}_{\ga}(\om)=\bH^{k}_{\ga}(\om),\qquad
\R_{\ga}(\om)=\bR_{\ga}(\om),\qquad
\D_{\ga}(\om)=\bD_{\ga}(\om).$$
\end{lem}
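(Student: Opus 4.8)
The plan is to establish the three identities $\H^{k}_{\ga}(\om)=\bH^{k}_{\ga}(\om)$, $\R_{\ga}(\om)=\bR_{\ga}(\om)$, and $\D_{\ga}(\om)=\bD_{\ga}(\om)$ by proving both inclusions. The inclusions $\H^{k}_{\ga}(\om)\subset\bH^{k}_{\ga}(\om)$, $\R_{\ga}(\om)\subset\bR_{\ga}(\om)$, and $\D_{\ga}(\om)\subset\bD_{\ga}(\om)$ are the routine directions and hold without the segment property: a field in $\R_{\ga}(\om)$, say, is by definition the $\bR(\om)$-limit of fields in $\C^{\infty}_{\ga}(\om)$, and for each such approximating field the defining weak-boundary-condition integration-by-parts identity against any $\Theta\in\C^{\infty}_{\gan}(\om)=\C^{\infty}_\emptyset(\om)$ (here $\gan=\emptyset$) holds trivially by genuine integration by parts with no boundary term, since the test field has support meeting $\ga$ only through $\gan$; passing to the $\bR(\om)$-limit preserves the identity, placing the limit in $\bR_{\ga}(\om)$. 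The same argument works verbatim for the $\H^k$ and $\D$ cases.

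The substantive direction is the reverse inclusion $\bR_{\ga}(\om)\subset\R_{\ga}(\om)$ and its analogues, i.e.\ showing every field satisfying the weak full boundary condition can actually be approximated in the graph norm by fields from $\C^{\infty}_{\ga}(\om)$. First I would invoke Lemma \ref{lem:densefullbc1}, which characterises $\R_{\ga}(\om)$ (and the $\H^1$, $\D$ spaces) as exactly those fields satisfying the weak identity tested against the \emph{full} spaces $\bR(\om)$, $\bD(\om)$, $\bH^1(\om)$ rather than merely against smooth test fields $\C^{\infty}_{\gan}(\om)$. The point is then that under the segment property these two weak formulations coincide: by Lemma \ref{lem:densenobc} the smooth fields $\C^{\infty}(\omb)=\C^{\infty}_\emptyset(\om)$ are dense in $\bR(\om)$, $\bD(\om)$, and $\bH^1(\om)$, so a weak identity that holds for all smooth test fields automatically extends by density and continuity of the pairing to all test fields in the full space. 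Hence the weak condition $\Psi\in\bR_{\ga}(\om)$ (tested against $\C^{\infty}_\emptyset(\om)$) is equivalent to the condition characterising $\R_{\ga}(\om)$ in Lemma \ref{lem:densefullbc1} (tested against $\bR(\om)$), giving $\bR_{\ga}(\om)=\R_{\ga}(\om)$.

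Concretely, for the $\rot$ case I would take $\Psi\in\bR_{\ga}(\om)$, so that $\scp{\Psi}{\rot\Theta}_{\L^2(\om)}=\scp{\rot\Psi}{\Theta}_{\L^2(\om)}$ holds for all $\Theta\in\C^{\infty}_\emptyset(\om)$ (recalling $\gan=\emptyset$ when $\gat=\ga$). Given an arbitrary $\Theta\in\bR(\om)$, Lemma \ref{lem:densenobc} yields a sequence $\Theta_j\in\C^{\infty}_\emptyset(\om)$ with $\Theta_j\to\Theta$ in $\bR(\om)$, i.e.\ $\Theta_j\to\Theta$ and $\rot\Theta_j\to\rot\Theta$ in $\L^2(\om)$; passing to the limit in the identity for $\Theta_j$ and using continuity of the $\L^2$-inner product gives the same identity for $\Theta$. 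Thus $\Psi$ satisfies the full-test-space condition of Lemma \ref{lem:densefullbc1}, so $\Psi\in\R_{\ga}(\om)$. The $\D$ and $\H^1$ cases follow identically, and the general $\H^k$ case follows by the same density argument iterated over multi-indices $|\alpha|\le k$.

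The main obstacle is purely a bookkeeping one: ensuring that the two weak formulations in play—the one defining $\bR_{\ga}(\om)$ (test fields in $\C^{\infty}_{\gan}(\om)$) and the one characterising $\R_{\ga}(\om)$ in Lemma \ref{lem:densefullbc1} (test fields in the full space $\bR(\om)$)—really are the same condition once $\gat=\ga$ forces $\gan=\emptyset$, so that $\C^{\infty}_{\gan}(\om)=\C^{\infty}_\emptyset(\om)$ is the dense subset supplied by the segment property. Everything else is a direct appeal to the density from Lemma \ref{lem:densenobc} together with continuity of the bilinear pairings, which is where the segment property enters essentially; without it, smooth fields need not be dense and the density step, and hence the equality, can fail.
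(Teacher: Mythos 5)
Your proposal is correct and follows essentially the same route as the paper: the authors' (very terse) proof likewise combines the full-test-space characterisation underlying Lemma \ref{lem:densefullbc1} with the density of $\C^{\infty}_{\emptyset}(\om)$ in $\bH^{k}(\om)$, $\bR(\om)$, $\bD(\om)$ from Lemma \ref{lem:densenobc}, which is exactly your passage from smooth test fields to the full test spaces. Your write-up merely makes explicit the limiting argument and the trivial inclusion that the paper leaves implicit.
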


\begin{proof}
This follows by the same technique used in the proof of Lemma \ref{lem:densefullbc1}
in combination with the density results from Lemma \ref{lem:densenobc}, e.g.,
$\bR(\om)=\R(\om)=\R_{\emptyset}(\om)=\ol{\C^{\infty}_{\emptyset}(\om)}^{\R(\om)}$.
\end{proof}

For mixed boundary conditions, i.e., $\emptyset\neq\gat\neq\ga$,
the question `weak equals strong' is more delicate.  The equality can be proved under the assumption that  $\Omega$ has a Lipschitz boundary in the weak sense  and $\Gamma_t$ has a relative boundary in $\Gamma$ which is also Lipschitz in the weak sense. In particular, $\Gamma$ is a Lipschitz manifold of codimension one in $\mathbb{R}^3$ and the relative boundary of $\Gamma_t$ in $\Gamma$ is a Lipschitz submanifold  of codimension one in $\Gamma$. In this case we say that $(\om,\gat)$ is a weak Lipschitz pair. 
Recall that usually ``Lipschitz in the weak sense" means that the open set can be locally flattened near the boundary by means of a Lipschitz diffeomorphism. This  condition is weaker than  ``Lipschitz in the strong sense'' in which case the open set can be locally represented near boundary as a subgraph of a Lipschitz function.

A proof of the following lemma and the precise definition of a weak Lipschitz pair 
can be found in \cite{BPS2016a} or \cite{PS2022a}.

\begin{lem}[weak equals strong for mixed boundary consitions]
\label{lem:densebc}
Let $(\om,\gat)$ be additionally a weak Lipschitz pair. Then
$$\H^{k}_{\gat}(\om)=\bH^{k}_{\gat}(\om),\qquad
\R_{\gat}(\om)=\bR_{\gat}(\om),\qquad
\D_{\gat}(\om)=\bD_{\gat}(\om).$$
\end{lem}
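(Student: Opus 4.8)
The plan is to prove the two inclusions separately. The inclusion of the strong spaces into the weak ones is immediate: if $\Psi\in\R_{\gat}(\om)$, choose $\Psi_n\in\C^{\infty}_{\gat}(\om)$ with $\Psi_n\to\Psi$ in $\bR(\om)$; since each $\Psi_n$ vanishes near $\gat$, classical integration by parts yields $\scp{\Psi_n}{\rot\Theta}_{\L^{2}(\om)}=\scp{\rot\Psi_n}{\Theta}_{\L^{2}(\om)}$ for every $\Theta\in\C^{\infty}_{\gan}(\om)$, and passing to the limit shows $\Psi\in\bR_{\gat}(\om)$. The identical argument applies to $\D$ and analogously to $\H^{k}$. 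Hence the whole content of the lemma lies in the reverse inclusions $\bR_{\gat}(\om)\subseteq\R_{\gat}(\om)$, $\bD_{\gat}(\om)\subseteq\D_{\gat}(\om)$, and $\bH^{k}_{\gat}(\om)\subseteq\H^{k}_{\gat}(\om)$.

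For these I would localise. Cover $\omb$ by finitely many open sets $U_1,\dots,U_m$ with a subordinate partition of unity $\{\varphi_j\}$, chosen so that each $U_j$ meeting $\ga$ is of exactly one of three types: it touches only $\gat$, only $\gan$, or it straddles the interface $\ol{\gat}\cap\ol{\gan}$. Since $\sum_j\varphi_j=1$ near $\omb$ and the product rule keeps $\varphi_j\Psi$ in the same weak space with the same boundary behaviour on $\gat$, it suffices to approximate a field supported in a single $U_j$. On interior patches, and on patches meeting only $\gan$, the support of $\varphi_j\Psi$ already has positive distance from $\gat$, so no boundary condition has to be enforced there and plain Friedrichs mollification produces approximants in $\C^{\infty}_{\gat}(\om)$; this is exactly the local density statement of Lemma \ref{lem:densenobc}.

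The substance is in the patches touching $\gat$. Here I would use the weak Lipschitz pair hypothesis to flatten: a bi-Lipschitz chart maps $U_j\cap\om$ onto a model half-cube with $U_j\cap\gat$ becoming part of the bottom face and, in the interface case, the trace of $\ol{\gat}\cap\ol{\gan}$ becoming a Lipschitz graph within that face. By the pullback transformation rules of Theorem \ref{theo:transtheo}, the spaces $\bR$, $\bD$, $\bH^{k}$ together with their weak boundary conditions are invariant under such transformations, so it is enough to carry out the approximation in the model configuration. There one translates the transformed field by a small amount along a fixed direction transversal to the bottom face, so that the shifted support acquires positive distance from the image of $\gat$, and then mollifies at a scale below that distance; the resulting fields lie in $\C^{\infty}_{\gat}$ and converge in the graph norm, because translation is continuous on $\L^{2}$ and commutes with $\rot$, $\div$, and $\na$.

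The main obstacle is precisely the interface case. In the pure-$\gat$ case one may simply translate along the inner normal, but near $\ol{\gat}\cap\ol{\gan}$ the shift must simultaneously push the support off $\gat$ while being allowed to reach $\gan$, where no condition is imposed. The weak Lipschitz pair structure is exactly what guarantees a single cone of admissible directions along which a uniform shift separates the support from $\ol{\gat}$ without leaving $\om$; choosing this direction uniformly over the patch and verifying that the shifted–then–mollified fields still vanish near $\gat$ is the delicate step, and is where the Lipschitz regularity of both $\ga$ and the interface is used. Once the three spaces are settled for $k=1$, the case $\H^{k}_{\gat}(\om)$ with $k\geq2$ follows by induction: one applies the $k=1$ result to $\psi$ together with its derivatives of order up to $k-1$, which belong to the corresponding weak spaces with boundary conditions on $\gat$.
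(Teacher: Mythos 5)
First, a point of comparison: the paper does not prove this lemma at all — it is quoted from the literature, with the proof (and the very definition of a weak Lipschitz pair) deferred to \cite{BPS2016a} and \cite{PS2022a}. Your sketch follows essentially the same strategy as those references: the easy inclusion of the strong space into the weak one by passing to the limit in the integration-by-parts identity, then localisation by a partition of unity, flattening by the bi-Lipschitz charts of the weak Lipschitz pair, and a translate-then-mollify argument in the model half-cube, with the three types of boundary patches (pure $\gat$, pure $\gan$, interface) treated separately. So the route is the correct and standard one.

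However, as written the proposal is a plan rather than a proof, and the one step you yourself flag as delicate — the patches meeting $\ol{\gat}\cap\ol{\gan}$ — is precisely where essentially all of the work in \cite{BPS2016a} is concentrated, and it is left unexecuted. Moreover, the phrase ``a single cone of admissible directions along which a uniform shift separates the support from $\ol{\gat}$ without leaving $\om$'' is misleading for weak Lipschitz pairs: these are defined by bi-Lipschitz charts only, so $\om$ need not satisfy any cone condition and a rigid translation in the physical domain is in general not available. The shift must be performed in the flattened model coordinates, where $\gat$ becomes part of a face of the cube and the interface becomes the graph of a Lipschitz function splitting that face; the admissible shift directions are then dictated by the Lipschitz constant of that interface graph, and the approximants are pulled back afterwards. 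A second, smaller gap: you invoke Theorem \ref{theo:transtheo} for the invariance of the \emph{weak} spaces $\bH^{k}_{\gat}(\om)$, $\bR_{\gat}(\om)$, $\bD_{\gat}(\om)$ under the charts, but that theorem is stated for the strong (closure-defined) spaces; the weak version follows from the adjoint relations $(\tau^{1}_{\Phi})^{*}=\tau^{2}_{\Phi^{-1}}$ and their companions, and this should be made explicit. None of these is a wrong turn, but filling them in is the entire content of the lemma, which is why the paper outsources it to the cited works.
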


\subsection{The Transformation Theorem}

Let $\Phi\in\C^{0,1}(\rt,\rt)$ be such that
$$\Phi:\om\to\Phi(\om)=\omp$$      
is bi-Lipschitz, and regular, i.e.,
$\Phi\in\C^{0,1}(\omb,\ombp)$ and $\Phi^{-1}\in\C^{0,1}(\ombp,\omb)$ with\footnote{Note that the Jacobian determinant of a bi-Lipschitz diffeomorphism has a constant sign on the connected components of the domain, see \cite[Lemma~6.7]{res}, hence it is not restrictive to assume that it is positive almost everywhere.} 
$$J_{\Phi}=\Phi'=(\na\Phi)^{\top},\qquad
{\rm ess}\inf\det J_{\Phi}>0.$$ 
Such regular bi-Lipschitz transformations will be called admissible and we write
$$\Phi\in\cL(\om).$$
For $\Phi\in\cL(\om)$ the inverse and adjunct matrix of $J_{\Phi}$ shall be denoted by
$$J_{\Phi}^{-1},\qquad
\adj J_{\Phi}:=(\det J_{\Phi})J_{\Phi}^{-1},$$
respectively.
We denote the composition with $\Phi$
by tilde, i.e., for any tensor field $\psi$ we define 
$$\wt{\psi}:=\psi\circ\Phi.$$
Moreover, let
$$\gap:=\Phi(\ga),\qquad
\gatp:=\Phi(\gat),\qquad
\ganp:=\Phi(\gan).$$

A proof of the following theorem for  differential forms 
can be found in the appendix of \cite{BPS2019a}. 
Here we focus on the special case used in this paper.

\begin{theo}[transformation theorem]   
\label{theo:transtheo}
Let $u\in\H^{1}_{\gatp}(\omp)$, $E\in\R_{\gatp}(\omp)$, and $H\in\D_{\gatp}(\omp)$.
Then 
\begin{align*}
\tau^{0}_{\Phi}u:=\wt{u}&\in\H^{1}_{\gat}(\om)
&&\text{and}&
\na\tau^{0}_{\Phi}u&=\tau^{1}_{\Phi}\na u,\\
\tau^{1}_{\Phi}E:=J_{\Phi}^{\top}\wt{E}&\in\R_{\gat}(\om)
&&\text{and}&
\rot\tau^{1}_{\Phi}E&=\tau^{2}_{\Phi}\rot E,\\
\tau^{2}_{\Phi}H:=(\adj J_{\Phi})\wt{H}&\in\D_{\gat}(\om)
&&\text{and}&
\div\tau^{2}_{\Phi}H&=\tau^{3}_{\Phi}\div H.
\end{align*}
with $\tau^{3}_{\Phi}f:=(\det J_{\Phi})\tau^{0}_{\Phi}f=(\det J_{\Phi})\wt{f}\in\L^{2}(\om)$
for $f\in\L^{2}(\omp)$
Moroever, 
\begin{align*}
\tau^{0}_{\Phi}:\H^{1}_{\gatp}(\omp)&\to\H^{1}_{\gat}(\om),
&
\tau^{1}_{\Phi}:\R_{\gatp}(\omp)&\to\R_{\gat}(\om),\\
\tau^{3}_{\Phi}:\L^{2}(\omp)&\to\L^{2}(\om),
&
\tau^{2}_{\Phi}:\D_{\gatp}(\omp)&\to\D_{\gat}(\om)
\end{align*}
are topological isomorphisms with norms depending on $\om$
and $J_{\Phi}$. The inverse operators and the $\L^{2}$-adjoints, 
i.e., the Hilbert space adjoints of 
$\tau^{q}_{\Phi}:\L^{2}(\omp)\to\L^{2}(\om)$, are given by
$$(\tau^{q}_{\Phi})^{-1}=\tau^{q}_{\Phi^{-1}},\qquad
(\tau^{0}_{\Phi})^{*}=\tau^{3}_{\Phi^{-1}},\qquad
(\tau^{1}_{\Phi})^{*}=\tau^{2}_{\Phi^{-1}},\qquad
(\tau^{2}_{\Phi})^{*}=\tau^{1}_{\Phi^{-1}},\qquad
(\tau^{3}_{\Phi})^{*}=\tau^{0}_{\Phi^{-1}},$$
respectively.
\end{theo}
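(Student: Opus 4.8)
The plan is to verify the three pullback identities (differential-operator intertwining), establish the mapping properties and boundedness, and finally compute the inverses and adjoints. I would organize the argument by the three degrees $q=0,1,2,3$, treating each row of the table in turn and postponing the boundary-condition bookkeeping until the algebraic identities are in hand.

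First I would prove the intertwining relations on \emph{smooth} compactly supported fields, where everything is a classical chain-rule computation. For $\tau^{0}_{\Phi}u=\wt u$ the identity $\na\wt u=J_{\Phi}^{\top}\,\wt{\na u}$ is just the chain rule, which by definition of $\tau^{1}_{\Phi}$ reads $\na\tau^{0}_{\Phi}u=\tau^{1}_{\Phi}\na u$. For the $\rot$ row I would use the standard covariant-pullback identity $\rot\!\big(J_{\Phi}^{\top}\wt E\big)=(\adj J_{\Phi})\,\wt{\rot E}$, and for the $\div$ row the Piola-transform identity $\div\!\big((\adj J_{\Phi})\wt H\big)=(\det J_{\Phi})\,\wt{\div H}$. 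These are the de Rham pullback formulas; for Lipschitz $\Phi$ the entries of $J_{\Phi}$, $\adj J_{\Phi}$ and $\det J_{\Phi}$ are only in $\L^{\infty}$, so the identities must be read in the distributional sense. I would first check them for $\Phi\in\C^{2}$ (or smooth) and $u,E,H$ smooth, then pass to Lipschitz $\Phi$ by a mollification/approximation argument, noting that both sides depend continuously on $\Phi$ in the relevant topologies.

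\emph{Boundedness and the boundary conditions.} Once the pointwise/distributional identities hold, the $\L^{2}$-boundedness of each $\tau^{q}_{\Phi}$ and of the graph maps follows from $J_{\Phi},\adj J_{\Phi},\det J_{\Phi}\in\L^{\infty}$ together with $\det J_{\Phi}\geq c>0$ (from bi-Lipschitz regularity), via the change-of-variables formula, so the operators are bounded between the graph spaces $\bR(\omp)\to\bR(\om)$ etc. To land in the spaces \emph{with boundary conditions} $\R_{\gat}(\om)$ and so on, I would argue by density: for $\Phi\in\cL(\om)$ the composition maps $\C^{\infty}_{\gatp}(\omp)$ into (the closure of) $\C^{0,1}_{\gat}(\om)$, since $\Phi$ is bi-Lipschitz and $\Phi(\gat)=\gatp$, so $\dist(\supp\psi,\gatp)>0$ transfers to $\dist(\supp\wt\psi,\gat)>0$; combining this with the strong definitions $\R_{\gat}(\om)=\ol{\C^{0,1}_{\gat}(\om)}^{\bR(\om)}$ and the continuity of $\tau^{1}_{\Phi}$ gives $\tau^{1}_{\Phi}\R_{\gatp}(\omp)\subseteq\R_{\gat}(\om)$, and similarly for the other degrees.

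\emph{Isomorphism, inverses, adjoints.} Applying the same construction to $\Phi^{-1}\in\cL(\omp)$ yields operators $\tau^{q}_{\Phi^{-1}}$ in the reverse direction; the identities $\tau^{q}_{\Phi^{-1}}\tau^{q}_{\Phi}=\id$ and $\tau^{q}_{\Phi}\tau^{q}_{\Phi^{-1}}=\id$ then follow from the chain rule for $J$ (e.g.\ $J_{\Phi^{-1}}\circ\Phi=J_{\Phi}^{-1}$ a.e.), which gives the topological isomorphism property and $(\tau^{q}_{\Phi})^{-1}=\tau^{q}_{\Phi^{-1}}$. Finally, the adjoint formulas come from the change-of-variables identity: for $f\in\L^{2}(\omp)$ and $g\in\L^{2}(\om)$ one computes $\scp{\tau^{0}_{\Phi}f}{g}_{\L^{2}(\om)}=\int_{\om}\wt f\,g=\int_{\omp}f\,(\det J_{\Phi^{-1}})^{-1}\wt{g}\,$ under $\Phi^{-1}$, and recognizing the weight $(\det J_{\Phi})\circ\Phi^{-1}=\det J_{\Phi^{-1}}^{-1}$ identifies the adjoint as $\tau^{3}_{\Phi^{-1}}$; the remaining pairs $(\tau^{1})^{*}=\tau^{2}_{\Phi^{-1}}$, $(\tau^{2})^{*}=\tau^{1}_{\Phi^{-1}}$, $(\tau^{3})^{*}=\tau^{0}_{\Phi^{-1}}$ follow from the analogous weighted change of variables together with the algebraic relations $J_{\Phi}^{\top}\adj J_{\Phi}=(\det J_{\Phi})\id$. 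I expect the main obstacle to be the \emph{low regularity}: establishing the $\rot$- and $\div$-pullback identities in the distributional sense for merely Lipschitz $\Phi$ (where $\na J_{\Phi}$ does not exist classically) requires a careful approximation argument, and transferring the mixed boundary conditions through $\Phi$ relies essentially on the strong (closure) characterization of $\R_{\gat},\D_{\gat}$ rather than on trace theorems, which is exactly why the statement is phrased with the strong spaces.
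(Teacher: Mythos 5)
Your overall architecture --- prove the intertwining identities on a dense class of Lipschitz fields, transfer the boundary conditions through the support condition and the closure definitions of $\R_{\gat}(\om)$, $\D_{\gat}(\om)$, obtain the isomorphism from $\tau^{q}_{\Phi^{-1}}$, and read off the adjoints from the change-of-variables formula --- coincides with the paper's, and those parts are fine. The one place where you genuinely diverge is the step you yourself flag as the main obstacle: establishing the $\rot$- and $\div$-pullback identities for merely Lipschitz $\Phi$. You propose to first take $\Phi$ smooth and then mollify $\Phi$; the paper never differentiates $J_{\Phi}$ at all. Instead, for $E\in\C^{0,1}_{\gatp}(\omp)$ it writes $J_{\Phi}^{\top}\wt{E}=\sum_{n}\wt{E}_{n}\na\Phi_{n}$ and uses that each $\na\Phi_{n}$ is the gradient of an $\H^{1}$-function, hence curl-free, so the product rule $\rot(fV)=\na f\times V$ applies using only first derivatives of $\Phi$; likewise $(\adj J_{\Phi})\wt{H}=\sum_{(n,m,l)}\wt{H}_{n}\,\na\Phi_{m}\times\na\Phi_{l}$ with $\na\Phi_{m}\times\na\Phi_{l}=\rot(\Phi_{m}\na\Phi_{l})$ divergence-free. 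After that, only the \emph{fields} are approximated, and closedness of $\rot_{\gat}$ and $\div_{\gat}$ finishes the argument. What the paper's trick buys is that one never has to verify that the smoothed maps $\Phi_{\delta}$ remain admissible, that $E\circ\Phi_{\delta}$ is well defined (note $\Phi_{\delta}(\om)\not\subset\omp$ in general, so a global extension is needed), or that $(\rot E)\circ\Phi_{\delta}\to(\rot E)\circ\Phi$ --- the last of which is not automatic when $\rot E$ is merely $\L^{\infty}$. Your route can be completed (take $E$ smooth in the first step so that $\rot E$ is continuous, use a.e.\ convergence and uniform boundedness of $\na\Phi_{\delta}$, and invoke closedness of the operators), but these are exactly the details your sketch leaves open, whereas the paper's decomposition into gradients and cross products of gradients sidesteps them entirely; the paper also records a purely dual argument for the divergence row. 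A minor slip: in your adjoint computation the intermediate weight should be $\det J_{\Phi^{-1}}$ rather than its inverse, though your final identification $(\tau^{0}_{\Phi})^{*}=\tau^{3}_{\Phi^{-1}}$ is correct.
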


\begin{proof} 
If $u\in\C^{0,1}_{\gatp}(\omp)$
we have by Rademacher's theorem 
$\wt{u}\in\C^{0,1}_{\gat}(\om)$
and the standard chain rule $(\wt{u})'=\wt{u'}\Phi'$ holds, i.e.,
\begin{align}
\label{transtheo:gradformulaone}
\na\wt{u}=\na\Phi\wt{\na u}=J_{\Phi}^{\top}\wt{\na u}.
\end{align}
Then we use an approximation argument.
For $u\in\H^{1}_{\gatp}(\omp)$ we pick a sequence $(u^{\ell})\subset\C^{0,1}_{\gatp}(\omp)$ 
such that $u^{\ell}\to E$ in $\H^{1}_{\gatp}(\omp)$. 
Then $\wt{u^{\ell}}\to\wt{E}$ and $\wt{\na u^{\ell}}\to\wt{\na u}$ in $\L^{2}(\om)$
by the standard transformation theorem. By \eqref{transtheo:gradformulaone} we have
$\wt{u^{\ell}}\in\C^{0,1}_{\gat}(\om)\subset\H^{1}_{\gat}(\om)$ with
$$\wt{u^{\ell}}\to\wt{u},\quad
\na\wt{u^{\ell}}=J_{\Phi}^{\top}\wt{\na u^{\ell}}\to J_{\Phi}^{\top}\wt{\na u}\qquad
\text{in }\L^{2}(\om).$$
Since $\na_{\gat}:\H^{1}_{\gat}(\om)\subset\L^{2}(\om)\to\L^{2}(\om)$ is closed,
we conclude $\wt{u}\in\H^{1}_{\gat}(\om)$ and 
\begin{align}
\label{transtheo:gradformulatwo}
\na\wt{u}=J_{\Phi}^{\top}\wt{\na u}.
\end{align}
For the classical chain rule in Sobolev spaces see, e.g., \cite{LLdC2004a}.

Assume $E\in\C^{0,1}_{\gatp}(\omp)$. 
Then $\wt{E}\in\C^{0,1}_{\gat}(\om)$ and
$$J_{\Phi}^{\top}\wt{E}=\na\Phi\wt{E}
=[\na\Phi_{1}\;\na\Phi_{2}\;\na\Phi_{3}]\wt{E}
=\sum_{j}\wt{E}_{j}\na\Phi_{j}.$$
As $\na\Phi_{j}\in\na\H^{1}(\om)\subset\R(\om)$ we conclude
$J_{\Phi}^{\top}\wt{E}\in\R(\om)$ and 
\begin{align}
\nonumber
\rot(J_{\Phi}^{\top}\wt{E})
&=\sum_{j}\na\wt{E}_{j}\times\na\Phi_{j}
=\sum_{j}(J_{\Phi}^{\top}\wt{\na E_{j}})\times\na\Phi_{j}\\
\label{transtheo:rotformulaLipschitz}
&=\sum_{j}\big([\na\Phi_{1}\;\na\Phi_{2}\;\na\Phi_{3}]\,\wt{\na E_{j}}\big)\times\na\Phi_{j}\\
\nonumber
&=\sum_{j,m}\wt{\p_{m}E_{j}}\na\Phi_{m}\times\na\Phi_{j}
=\sum_{j<m}(\wt{\p_{m}E_{j}}-\wt{\p_{j}E_{m}})\na\Phi_{m}\times\na\Phi_{j}\\
\nonumber
&=[\na\Phi_{2}\times\na\Phi_{3}\quad\na\Phi_{3}\times\na\Phi_{1}\quad\na\Phi_{1}\times\na\Phi_{2}]\,\wt{\rot E}
=(\adj J_{\Phi})\,\wt{\rot E}.
\end{align}
Moreover, by a mollification argument it follows that $J_{\Phi}^{\top}\wt{E}\in\R_{\gat}(\om)$.
Again, the general case $E\in\R_{\gatp}(\omp)$ can be treated by an approximation argument. 
For this, we consider a sequence $(E^{\ell})_{l\in \mathbb{N}}\subset\C^{0,1}_{\gatp}(\omp)$ 
such that $E^{\ell}\to E$ in $\R(\omp)$. 
Then $\wt{E^{\ell}}\to\wt{E}$ and $\wt{\rot E^{\ell}}\to\wt{\rot E}$ in $\L^{2}(\om)$.
Hence by the previous argument it follows that 
$J_{\Phi}^{\top}\wt{E^{\ell}}\in\R_{\gat}(\om)$ with
$$J_{\Phi}^{\top}\wt{E^{\ell}}\to J_{\Phi}^{\top}\wt{E},\quad
\rot(J_{\Phi}^{\top}\wt{E^{\ell}})=(\adj J_{\Phi})\,\wt{\rot E^{\ell}}\to(\adj J_{\Phi})\,\wt{\rot E}\qquad
\text{in }\L^{2}(\om).$$
Since $\rot_{\gat}:\R_{\gat}(\om)\subset\L^{2}(\om)\to\L^{2}(\om)$ is a closed operator,
we conclude $J_{\Phi}^{\top}\wt{E}\in\R_{\gat}(\om)$ and 
\begin{align}
\label{transtheo:rotformula}
\rot(J_{\Phi}^{\top}\wt{E})=(\adj J_{\Phi})\,\wt{\rot E},
\end{align}
which completes the proof of the transformation rule for $\tau^{1}_{\Phi}$. 

We now consider the case of $\tau^{2}_{\Phi}$.  
Assume $H\in\C^{0,1}_{\gatp}(\omp)$. 
Then $\wt{H}\in\C^{0,1}_{\gat}(\om)$ and
\begin{align*}
(\adj J_{\Phi})\wt{H}
&=[\na\Phi_{2}\times\na\Phi_{3}\quad\na\Phi_{3}\times\na\Phi_{1}\quad\na\Phi_{1}\times\na\Phi_{2}]\wt{H}
=\sum_{(j,m,l)}\wt{H}_{j}\na\Phi_{m}\times\na\Phi_{l},
\end{align*}
cf.~\eqref{transtheo:rotformulaLipschitz},
where the summation is over the three even permutations $(j,m,l)$ of $(1,2,3)$.
Since $\na\Phi_{m}\times\na\Phi_{l}=\rot(\Phi_{m}\na\Phi_{l})\in\rot\R(\om)\subset\D(\om)$ we conclude that
$(\adj J_{\Phi})\wt{H}\in\D(\om)$ and
\begin{align}
\nonumber
\div\big((\adj J_{\Phi})\wt{H}\big)
&=\sum_{(j,m,l)}\na\wt{H}_{n}\cdot(\na\Phi_{m}\times\na\Phi_{l})
=\sum_{(j,m,l)}(J_{\Phi}^{\top}\wt{\na H_{j}})\cdot(\na\Phi_{m}\times\na\Phi_{l})\\
\label{transtheo:divformulaLipschitz}
&=\sum_{(j,m,l)}\big([\na\Phi_{1}\;\na\Phi_{2}\;\na\Phi_{3}]\,\wt{\na H_{j}}\big)\cdot(\na\Phi_{m}\times\na\Phi_{l})\\
\nonumber
&=\sum_{(j,m,l,k)}\wt{\p_{k}H_{j}}\na\Phi_{k}\cdot(\na\Phi_{m}\times\na\Phi_{l})\\
\nonumber
&\overset{k=j}{=}(\det\na\Phi)\,\wt{\div H}
=(\det J_{\Phi})\,\wt{\div H}.
\end{align}
Moreover, by a mollification  argument we deduce that 
$(\adj J_{\Phi})\wt{H}\in\D_{\gat}(\om)$.
The general case $H\in\D_{\gatp}(\omp)$ can be discussed by an approximation argument as above. 
Consider a sequence $(H^{\ell})_{l\in \mathbb{N}}\subset\C^{0,1}_{\gatp}(\omp)$ 
such that $H^{\ell}\to H$ in $\D(\omp)$. 
Then $\wt{H^{\ell}}\to\wt{H}$ and $\wt{\div H^{\ell}}\to\wt{\div H}$ in $\L^{2}(\om)$. 
Hence by the previous argument we know that 
$(\adj J_{\Phi})\wt{H^{\ell}}\in\D_{\gat}(\om)$ with
$(\adj J_{\Phi})\wt{H^{\ell}}\to(\adj J_{\Phi})\wt{H}$ and 
$\div\big((\adj J_{\Phi})\wt{H^{\ell}}\big)=(\det J_{\Phi})\wt{\div H^{\ell}}\to(\det J_{\Phi})\wt{\div H}$
in $\L^{2}(\om)$.
Since $\div_{\gat}:\D_{\gat}(\om)\subset\L^{2}(\om)\to\L^{2}(\om)$ is a closed operator,
we conclude that $(\adj J_{\Phi})\wt{H}\in\D_{\gat}(\om)$ and 
$$\div\big((\adj J_{\Phi})\wt{H}\big)=(\det J_{\Phi})\wt{\div H},$$
which completes the proof of the transformation rule for  $\tau^{2}_{\Phi}$. 

Concerning the inverse operators and $\L^{2}$-adjoints we consider, e.g., 
the case $q=1$ since the other cases can be discussed in a similar way. 
As
$$\tau^{1}_{\Phi^{-1}}\tau^{1}_{\Phi}E
=\tau^{1}_{\Phi^{-1}}J_{\Phi}^{\top}\wt{E}
=J_{\Phi^{-1}}^{\top}\big((J_{\Phi}^{\top}\wt{E})\circ\Phi^{-1}\big)
=\big(J_{\Phi}^{-\top}J_{\Phi}^{\top}\wt{E})\circ\Phi^{-1}
=E$$
we have $(\tau^{1}_{\Phi})^{-1}=\tau^{1}_{\Phi^{-1}}$. 
Moreover, observing that $J_{\Phi^{-1}}=J_{\Phi}^{-1}\circ\Phi^{-1}$ we get
\begin{align*}
\scp{\tau^{1}_{\Phi}E}{\Psi}_{\L^{2}(\om)}
=\scp{J_{\Phi}^{\top}\wt{E}}{\Psi}_{\L^{2}(\om)}
&=\bscp{E}{(\det J_{\Phi^{-1}})(J_{\Phi}\Psi)\circ\Phi^{-1}}_{\L^{2}(\omp)}\\
&=\bscp{E}{(\det J_{\Phi^{-1}})J_{\Phi^{-1}}^{-1}(\Psi\circ\Phi^{-1})}_{\L^{2}(\omp)}\\
&=\bscp{E}{(\adj J_{\Phi^{-1}})(\Psi\circ\Phi^{-1})}_{\L^{2}(\omp)}
=\scp{E}{\tau^{2}_{\Phi^{-1}}\Psi}_{\L^{2}(\omp)}
\end{align*}
and hence $(\tau^{1}_{\Phi})^{*}=\tau^{2}_{\Phi^{-1}}$.
\end{proof}

\begin{rem}[transformation theorem]
\label{transtheo:rem2}
For the divergence there is also a duality argument 
leading to the result of Theorem \ref{theo:transtheo}.
For this, let $H\in\D(\omp)$ and pick some $\psi\in\C_{\ga}^{0,1}(\om)$.
Then
$\phi:=\psi\circ\Phi^{-1}\in\C_{\ga , \Phi}^{0,1}(\omp)$ and $\wt{\phi}=\psi$.
By the chain rule we compute
\begin{align*}
&\qquad\scp{H}{\na\phi}_{\L^{2}(\omp)}
=-\scp{\div H}{\phi}_{\L^{2}(\omp)}
=-\bscp{(\det J_{\Phi})\wt{\div H}}{\psi}_{\L^{2}(\om)}\\
&=\bscp{(\det J_{\Phi})\wt{H}}{\wt{\na\phi}}_{\L^{2}(\om)}
=\bscp{(\det J_{\Phi})\wt{H}}{J_{\Phi}^{-\top}\na\wt{\phi}}_{\L^{2}(\om)}
=\bscp{(\adj J_{\Phi})\wt{H}}{\na\psi}_{\L^{2}(\om)}.
\end{align*}
Hence, $(\adj J_{\Phi})\wt{H}\in\D(\om)$ and
$\div\big((\adj J_{\Phi})\wt{H}\big)=(\det J_{\Phi})\wt{\div H}$.
Note that this duality argument does not apply for the $\rot$ operator.
\end{rem}

\begin{cor}[transformation theorem]
\label{cor:transtheo}
Let $E\in\R_{\gatp}(\omp)\cap\eps^{-1}\D_{\ganp}(\omp)$. Then 
$$\tau^{1}_{\Phi}E\in\R_{\gat}(\om)\cap\eps_{\Phi}^{-1}\D_{\gan}(\om)$$
and it holds
$$\rot\tau^{1}_{\Phi}E=\tau^{2}_{\Phi}\rot E,\qquad
\div\eps_{\Phi}\tau^{1}_{\Phi}E=\tau^{3}_{\Phi}\div\eps E,\qquad
\eps_{\Phi}\tau^{1}_{\Phi}
=\tau^{2}_{\Phi}\eps$$
with 
$\eps_{\Phi}
:=\tau^{2}_{\Phi}\eps\tau^{1}_{\Phi^{-1}}
=(\det J_{\Phi})J_{\Phi}^{-1}\wt{\eps}J_{\Phi}^{-\top}
=(\adj J_{\Phi})\wt{\eps}J_{\Phi}^{-\top}$.
Moroever, 
$$\tau^{1}_{\Phi}:\R_{\gatp}(\omp)\cap\eps^{-1}\D_{\ganp}(\omp)
\to\R_{\gat}(\om)\cap\eps_{\Phi}^{-1}\D_{\gan}(\om)$$
is a topological isomorphism with norm depending on $\om$, $\eps$,
and $J_{\Phi}$. The inverse is given by $\tau^{1}_{\Phi^{-1}}$.
\end{cor}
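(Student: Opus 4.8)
The plan is to reduce the whole statement to the transformation theorem (Theorem \ref{theo:transtheo}), which already provides the two operators $\tau^{1}_{\Phi}$ and $\tau^{2}_{\Phi}$ together with their intertwining relations $\rot\tau^{1}_{\Phi}=\tau^{2}_{\Phi}\rot$ and $\div\tau^{2}_{\Phi}=\tau^{3}_{\Phi}\div$ and the inverse/adjoint formulas. The only genuinely new content is the coupling through the coefficient $\eps$, and the whole argument hinges on the operator identity $\eps_{\Phi}\tau^{1}_{\Phi}=\tau^{2}_{\Phi}\eps$.

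First I would establish this identity together with the explicit matrix form of $\eps_{\Phi}$. By the defining relation $\eps_{\Phi}:=\tau^{2}_{\Phi}\eps\tau^{1}_{\Phi^{-1}}$ and the inverse relation $(\tau^{1}_{\Phi})^{-1}=\tau^{1}_{\Phi^{-1}}$ from Theorem \ref{theo:transtheo}, one immediately obtains $\eps_{\Phi}\tau^{1}_{\Phi}=\tau^{2}_{\Phi}\eps\tau^{1}_{\Phi^{-1}}\tau^{1}_{\Phi}=\tau^{2}_{\Phi}\eps$. To read off the explicit form I would insert $\tau^{1}_{\Phi^{-1}}G=J_{\Phi^{-1}}^{\top}(G\circ\Phi^{-1})$ and $\tau^{2}_{\Phi}H=(\adj J_{\Phi})\wt{H}$ and use the chain-rule identity $J_{\Phi^{-1}}\circ\Phi=J_{\Phi}^{-1}$, which yields $\eps_{\Phi}=(\adj J_{\Phi})\wt{\eps}J_{\Phi}^{-\top}=(\det J_{\Phi})J_{\Phi}^{-1}\wt{\eps}J_{\Phi}^{-\top}$ as claimed. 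I would also note in passing that $\eps_{\Phi}$ inherits symmetry, $\L^{\infty}$-boundedness, and positivity from $\eps$, since it is a congruence transformation of the admissible field $\wt{\eps}$ by $J_{\Phi}^{-\top}$ scaled by the positive factor $\det J_{\Phi}$; this guarantees that the target space $\eps_{\Phi}^{-1}\D_{\gan}(\om)$ is meaningful.

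Next I would treat the two memberships separately. The $\rot$-part is immediate: since $E\in\R_{\gatp}(\omp)$, Theorem \ref{theo:transtheo} directly gives $\tau^{1}_{\Phi}E\in\R_{\gat}(\om)$ and $\rot\tau^{1}_{\Phi}E=\tau^{2}_{\Phi}\rot E$. For the $\div$-part I would use the identity from the first step in the form $\eps_{\Phi}\tau^{1}_{\Phi}E=\tau^{2}_{\Phi}(\eps E)$. By hypothesis $\eps E\in\D_{\ganp}(\omp)$, so applying Theorem \ref{theo:transtheo} with the boundary part $\gan$ in place of $\gat$ (the theorem holds verbatim for any admissible decomposition) gives $\tau^{2}_{\Phi}(\eps E)\in\D_{\gan}(\om)$ and $\div\tau^{2}_{\Phi}(\eps E)=\tau^{3}_{\Phi}\div\eps E$, that is, $\eps_{\Phi}\tau^{1}_{\Phi}E\in\D_{\gan}(\om)$ with $\div\eps_{\Phi}\tau^{1}_{\Phi}E=\tau^{3}_{\Phi}\div\eps E$. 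The point requiring care here, and the place where the argument is least automatic, is the bookkeeping of the two boundary parts: the $\rot$-condition sits on $\gatp$ while the $\eps$-weighted $\div$-condition sits on the complementary part $\ganp$, so the two invocations of the transformation theorem use the decomposition in opposite roles.

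Finally, for the isomorphism statement I would equip the intersection space with the natural graph norm built from $\norm{E}_{\L^{2}(\omp)}$, $\norm{\rot E}_{\L^{2}(\omp)}$, and $\norm{\div\eps E}_{\L^{2}(\omp)}$. Boundedness of $\tau^{1}_{\Phi}$ in this norm follows by combining the operator-norm bounds for $\tau^{1}_{\Phi}$, $\tau^{2}_{\Phi}$, $\tau^{3}_{\Phi}$ from Theorem \ref{theo:transtheo} with the $\L^{\infty}$-bounds on $\eps$, $\eps_{\Phi}$, and $J_{\Phi}$, so the constants depend only on $\om$, $\eps$, and $J_{\Phi}$. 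Running the same argument with $\Phi^{-1}$ in place of $\Phi$ shows that $\tau^{1}_{\Phi^{-1}}$ maps $\R_{\gat}(\om)\cap\eps_{\Phi}^{-1}\D_{\gan}(\om)$ boundedly back into $\R_{\gatp}(\omp)\cap\eps^{-1}\D_{\ganp}(\omp)$; since $\tau^{1}_{\Phi^{-1}}\tau^{1}_{\Phi}=\id$ and $\tau^{1}_{\Phi}\tau^{1}_{\Phi^{-1}}=\id$ already hold on the level of the $\R$-spaces by Theorem \ref{theo:transtheo}, these restrictions are mutually inverse, which completes the proof that $\tau^{1}_{\Phi}$ is a topological isomorphism with inverse $\tau^{1}_{\Phi^{-1}}$.
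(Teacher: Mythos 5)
Your proposal is correct and follows essentially the same route as the paper's own (very condensed) proof: both rest on the identity $\eps_{\Phi}\tau^{1}_{\Phi}=\tau^{2}_{\Phi}\eps$ obtained from $\tau^{1}_{\Phi^{-1}}\tau^{1}_{\Phi}=\id$, followed by applying Theorem \ref{theo:transtheo} to $\eps E\in\D_{\ganp}(\omp)$ and computing the explicit form of $\eps_{\Phi}$. Your additional remarks on the $\gat$/$\gan$ bookkeeping, the admissibility of $\eps_{\Phi}$, and the graph-norm argument for the isomorphism are all correct details the paper leaves implicit.
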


\begin{proof}
Using Theorem \ref{theo:transtheo} we compute for $\eps E\in\D_{\ganp}(\omp)$
$$\tau^{3}_{\Phi}\div\eps E
=\div\tau^{2}_{\Phi}\eps E
=\div\tau^{2}_{\Phi}\eps\tau^{1}_{\Phi^{-1}}\tau^{1}_{\Phi}E
=\div\eps_{\Phi}\tau^{1}_{\Phi}E$$
with 
$\eps_{\Phi}
=\tau^{2}_{\Phi}\eps\tau^{1}_{\Phi^{-1}}
=(\adj J_{\Phi})\wt{\eps\tau^{1}_{\Phi^{-1}}}
=(\adj J_{\Phi})\wt{\eps}J_{\Phi}^{-\top}
=(\det J_{\Phi})J_{\Phi}^{-1}\wt{\eps}J_{\Phi}^{-\top}$.
\end{proof}

\begin{rem}[transformation theorem]
\label{rem:transtheo1}
More explicitly,
in Theorem \ref{theo:transtheo} and Corollary \ref{cor:transtheo} it holds
\begin{align*}
\forall\,u&\in\H^{1}_{\gatp}(\omp)
&
\na\wt{u}
&=J_{\Phi}^{\top}\wt{\na u},\\
\forall\,E&\in\R_{\gatp}(\omp)
&
\rot(J_{\Phi}^{\top}\wt{E})
&=(\adj J_{\Phi})\,\wt{\rot E},\\
\forall\,H&\in\D_{\gatp}(\omp)
&
\div\big((\adj J_{\Phi})\,\wt{H}\big)
&=(\det J_{\Phi})\,\wt{\div H},\\
\forall\,E&\in\eps^{-1}\D_{\ganp}(\omp)
&
\div(\eps_{\Phi}J_{\Phi}^{\top}\wt{E})
&=(\det J_{\Phi})\,\wt{\div\eps E}.
\end{align*}
\end{rem}

\begin{rem}[transformation theorem]
\label{rem:transtheo2}
The transformations $\tau^{q}_{\Phi}$ are just 
the well known pullback maps for differential $q$-forms
applied to the corresponding vector proxies, i.e.,
$\tau^{q}_{\Phi}=\Phi^{*}$ on differentials forms $F$ of degree $q$.
Using the exterior derivative $\ed$ the latter formulas reduce to
$$\ed\tau^{q}_{\Phi}F=\ed\Phi^{*}F=\Phi^{*}\ed F=\tau^{q+1}_{\Phi}\ed F.$$
\end{rem}

\subsection{Functional Analysis Toolbox}

We collect and cite some parts from
\cite{P2017a,P2019b,P2019a,P2020a,PS2022a,PW2020a},
cf.~\cite{PS2022b,PS2022d,PW2021a,PZ2020a,PZ2022a},
of the so-called functional analysis toolbox (FA-ToolBox).

\subsubsection{Single Operators and Hilbert Space Adjoints}
\label{singleoperators}

Let $\A:D(\A)\subset\H_{0}\to\H_{1}$ be a densely defined and closed 
(unbounded\footnote{The related bounded linear operator, where the domain  $D(\A)$ is endowed with the graph norm,
shall be denoted by $\A:D(\A)\to\H_{1}$.}) 
linear operator
with domain of definition $D(\A)$ on two Hilbert spaces $\H_{0}$ and $\H_{1}$. 
Then the Hilbert space adjoint $\A^{*}:D(\A^{*})\subset\H_{1}\to\H_{0}$
is well defined and characterised by
$$\forall\,x\in D(\A)\quad
\forall\,y\in D(\A^{*})\qquad
\scp{\A x}{y}_{\H_{1}}=\scp{x}{\A^{*} y}_{\H_{0}}.$$
The operators $\A$ and $\A^{*}$ are both densely defined, closed, and typically unbounded.
We call $(\A,\A^{*})$ a dual pair as $(\A^{*})^{*}=\ol{\A}=\A$.
The projection theorem shows
\begin{align}
\label{helm}
\H_{0}=N(\A)\oplus_{\H_{0}}\ol{R(\A^{*})},\quad
\H_{1}=N(\A^{*})\oplus_{\H_{1}}\ol{R(\A)},
\end{align}
often called Helmholtz/Hodge/Weyl decompositions,
where we introduce the notation $N$ for the kernel (or null space)
and $R$ for the range of a linear operator.
These orthogonal decompositions reduce the operators $\A$ and $\A^{*}$,
leading to the injective operators $\cA:=\A|_{\ol{R(\A^{*})}}$ and $\cA^{*}:=\A^{*}|_{\ol{R(\A)}}$, i.e.
\begin{align*}
\cA:D(\cA)\subset\ol{R(\A^{*})}&\to\ol{R(\A)},
&
D(\cA)&=D(\cA)\cap\ol{R(\A^{*})},\\
\cA^{*}:D(\cA^{*})\subset\ol{R(\A)}&\to\ol{R(\A^{*})},
&
D(\cA^{*})&=D(\cA^{*})\cap\ol{R(\A)},
\end{align*}
which are again densely defined and closed (unbounded) linear operators.
Note that 
$$\ol{R(\A^{*})}=N(\A)^{\bot_{\H_{0}}},\quad
\ol{R(\A)}=N(\A^{*})^{\bot_{\H_{1}}}$$
and that $\cA$ and $\cA^{*}$ are indeed adjoint to each other, i.e.,
$(\cA,\cA^{*})$ is a dual pair as well. Then the inverse operators 
$$\cA^{-1}:R(\A)\to D(\cA),\quad
(\cA^{*})^{-1}:R(\A^{*})\to D(\cA^{*})$$
are well defined and bijective, but possibly unbounded.
Furthermore, by \eqref{helm} we have the refined Helmholtz type decompositions
\begin{align}
\label{helmdecoAcA}
D(\A)=N(\A)\oplus_{\H_{0}}D(\cA),\quad
D(\A^{*})=N(\A^{*})\oplus_{\H_{1}}D(\cA^{*})
\end{align}
and thus we obtain for the ranges
$$R(\A)=R(\cA),\quad
R(\A^{*})=R(\cA^{*}).$$
Note that $D(\A),\,D(\cA)$ and $D(\A^{*}),\,D(\cA^{*})$ equipped with the 
respective graph norms are Hilbert spaces.

The following result is a well known and direct consequence of
the closed graph theorem and the closed range theorem.

\begin{lem}[fa-toolbox lemma 1]
\label{fatbl1}
The following assertions are equivalent:
\begin{itemize}
\item[\bf(i)] 
$\exists\,c_{\A}\in(0,\infty)$ \quad 
$\forall\,x\in D(\cA)$ \qquad
$\norm{x}_{\H_{0}}\leq c_{\A}\norm{\A x}_{\H_{1}}$
\item[\bf(i${}^{*}$)] 
$\exists\,c_{\A^{*}}\in(0,\infty)$ \quad 
$\forall\,y\in D(\cA^{*})$ \qquad
$\norm{y}_{\H_{1}}\leq c_{\A^{*}}\norm{\A^{*} y}_{\H_{0}}$
\item[\bf(ii)] 
$R(\A)=R(\cA)$ is closed in $\H_{1}$.
\item[\bf(ii${}^{*}$)] 
$R(\A^{*})=R(\cA^{*})$ is closed in $\H_{0}$.
\item[\bf(iii)] 
$\cA^{-1}:R(\A)\to D(\cA)$ is bounded.
\item[\bf(iii${}^{*}$)] 
$(\cA^{*})^{-1}:R(\A^{*})\to D(\cA^{*})$ is bounded.
\item[\bf(iv)] 
$\cA:D(\cA)\to R(\A)$ is a topological isomorphism.
\item[\bf(iv${}^{*}$)] 
$\cA^{*}:D(\cA^{*})\to R(\A^{*})$ is a topological isomorphism.
\end{itemize}
\end{lem}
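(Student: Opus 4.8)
The plan is to prove the unstarred chain (i) $\Leftrightarrow$ (ii) $\Leftrightarrow$ (iii) $\Leftrightarrow$ (iv) directly from the definitions together with the bounded inverse theorem, to obtain the starred chain (i$^{*}$) $\Leftrightarrow$ (ii$^{*}$) $\Leftrightarrow$ (iii$^{*}$) $\Leftrightarrow$ (iv$^{*}$) verbatim by applying the identical argument to the dual pair $(\cA,\cA^{*})$ with the roles of $\H_{0}$ and $\H_{1}$ interchanged, and finally to bridge the two chains by the closed range theorem, which delivers (ii) $\Leftrightarrow$ (ii$^{*}$).

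First I would record the structural facts already assembled before the lemma: by construction $\cA:D(\cA)\to R(\A)=R(\cA)$ is a bijection; it is closed, being the restriction of the closed operator $\A$ to the closed reducing subspace $\ol{R(\A^{*})}$; and it is bounded as a map from $D(\cA)$ with its graph norm into $\H_{1}$. With these in hand the equivalence (i) $\Leftrightarrow$ (iii) is a pure reformulation: for $y=\A x$ with $x\in D(\cA)$ one has $\cA^{-1}y=x$ and $\norm{x}_{D(\cA)}^{2}=\norm{x}_{\H_{0}}^{2}+\norm{\A x}_{\H_{1}}^{2}=\norm{x}_{\H_{0}}^{2}+\norm{y}_{\H_{1}}^{2}$, so the estimate in (i) holds with constant $c_{\A}$ if and only if $\cA^{-1}$ is bounded. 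Since $\cA$ is already a bounded bijection, (iii) $\Leftrightarrow$ (iv) is then immediate from the definition of a topological isomorphism.

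Next I would treat the closedness equivalence (ii) $\Leftrightarrow$ (iii). If $\cA^{-1}$ is bounded and $\cA x_{n}=\A x_{n}\to y$ in $\H_{1}$ with $x_{n}\in D(\cA)$, then $(x_{n})$ is Cauchy, hence convergent to some $x\in\ol{R(\A^{*})}$, and closedness of $\cA$ forces $x\in D(\cA)$ with $\cA x=y$, so $y\in R(\A)$ and $R(\A)$ is closed. Conversely, if $R(\A)$ is closed then $R(\A)$ is itself a Hilbert space and $\cA:D(\cA)\to R(\A)$ is a closed bijection between Banach spaces, so the bounded inverse (closed graph) theorem yields boundedness of $\cA^{-1}$. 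This closes the unstarred chain, and running the very same three arguments on the dual pair $(\cA,\cA^{*})$ produces the starred chain without any new work.

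The final step is the link (ii) $\Leftrightarrow$ (ii$^{*}$), which I would obtain from the closed range theorem: for a densely defined closed operator $\A$ the range $R(\A)$ is closed in $\H_{1}$ if and only if $R(\A^{*})$ is closed in $\H_{0}$. Combined with the Helmholtz decompositions \eqref{helm} and the identities $R(\A)=R(\cA)$, $R(\A^{*})=R(\cA^{*})$, this joins the two chains and completes the equivalence of all eight assertions. I expect no conceptual obstacle — the statement is, as announced, a direct consequence of the closed graph and closed range theorems — the only point demanding care being the precise bookkeeping of graph norms versus ambient norms in (i) $\Leftrightarrow$ (iii), and symmetrically in (i$^{*}$) $\Leftrightarrow$ (iii$^{*}$).
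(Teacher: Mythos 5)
Your proof is correct and follows precisely the route the paper indicates: the paper offers no written proof, stating only that the lemma is ``a well known and direct consequence of the closed graph theorem and the closed range theorem,'' and your argument fleshes out exactly that — the unstarred chain via graph-norm bookkeeping and the bounded inverse theorem, the starred chain by symmetry of the dual pair, and the bridge (ii) $\Leftrightarrow$ (ii${}^{*}$) via the closed range theorem. No gaps.
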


The latter inequalities will be called Friedrichs-Poincar\'e type estimates.

\begin{lem}[fa-toolbox lemma 2]
\label{fatbl2}
The following assertions are equivalent:
\begin{itemize}
\item[\bf(i)]
$D(\cA)\incl\H_{0}$ is compact.
\item[\bf(i${}^{*}$)]
$D(\cA^{*})\incl\H_{1}$ is compact.
\item[\bf(ii)]
$\cA^{-1}:R(\A)\to R(\A^{*})$ is compact.
\item[\bf(ii${}^{*}$)]
$(\cA^{*})^{-1}:R(\A^{*})\to R(\A)$ is compact.
\end{itemize}
\end{lem}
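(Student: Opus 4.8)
The plan is to prove the four statements equivalent by first establishing, for each of them separately, that the underlying operator has closed range; once ranges are closed the reduced operators $\cA$ and $\cA^{*}$ become topological isomorphisms onto their ranges, and the remaining work is to relate the compactness of $\cA^{-1}$ and of $(\cA^{*})^{-1}$ through the duality $(\cA^{-1})^{*}=(\cA^{*})^{-1}$ and Schauder's theorem.

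First I would show (i) $\Rightarrow$ (ii). The key preliminary observation is that (i) forces the Friedrichs--Poincar\'e estimate of Lemma \ref{fatbl1}(i), hence closed ranges: if that estimate failed, there would be a sequence $x_{n}\in D(\cA)$ with $\norm{x_{n}}_{\H_{0}}=1$ and $\norm{\A x_{n}}_{\H_{1}}\to0$; this sequence is bounded in the graph norm, so by compactness of $D(\cA)\incl\H_{0}$ a subsequence converges in $\H_{0}$ to some $x$, closedness of $\A$ gives $x\in N(\A)$, while $x\in\ol{R(\A^{*})}=N(\A)^{\bot_{\H_{0}}}$ forces $x=0$, contradicting $\norm{x}_{\H_{0}}=1$. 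With closed ranges, Lemma \ref{fatbl1}(iii) makes $\cA^{-1}\colon R(\A)\to D(\cA)$ bounded, and writing $\cA^{-1}$ as the composition of this bounded map with the compact embedding $D(\cA)\incl\H_{0}$ (whose image lies in $\ol{R(\A^{*})}=R(\A^{*})$) shows $\cA^{-1}\colon R(\A)\to R(\A^{*})$ is compact. Conversely, for (ii) $\Rightarrow$ (i), compactness of $\cA^{-1}$ entails boundedness and hence, via $\norm{\cA\cA^{-1}y}_{\H_{1}}=\norm{y}_{\H_{1}}$, boundedness in the graph norm, so Lemma \ref{fatbl1} again yields closed ranges; then for any bounded sequence $(x_{n})$ in $D(\cA)$ the images $y_{n}:=\A x_{n}=\cA x_{n}$ are bounded in $R(\A)$, and compactness of $\cA^{-1}$ extracts a convergent subsequence of $x_{n}=\cA^{-1}y_{n}$ in $\H_{0}$. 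This establishes (i) $\Leftrightarrow$ (ii), and the identical argument applied to $\A^{*}$ in place of $\A$ (using $(\cA^{*})^{*}=\cA$) gives (i${}^{*}$) $\Leftrightarrow$ (ii${}^{*}$).

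It remains to link the two pairs, which I would do through (ii) $\Leftrightarrow$ (ii${}^{*}$). Since either of (ii), (ii${}^{*}$) already guarantees closed ranges, both $\cA^{-1}\colon\ol{R(\A)}\to\ol{R(\A^{*})}$ and $(\cA^{*})^{-1}\colon\ol{R(\A^{*})}\to\ol{R(\A)}$ are everywhere-defined bounded operators, and because $(\cA,\cA^{*})$ is a dual pair one verifies the adjoint identity $(\cA^{-1})^{*}=(\cA^{*})^{-1}$. Schauder's theorem---a bounded operator between Hilbert spaces is compact if and only if its adjoint is---then converts compactness of $\cA^{-1}$ into compactness of $(\cA^{*})^{-1}$ and back, completing the chain (i) $\Leftrightarrow$ (ii) $\Leftrightarrow$ (ii${}^{*}$) $\Leftrightarrow$ (i${}^{*}$).

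I expect the last step to be the main obstacle, specifically the careful bookkeeping that on the reduced closed-range spaces the inverses are genuinely everywhere-defined bounded operators and that $(\cA^{-1})^{*}=(\cA^{*})^{-1}$ holds there, so that Schauder's theorem applies cleanly; by contrast the Friedrichs--Poincar\'e contradiction and the factorisation through the compact embedding are routine.
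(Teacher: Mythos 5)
Your argument is correct. Note that the paper itself does not prove this lemma: it is quoted from the cited FA-ToolBox references, so there is no in-paper proof to compare against. Your route --- deriving the Friedrichs--Poincar\'e estimate (hence closed ranges) from each hypothesis, obtaining (i) $\Leftrightarrow$ (ii) by factoring $\cA^{-1}$ through the compact embedding $D(\cA)\incl\H_{0}$ and conversely writing $x_{n}=\cA^{-1}\A x_{n}$, and then bridging the starred and unstarred statements via $(\cA^{-1})^{*}=(\cA^{*})^{-1}$ and Schauder --- is the standard one and all steps check out; in particular the adjoint identity follows directly from $\scp{\cA^{-1}y}{\cA^{*}w}_{\H_{0}}=\scp{y}{(\cA^{*})^{-1}\cA^{*}w}_{\H_{1}}$ for $y\in R(\A)$, $w\in D(\cA^{*})$, once the ranges are closed.
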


\begin{rem}[sufficient assumptions for the first fa-toolbox lemmas]
\label{fatbr}
\mbox{}
\begin{itemize}
\item[\bf(i)]
If $R(\A)$ is closed, then the assertions of Lemma \ref{fatbl1} hold.
\item[\bf(ii)]
If $D(\cA)\incl\H_{0}$ is compact,
then the assertions of Lemma \ref{fatbl1} (and Lemma \ref{fatbl2}) hold.
In particular, the Friedrichs-Poincar\'e type estimates hold,
all ranges are closed and the inverse operators are compact.
\end{itemize}
\end{rem}

\subsubsection{Spectra and Point Spectra}
\label{spectra}

We emphasise that
\begin{align}
\label{eq:saops}
\A^{*}\A\geq0,\qquad
\A\A^{*}\geq0
\end{align}
are self-adjoint with essentially (except of $0$) the same non-negative spectrum.
The same holds true for the reduced operators $\cA^{*}\cA,\cA\cA^{*}>0$.
We shall give more details for the point spectrum in the next lemma.

\begin{lem}[fa-toolbox lemma 3/eigenvalues]
\label{lemconstev}
Let $D(\cA)\incl\H_{0}$ be compact. 
Then the operators in \eqref{eq:saops} are self-adjoint, non-negative, and have pure and discrete point spectra
with no accumulation point in $\reals$. Moreover,
$$\sigma(\cA^{*}\cA)
=\sigma(\A^{*}\A)\setminus\{0\}
=\sigma(\A\A^{*})\setminus\{0\}
=\sigma(\cA\cA^{*})
=\{\lambda_{k}\}_{k\in\nat}
\subset(0,\infty)$$
with eigenvalues 
$0<\lambda_{1}\leq\lambda_{2}\leq\dots\leq\lambda_{k-1}\leq\lambda_{k}\leq\dots\to\infty$.
%For all $\lambda_{n}$ the eigenspaces are finite dimensional
Only finitely many eigenvalues coincide, they are repeated according  their multiplicity, 
and it holds
$$N(\A^{*}\A-\lambda_{k})=N(\cA^{*}\cA-\lambda_{k}),\qquad
N(\A\A^{*}-\lambda_{k})=N(\cA\cA^{*}-\lambda_{k}).$$
\end{lem}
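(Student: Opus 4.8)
The plan is to transfer everything to the \emph{reduced} operators $\cA$ and $\cA^*$, where the spectral theorem for compact self-adjoint operators applies directly, and then to recover the full spectra via the Helmholtz type decompositions \eqref{helm} and \eqref{helmdecoAcA}. First I would record that, by von Neumann's theorem, $\A^*\A$ and $\A\A^*$ are self-adjoint and non-negative, as already noted in \eqref{eq:saops}; the same reasoning applied to the dual pair $(\cA,\cA^*)$ shows $\cA^*\cA$ and $\cA\cA^*$ are self-adjoint and non-negative on $\ol{R(\A^*)}$ and $\ol{R(\A)}$, respectively.

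Since $D(\cA)\incl\H_{0}$ is compact, Lemma \ref{fatbl2} yields that $\cA^{-1}$ and $(\cA^*)^{-1}$ are compact, while Remark \ref{fatbr} (equivalently Lemma \ref{fatbl1}) supplies the Friedrichs--Poincar\'e estimate $\norm{x}_{\H_{0}}\le c_{\A}\norm{\cA x}_{\H_{1}}$ for $x\in D(\cA)$. Hence
$$\scp{\cA^*\cA x}{x}_{\H_{0}}=\norm{\cA x}_{\H_{1}}^2\ge c_{\A}^{-2}\norm{x}_{\H_{0}}^2,$$
so that $\cA^*\cA\ge c_{\A}^{-2}>0$ is boundedly invertible with $(\cA^*\cA)^{-1}=\cA^{-1}(\cA^*)^{-1}$, a product of a bounded and a compact operator, hence compact. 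By the spectral theorem for compact, self-adjoint, positive operators, $(\cA^*\cA)^{-1}$ has a purely discrete set of positive eigenvalues accumulating only at $0$, each with finite-dimensional eigenspace; inverting, $\cA^*\cA$ has pure point spectrum $\{\lambda_{n}\}_{n\in\nat}\subset(0,\infty)$ with $\lambda_{n}\to\infty$, no finite accumulation point, and finite-dimensional eigenspaces. The same argument applies to $\cA\cA^*$.

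Next I would identify the four spectra. Along $\H_{0}=N(\A)\oplus_{\H_{0}}\ol{R(\A^*)}$ the operator $\A^*\A$ is reduced: on $N(\A)=N(\A^*\A)$ it vanishes, and on $\ol{R(\A^*)}$ it coincides with $\cA^*\cA$ by \eqref{helmdecoAcA}; thus $\sigma(\A^*\A)\setminus\{0\}=\sigma(\cA^*\cA)$, and symmetrically $\sigma(\A\A^*)\setminus\{0\}=\sigma(\cA\cA^*)$. Any eigenvector $x$ of $\A^*\A$ with $\lambda_{n}>0$ satisfies $x=\lambda_{n}^{-1}\A^*\A x\in R(\A^*)\subset\ol{R(\A^*)}$, hence lies in $D(\cA^*\cA)$ and is an eigenvector of $\cA^*\cA$; the converse inclusion is immediate, giving $N(\A^*\A-\lambda_{n})=N(\cA^*\cA-\lambda_{n})$, and likewise $N(\A\A^*-\lambda_{n})=N(\cA\cA^*-\lambda_{n})$. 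Finally $\cA^*\cA$ and $\cA\cA^*$ are isospectral: if $\cA^*\cA x=\lambda x$ with $\lambda>0$, applying $\cA$ gives $\cA\cA^*(\cA x)=\lambda(\cA x)$ with $\cA x\ne 0$ since $\cA$ is injective, and $\cA^*$ maps eigenvectors back, so $x\mapsto\cA x$ is a bijection between the eigenspaces; hence $\sigma(\cA^*\cA)=\sigma(\cA\cA^*)$ and all four spectra coincide.

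The only genuinely delicate point is the \emph{reduction} of the self-adjoint operator $\A^*\A$ by the orthogonal splitting \eqref{helm}, i.e.\ verifying that $N(\A)$ and $\ol{R(\A^*)}$ are invariant and that the part acting in $\ol{R(\A^*)}$ is exactly $\cA^*\cA$ with matching domain $D(\A^*\A)\cap\ol{R(\A^*)}=D(\cA^*\cA)$; this rests on the refined Helmholtz decomposition \eqref{helmdecoAcA} together with the identity $N(\A^*\A)=N(\A)$. Once this bookkeeping is secured, the remaining assertions are immediate consequences of the compact self-adjoint spectral theorem applied to the reduced operators.
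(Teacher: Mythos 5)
Your proof is correct. The paper does not actually prove Lemma \ref{lemconstev} itself --- it is quoted from the cited FA-ToolBox references --- but your argument is precisely the standard one underlying them: reduce to $\cA^{*}\cA$ and $\cA\cA^{*}$, use the Friedrichs--Poincar\'e estimate from Lemma \ref{fatbl1}/Remark \ref{fatbr} for bounded invertibility, compose the compact inverses from Lemma \ref{fatbl2} to get a compact self-adjoint resolvent, apply the spectral theorem, and transfer back via the reduction by \eqref{helm} and \eqref{helmdecoAcA} together with $N(\A^{*}\A)=N(\A)$; the isospectrality map $x\mapsto\cA x$ you use at the end is the same computation the paper records immediately before Lemma \ref{lem:eigenvec1}. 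The one point you rightly flag as delicate --- that $D(\A^{*}\A)\cap\ol{R(\A^{*})}=D(\cA^{*}\cA)$ with matching action --- does go through exactly as you describe, since $\A x\in\ol{R(\A)}$ and $\A^{*}y\in\ol{R(\A^{*})}$ automatically.
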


%In case of the latter lemma we set
%$$d_{n}:=\dim N(\A^{*}\A-\lambda_{n})=\dim N(\A\A^{*}-\lambda_{n})\in\nat.$$

\begin{rem}[variational formulations]
\label{remconstev}
For any eigenvector $x$ of $\A^{*}\A$ associated with  an eigenvalue $\lambda_{k}$ we have
$$(\A^{*}\A-\lambda_{k})x=0,\qquad
x\in D(\A^{*}\A)\cap R(\A^{*})=D(\cA^{*}\cA)\subset D(\cA),$$
and the variational formulation
$$\forall\,\phi\in D(\A)\qquad
\scp{\A x}{\A\phi}_{\H_{1}}
=\lambda_{k}\scp{x}{\phi}_{\H_{0}}$$
holds. The corresponding results hold for any eigenvector $y$ of $\A\A^{*}$ to $\lambda_{k}$.
Note that, e.g., $y=\A x$.
\end{rem}

For $x\in N(\A^{*}\A-\lambda_{k})$
and $y:=\lambda_{k}^{-1/2}\A x$
we observe that 
$$x
=\lambda_{k}^{-1}\A^{*}\A x
=\lambda_{k}^{-1/2}\A^{*}y,\qquad
\A\A^{*}y
=\lambda_{k}^{1/2}\A x
=\lambda_{k}y$$
and 
$$\norm{y}_{\H_{1}}^{2}
=\lambda_{k}^{-1}\norm{\A x}_{\H_{1}}^{2}
=\lambda_{k}^{-1}\bscp{\A^{*}\A x}{x}_{\H_{0}}
=\norm{x}_{\H_{0}}^{2},$$
which shows the following: 
%and for $x_{1},x_{2}\in N(\A^{*}\A-\lambda_{n})$
%and $y_{1},y_{2}$ defined by $y_{k}:=\lambda_{n}^{-1/2}\A x_{k}$ we have
%$$\scp{y_{k}}{y_{l}}_{\H_{1}}
%=\lambda_{n}^{-1}\bscp{\A x_{k}}{\A x_{l}}_{\H_{1}}
%=\lambda_{n}^{-1}\bscp{\A^{*}\A x_{k}}{x_{l}}_{\H_{0}}
%=\scp{x_{k}}{x_{l}}_{\H_{0}},$$
%which shows the following: 

\begin{lem}[eigenvectors]
\label{lem:eigenvec1}
The following statements hold:
\begin{itemize}
\item[\bf(i)]
If $x$ is an eigenvector of $\A^{*}\A$ for the eigenvalue $\lambda_{k}$, 
then $y:=\A x$ is an eigenvector of $\A\A^{*}$ for the same eigenvalue $\lambda_{k}$.
\item[\bf(i${}^{*}$)]
If $y$ is an eigenvector of $\A\A^{*}$ for the eigenvalue $\lambda_{k}$, 
then $x:=\A^{*}y$ is an eigenvector of $\A^{*}\A$ for the same eigenvalue $\lambda_{k}$.
%\item[\bf(ii)]
%If $\{x_{1},\dots,x_{d_{n}}\}$ is a $\H_{0}$-orthonormal basis of 
%$N(\A^{*}\A-\lambda_{n})$,
%then $\{y_{1},\dots,y_{d_{n}}\}$ defined by $y_{k}:=\lambda_{n}^{-1/2}\A x_{k}$
%is a $\H_{1}$-orthonormal basis of 
%$N(\A\A^{*}-\lambda_{n})$.
%\item[\bf(ii${}^{*}$)]
%If $\{y_{1},\dots,y_{d_{n}}\}$ is a $\H_{1}$-orthonormal basis of 
%$N(\A\A^{*}-\lambda_{n})$,
%then $\{x_{1},\dots,x_{d_{n}}\}$ defined by $x_{k}:=\lambda_{n}^{-1/2}\A^{*}y_{k}$
%is a $\H_{0}$-orthonormal basis of 
%$N(\A^{*}\A-\lambda_{n})$.
\end{itemize}
\end{lem}

\begin{lem}[eigenvalues, Friedrichs-Poincar\'e type constants, and Rayleigh quotients]
\label{lem:eigenval2}
The `best' constants in Lemma \ref{fatbl1} (i) and (i${}^{*}$)
are given by the Rayleigh quotients and equal each other 
and the inverse of the square root the first positive eigenvalue of $\A^{*}\A$ and $\A\A^{*}$, i.e.,
$$\lambda_{1}^{1/2}
=\frac{1}{c_{\A}}
=\inf_{0\neq x\in D(\cA)}\frac{\norm{\A x}_{\H_{1}}}{\norm{x}_{\H_{0}}}
=\inf_{0\neq y\in D(\cA^{*})}\frac{\norm{\A^{*} y}_{\H_{0}}}{\norm{y}_{\H_{1}}}
=\frac{1}{c_{\A^{*}}}.$$
Note that similar formulas hold for all eigenvalues, i.e., 
$$\lambda_{k}^{1/2}
=\inf_{x}\frac{\norm{\A x}_{\H_{1}}}{\norm{x}_{\H_{0}}}
=\inf_{y}\frac{\norm{\A^{*} y}_{\H_{0}}}{\norm{y}_{\H_{1}}},$$
where the infima are taken over all 
$0\neq x\in D(\cA)$ and $0\neq y\in D(\cA^{*})$ with 
$\displaystyle x\bot_{\H_{0}}\bigoplus_{\ell=1}^{k-1}N(\A^{*}\A-\lambda_{\ell})$
and $\displaystyle y\bot_{\H_{1}}\bigoplus_{\ell=1}^{k-1}N(\A\A^{*}-\lambda_{\ell})$.
All infima are minima and are attained at the corresponding eigenvectors, i.e.,
for all $k$ and all eigenvectors $x_{k}\in N(\A^{*}\A-\lambda_{k})$ and $y_{k}\in N(\A\A^{*}-\lambda_{k})$
we have 
$$\frac{\scp{\A^{*}\A x_{k}}{x_{k}}_{\H_{0}}}{\norm{x_{k}}_{\H_{0}}^{2}}
=\frac{\norm{\A x_{k}}_{\H_{1}}^{2}}{\norm{x_{k}}_{\H_{0}}^{2}}
=\lambda_{k}
=\frac{\norm{\A^{*} y_{k}}_{\H_{0}}^{2}}{\norm{y_{k}}_{\H_{1}}^{2}}
=\frac{\scp{\A\A^{*}y_{k}}{y_{k}}_{\H_{1}}}{\norm{y_{k}}_{\H_{1}}^{2}}.$$
\end{lem}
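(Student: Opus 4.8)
The plan is to reduce everything to the reduced operators $\cA$, $\cA^*$ and apply the spectral theorem for self-adjoint operators with compact resolvent, which is available since $D(\cA)\incl\H_0$ is compact and hence Lemma \ref{lemconstev} applies. I first record two elementary identities. On $D(\cA)=D(\A)\cap\ol{R(\A^*)}$ the operators $\A$ and $\cA$ coincide, so $\norm{\A x}_{\H_1}/\norm{x}_{\H_0}=\norm{\cA x}_{\H_1}/\norm{x}_{\H_0}$ for $0\neq x\in D(\cA)$, and analogously for $\A^*$, $\cA^*$; moreover, for $x\in D(\cA^*\cA)$ one has $\norm{\cA x}_{\H_1}^2=\scp{\cA^*\cA x}{x}_{\H_0}$, so that the squared Rayleigh quotient of $\cA$ is the Rayleigh quotient of the self-adjoint non-negative operator $\cA^*\cA$. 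By Lemma \ref{lemconstev} the operator $\cA^*\cA$ has pure, discrete point spectrum $\{\lambda_n\}$, and its eigenvectors form a complete orthonormal system of $\ol{R(\A^*)}$.

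The heart of the argument is the spectral expansion. I fix a complete $\H_0$-orthonormal system $\{x_{n,k}:n\in\nat,\,1\leq k\leq d_n\}$ of eigenvectors of $\cA^*\cA$ in $\ol{R(\A^*)}$, with $\cA^*\cA x_{n,k}=\lambda_n x_{n,k}$. For $x\in D(\cA)$ I write $x=\sum_{n,k}c_{n,k}x_{n,k}$; then $\norm{x}_{\H_0}^2=\sum_{n,k}\norm{c_{n,k}}^2$ and, identifying $D(\cA)$ with the form domain of $\cA^*\cA$, $\norm{\cA x}_{\H_1}^2=\sum_{n,k}\lambda_n\norm{c_{n,k}}^2$. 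If in addition $x\bot_{\H_0}\bigoplus_{\ell=1}^{n-1}N(\A^*\A-\lambda_\ell)$, then $c_{m,k}=0$ for $m\leq n-1$, whence $\norm{\cA x}_{\H_1}^2\geq\lambda_n\sum_{m\geq n,k}\norm{c_{m,k}}^2=\lambda_n\norm{x}_{\H_0}^2$, so the Rayleigh quotient is $\geq\lambda_n^{1/2}$, with equality precisely when $x\in N(\A^*\A-\lambda_n)$. This shows that the infimum defining $\lambda_n^{1/2}$ over such $x$ is in fact a minimum, attained exactly at the eigenvectors, and gives $\lambda_n^{1/2}=\min\norm{\A x}_{\H_1}/\norm{x}_{\H_0}$.

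The dual formula follows by symmetry. Exchanging the roles of $(\A,\H_0)$ and $(\A^*,\H_1)$ and using that, by Lemma \ref{lemconstev}, $\cA\cA^*$ has the same eigenvalues $\{\lambda_n\}$ with eigenspaces $N(\A\A^*-\lambda_n)$ of the same dimension $d_n$ (cf.~Lemma \ref{lem:eigenvec1}), the identical argument yields $\lambda_n^{1/2}=\min\norm{\A^* y}_{\H_0}/\norm{y}_{\H_1}$ over $0\neq y\in D(\cA^*)$ with $y\bot_{\H_1}\bigoplus_{\ell=1}^{n-1}N(\A\A^*-\lambda_\ell)$; both infima equal $\lambda_n^{1/2}$ and hence each other. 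The first assertion is the case $n=1$, where the orthogonality constraints are vacuous: the infima run over all of $D(\cA)$ resp.~$D(\cA^*)$ and give $1/c_\A=\lambda_1^{1/2}=1/c_{\A^*}$, identifying the best Friedrichs--Poincar\'e constants of Lemma \ref{fatbl1} (i), (i${}^{*}$). Finally, the displayed four-fold equality is immediate: for $x_n\in N(\A^*\A-\lambda_n)$ one has $\scp{\A^*\A x_n}{x_n}_{\H_0}=\norm{\A x_n}_{\H_1}^2=\lambda_n\norm{x_n}_{\H_0}^2$, and symmetrically $\scp{\A\A^*y_n}{y_n}_{\H_1}=\norm{\A^* y_n}_{\H_0}^2=\lambda_n\norm{y_n}_{\H_1}^2$ for $y_n\in N(\A\A^*-\lambda_n)$.

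The step I expect to be the main obstacle is the identification of $D(\cA)$ with the form domain of $\cA^*\cA$, i.e.~the validity of $\norm{\cA x}_{\H_1}^2=\sum_{n,k}\lambda_n\norm{c_{n,k}}^2$ for all $x\in D(\cA)$ and not merely for $x\in D(\cA^*\cA)$. The clean way to settle this is the polar decomposition $\cA=U(\cA^*\cA)^{1/2}$, where $U$ is unitary from $\ol{R(\A^*)}$ onto $\ol{R(\A)}$ because $\cA$ is injective with dense range; this yields $D(\cA)=D((\cA^*\cA)^{1/2})$ and $\norm{\cA x}_{\H_1}=\norm{(\cA^*\cA)^{1/2}x}_{\H_0}$, after which the spectral theorem for $(\cA^*\cA)^{1/2}$ furnishes the expansion on the whole form domain. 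Equivalently, one can argue by density, using that $D(\cA^*\cA)$ is a core for the closed quadratic form $x\mapsto\norm{\cA x}_{\H_1}^2$ and passing to the limit in the Rayleigh quotient.
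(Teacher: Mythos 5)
Your proof is correct. The paper itself gives no proof of this lemma -- it is stated as cited material from the FA-ToolBox references -- so there is nothing to compare against line by line; your route via the spectral theorem for $\cA^{*}\cA$ (discrete spectrum from the compact embedding $D(\cA)\incl\H_{0}$, Lemma \ref{lemconstev}) together with the polar decomposition $\cA=U(\cA^{*}\cA)^{1/2}$ is the standard argument and correctly identifies and resolves the one delicate point, namely that $D(\cA)$ coincides with the form domain of $\cA^{*}\cA$ so that $\norm{\cA x}_{\H_{1}}^{2}=\sum_{n,k}\lambda_{n}|c_{n,k}|^{2}$ holds for all $x\in D(\cA)$ and not only on $D(\cA^{*}\cA)$. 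The symmetry argument for the dual quotient and the reduction of the first display to the case $n=1$ are likewise sound.
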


\subsubsection{Hilbert Complexes}

Now, let 
$$\A_{0}\!:\!D(\A_{0})\subset\H_{0}\to\H_{1},\quad
\A_{1}\!:\!D(\A_{1})\subset\H_{1}\to\H_{2}$$
be two densely defined and closed linear operators 
on three Hilbert spaces $\H_{0}$, $\H_{1}$, and $\H_{2}$ with adjoints 
$$\A_{0}^{*}\!:\!D(\A_{0}^{*})\subset\H_{1}\to\H_{0},\quad
\A_{1}^{*}\!:\!D(\A_{1}^{*})\subset\H_{2}\to\H_{1}$$
as well as reduced operators $\cA_{0}$, $\cA_{0}^{*}$, and $\cA_{1}$, $\cA_{1}^{*}$.
Furthermore, we assume the complex property
of $\A_{0}$ and $\A_{1}$, that is $\A_{1}\A_{0}=0$, i.e.,
\begin{align}
\label{sequenceprop}
R(\A_{0})\subset N(\A_{1}),
\end{align}
being equivalent to $R(\A_{1}^{*})\subset N(\A_{0}^{*})$.
Recall that
$$R(\A_{0})=R(\cA_{0}),\quad
R(\A_{0}^{*})=R(\cA_{0}^{*}),\quad
R(\A_{1})=R(\cA_{1}),\quad
R(\A_{1}^{*})=R(\cA_{1}^{*}).$$
From the Helmholtz type decompositions \eqref{helm} for $\A=\A_{0}$ and $\A=\A_{1}$ we get in particular
\begin{align}
\label{helmappclone}
\H_{1}=\ol{R(\A_{0})}\oplus_{\H_{1}}N(\A_{0}^{*}),\quad
\H_{1}=\ol{R(\A_{1}^{*})}\oplus_{\H_{1}}N(\A_{1}).
\end{align}
Introducing the cohomology group 
$$N_{0,1}:=N(\A_{1})\cap N(\A_{0}^{*}),$$
we obtain the refined Helmholtz type decompositions 
\begin{align}
\label{helmrefinedone}
\begin{aligned}
N(\A_{1})&=\ol{R(\A_{0})}\oplus_{\H_{1}}N_{0,1},
&
N(\A_{0}^{*})&=\ol{R(\A_{1}^{*})}\oplus_{\H_{1}}N_{0,1},\\
D(\A_{1})&=\ol{R(\A_{0})}\oplus_{\H_{1}}\big(D(\A_{1})\cap N(\A_{0}^{*})\big),\quad
&
D(\A_{0}^{*})&=\ol{R(\A_{1}^{*})}\oplus_{\H_{1}}\big(D(\A_{0}^{*})\cap N(\A_{1})\big),
\end{aligned}
\end{align}
and therefore the Helmholtz type decomposition
\begin{align}
\label{helmrefinedtwo}
\H_{1}&=\ol{R(\A_{0})}\oplus_{\H_{1}}N_{0,1}\oplus_{\H_{1}}\ol{R(\A_{1}^{*})}
\end{align}
follows. Let us remark that the first line of \eqref{helmrefinedone} can also be written as
$$\ol{R(\A_{0})}=N(\A_{1})\cap N_{0,1}^{\bot_{\H_{1}}},\quad
\ol{R(\A_{1}^{*})}=N(\A_{0}^{*})\cap N_{0,1}^{\bot_{\H_{1}}}.$$
Note that \eqref{helmrefinedtwo} can be further refined and specialised, e.g., to
\begin{align}
\begin{split}
\label{helmrefinedthree}
D(\A_{1})&=\ol{R(\A_{0})}\oplus_{\H_{1}}N_{0,1}\oplus_{\H_{1}}D(\cA_{1}),\\
D(\A_{0}^{*})&=D(\cA_{0}^{*})\oplus_{\H_{1}}N_{0,1}\oplus_{\H_{1}}\ol{R(\A_{1}^{*})},\\
D(\A_{1})\cap D(\A_{0}^{*})&=D(\cA_{0}^{*})\oplus_{\H_{1}}N_{0,1}\oplus_{\H_{1}}D(\cA_{1}).
\end{split}
\end{align}
We observe
\begin{align*}
D(\cA_{1})
&=D(\A_{1})\cap\ol{R(\A_{1}^{*})}
\subset D(\A_{1})\cap N(\A_{0}^{*})
\subset D(\A_{1})\cap D(\A_{0}^{*}),\\
D(\cA_{0}^{*})
&=D(\A_{0}^{*})\cap\ol{R(\A_{0})}
\subset D(\A_{0}^{*})\cap N(\A_{1})
\subset D(\A_{0}^{*})\cap D(\A_{1}),
\end{align*}
and using the refined Helmholtz type decompositions \eqref{helmrefinedtwo} and \eqref{helmrefinedthree}
as well as the results of Lemma~\ref{fatbl2} we immediately see:

\begin{lem}[fa-toolbox lemma 4/compact embeddings]
\label{compemblem}
The following assertions are equivalent: 
\begin{itemize}
\item[\bf(i)]
$D(\cA_{0})\incl\H_{0}$, $D(\cA_{1})\incl\H_{1}$,
and $N_{0,1}\incl\H_{1}$ are compact.
\item[\bf(ii)]
$D(\A_{1})\cap D(\A_{0}^{*})\incl\H_{1}$ is compact.
\end{itemize}
In this case, the cohomology group $N_{0,1}$ has finite dimension.
\end{lem}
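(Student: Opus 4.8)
The plan is to read off everything from the two orthogonal decompositions already established, namely \eqref{helmrefinedthree}
$$D(\A_1)\cap D(\A_0^*)=D(\cA_0^*)\oplus_{\H_1}N_{0,1}\oplus_{\H_1}D(\cA_1)$$
on the source side and \eqref{helmrefinedtwo}
$$\H_1=\ol{R(\A_0)}\oplus_{\H_1}N_{0,1}\oplus_{\H_1}\ol{R(\A_1^*)}$$
on the target side. First I would upgrade \eqref{helmrefinedthree} from an $\H_1$-orthogonal to a \emph{graph}-orthogonal decomposition of the Hilbert space $D(\A_1)\cap D(\A_0^*)$ (equipped with the graph norm). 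Writing $x=x_0+x_{01}+x_1$ with $x_0\in D(\cA_0^*)$, $x_{01}\in N_{0,1}$, $x_1\in D(\cA_1)$, the inclusions observed before the lemma give $D(\cA_0^*)\subset\ol{R(\A_0)}\subset N(\A_1)$ and $D(\cA_1)\subset\ol{R(\A_1^*)}\subset N(\A_0^*)$, while $N_{0,1}\subset N(\A_1)\cap N(\A_0^*)$. Hence $\A_1 x=\A_1 x_1$ and $\A_0^* x=\A_0^* x_0$, so that
$$\norm{x}_{\H_1}^2+\norm{\A_1 x}_{\H_2}^2+\norm{\A_0^* x}_{\H_0}^2
=\big(\norm{x_0}_{\H_1}^2+\norm{\A_0^* x_0}_{\H_0}^2\big)+\norm{x_{01}}_{\H_1}^2+\big(\norm{x_1}_{\H_1}^2+\norm{\A_1 x_1}_{\H_2}^2\big).$$
Thus the graph norm splits as the sum of the intrinsic graph norms of $D(\cA_0^*)$, of $N_{0,1}$ (on which it reduces to the plain $\H_1$-norm), and of $D(\cA_1)$.

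Next I would observe that, because $D(\cA_0^*)\subset\ol{R(\A_0)}$, $D(\cA_1)\subset\ol{R(\A_1^*)}$, and $N_{0,1}$ is the middle summand on both sides, the canonical embedding $\iota\colon D(\A_1)\cap D(\A_0^*)\incl\H_1$ is block-diagonal relative to the two orthogonal decompositions, with diagonal blocks
$$D(\cA_0^*)\incl\ol{R(\A_0)},\qquad N_{0,1}\incl N_{0,1},\qquad D(\cA_1)\incl\ol{R(\A_1^*)}.$$
A block-diagonal operator between finite orthogonal sums of Hilbert spaces is compact if and only if each diagonal block is compact, so $\iota$ is compact precisely when the three embeddings $D(\cA_0^*)\incl\H_1$, $N_{0,1}\incl\H_1$, and $D(\cA_1)\incl\H_1$ are all compact. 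Applying Lemma \ref{fatbl2} to the dual pair $(\A_0,\A_0^*)$ then replaces ``$D(\cA_0^*)\incl\H_1$ compact'' by the equivalent ``$D(\cA_0)\incl\H_0$ compact'', turning the list into exactly the three conditions in (i). This establishes the equivalence of (i) and (ii) in one stroke, in both directions.

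For the final assertion I would use that, under (i), the embedding $N_{0,1}\incl\H_1$ is compact; but on $N_{0,1}$ the graph norm equals the $\H_1$-norm, so this embedding is the identity of the closed subspace $N_{0,1}$. A Hilbert space whose identity is compact has a compact closed unit ball, hence is finite dimensional by Riesz's lemma, which gives $\dim N_{0,1}<\infty$.

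The only genuinely delicate step is the upgrade of \eqref{helmrefinedthree} to graph-orthogonality; once that is in place, the block-diagonal compactness criterion, the dual compactness equivalence of Lemma \ref{fatbl2}, and the finite-dimensionality argument are all standard and ``immediate''.
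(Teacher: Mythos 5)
Your argument is correct and follows exactly the route the paper indicates: it reads the equivalence off the refined Helmholtz decompositions \eqref{helmrefinedtwo} and \eqref{helmrefinedthree} together with Lemma \ref{fatbl2}, which is precisely what the paper's one-line justification (``we immediately see'') appeals to. You have simply made explicit the two details the paper leaves tacit --- that \eqref{helmrefinedthree} is orthogonal for the graph inner product because $\A_{1}$ and $\A_{0}^{*}$ annihilate the respective other summands, and that the embedding is block-diagonal relative to the two decompositions --- and both are verified correctly, including the final Riesz argument for $\dim N_{0,1}<\infty$.
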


We summarise:

\begin{lem}[fa-toolbox lemma 5]
\label{lem:compemblem}
Let the ranges $R(\A_{0})$ and $R(\A_{1})$ be closed.
Then $R(\A_{0}^{*})$ and $R(\A_{1}^{*})$ are also closed, and
the corresponding Friedrichs-Poincar\'e type estimates hold, i.e.,
there exists a positive constant $c$ such that
\begin{align*}
\forall\,z&\in D(\cA_{0})=D(\A_{0})\cap R(\A_{0}^{*})
&
\norm{z}_{\H_{0}}&\leq c\norm{\A_{0} z}_{\H_{1}},\\
\forall\,x&\in D(\cA_{0}^{*})=D(\A_{0}^{*})\cap R(\A_{0})=D(\A_{0}^{*})\cap N(\A_{1})\cap N_{0,1}^{\bot_{\H_{1}}}
&
\norm{x}_{\H_{1}}&\leq c\norm{\A_{0}^{*} x}_{\H_{0}},\\
\forall\,x&\in D(\cA_{1})=D(\A_{1})\cap R(\A_{1}^{*})=D(\A_{1})\cap N(\A_{0}^{*})\cap N_{0,1}^{\bot_{\H_{1}}}
&
\norm{x}_{\H_{1}}&\leq c\norm{\A_{1} x}_{\H_{2}},\\
\forall\,y&\in D(\cA_{1}^{*})=D(\A_{1}^{*})\cap R(\A_{1})
&
\norm{y}_{\H_{2}}&\leq c\norm{\A_{1}^{*} y}_{\H_{1}},
\end{align*}
and
$$\forall\,x\in D(\A_{1})\cap D(\A_{0}^{*})\cap N_{0,1}^{\bot_{\H_{1}}}\qquad
\norm{x}_{\H_{1}}
\leq c\big(\norm{\A_{1} x}_{\H_{2}}+\norm{\A_{0}^{*} x}_{\H_{0}}\big).$$
Moreover, all Helmholtz type decompositions \eqref{helmappclone}-\eqref{helmrefinedthree}
hold with closed ranges, in particular
$$\H_{1}=R(\A_{0})\oplus_{\H_{1}}N_{0,1}\oplus_{\H_{1}}R(\A_{1}^{*}).$$
\end{lem}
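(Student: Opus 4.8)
The plan is to reduce everything to the single-operator theory of Lemma~\ref{fatbl1} applied separately to $\A_{0}$ and $\A_{1}$, and then to glue the two resulting Friedrichs--Poincar\'e estimates together by means of the Helmholtz type decomposition \eqref{helmrefinedtwo}.

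First I would observe that, since $R(\A_{0})$ and $R(\A_{1})$ are closed by hypothesis, Lemma~\ref{fatbl1} (used once with $\A=\A_{0}$ and once with $\A=\A_{1}$) at once gives that $R(\A_{0}^{*})$ and $R(\A_{1}^{*})$ are closed and that the four individual estimates for $\cA_{0},\cA_{0}^{*},\cA_{1},\cA_{1}^{*}$ hold; taking $c$ to be the maximum of the four constants yields a single $c$ serving all of them. The stated domain descriptions are then merely the defining identities $D(\cA)=D(\A)\cap\ol{R(\A^{*})}$ with the overlines removed (the ranges now being closed), combined with the reformulations $\ol{R(\A_{0})}=N(\A_{1})\cap N_{0,1}^{\bot_{\H_{1}}}$ and $\ol{R(\A_{1}^{*})}=N(\A_{0}^{*})\cap N_{0,1}^{\bot_{\H_{1}}}$ recorded just after \eqref{helmrefinedone}.

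The core of the argument is the combined estimate. Given $x\in D(\A_{1})\cap D(\A_{0}^{*})\cap N_{0,1}^{\bot_{\H_{1}}}$, I would split it along \eqref{helmrefinedtwo}, namely $\H_{1}=R(\A_{0})\oplus_{\H_{1}}N_{0,1}\oplus_{\H_{1}}R(\A_{1}^{*})$ with closed ranges; since $x\bot_{\H_{1}}N_{0,1}$ the middle component vanishes, so $x=x_{R}+x_{R^{*}}$ with $x_{R}\in R(\A_{0})$ and $x_{R^{*}}\in R(\A_{1}^{*})$. The complex property \eqref{sequenceprop}, i.e.\ $R(\A_{0})\subset N(\A_{1})$ and $R(\A_{1}^{*})\subset N(\A_{0}^{*})$, gives $x_{R}\in D(\A_{1})$ with $\A_{1}x_{R}=0$ and $x_{R^{*}}\in D(\A_{0}^{*})$ with $\A_{0}^{*}x_{R^{*}}=0$; hence $x_{R^{*}}=x-x_{R}\in D(\A_{1})\cap R(\A_{1}^{*})=D(\cA_{1})$ and $x_{R}=x-x_{R^{*}}\in D(\A_{0}^{*})\cap R(\A_{0})=D(\cA_{0}^{*})$, while simultaneously $\A_{1}x_{R^{*}}=\A_{1}x$ and $\A_{0}^{*}x_{R}=\A_{0}^{*}x$. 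Applying the two single estimates to $x_{R^{*}}$ and $x_{R}$ and invoking orthogonality of the two summands (Pythagoras) yields
\[
\norm{x}_{\H_{1}}^{2}
=\norm{x_{R}}_{\H_{1}}^{2}+\norm{x_{R^{*}}}_{\H_{1}}^{2}
\leq c^{2}\norm{\A_{0}^{*}x}_{\H_{0}}^{2}+c^{2}\norm{\A_{1}x}_{\H_{2}}^{2},
\]
whence the claimed estimate follows. The Helmholtz type decompositions \eqref{helmappclone}--\eqref{helmrefinedthree} with closed ranges are then obtained simply by deleting every overline, all four ranges being closed.

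The only mildly delicate point is verifying that each summand $x_{R}$, $x_{R^{*}}$ actually lands in the reduced domain to which its estimate applies; this is precisely where the complex property is exploited, to annihilate the cross terms $\A_{1}x_{R}$ and $\A_{0}^{*}x_{R^{*}}$ and thereby transfer the domain membership (and the relevant action of the operators) from $x$ to its two components. Everything else is routine bookkeeping of the equivalences and decompositions already established.
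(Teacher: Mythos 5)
Your proposal is correct and follows exactly the route the paper intends: the lemma is presented as a summary (``We summarise:'') of Lemma \ref{fatbl1} applied to $\A_{0}$ and $\A_{1}$ together with the refined Helmholtz type decompositions, and your splitting of $x$ via \eqref{helmrefinedtwo}, the use of the complex property to place $x_{R}\in D(\cA_{0}^{*})$ and $x_{R^{*}}\in D(\cA_{1})$ with $\A_{0}^{*}x_{R}=\A_{0}^{*}x$ and $\A_{1}x_{R^{*}}=\A_{1}x$, and the Pythagoras step supply precisely the details the paper leaves implicit.
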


In other words, the primal and dual complex
\begin{equation}
\label{complex1}
\def\arrowlength{10ex}
\def\arrowdistance{.8}
\begin{tikzcd}[column sep=\arrowlength]
\H_{0}
\ar[r, rightarrow, shift left=\arrowdistance, "\A_{0}"] 
\ar[r, leftarrow, shift right=\arrowdistance, "\A_{0}^{*}"']
&
[-1em]
\H_{1}
\arrow[r, rightarrow, shift left=\arrowdistance, "\A_{1}"] 
\arrow[r, leftarrow, shift right=\arrowdistance, "\A_{1}^{*}"']
& 
[-1em]
\H_{2}
\end{tikzcd}
\end{equation}
is a Hilbert complex of closed and densely defined linear operators.
We call the complex \emph{closed} if the ranges $R(\A_{0})$ and $R(\A_{1})$ are closed.
The complex is \emph{exact} if $N_{0,1}=\{0\}$.
The complex is called \emph{compact}, if the embedding
\begin{align}
\label{cptembcrucial}
D(\A_{1})\cap D(\A_{0}^{*})\incl\H_{1}
\end{align} 
is compact.

\subsubsection{Generalised Laplacian}

Finally, we present some results for the densely defined and closed generalised Laplacian
$$\rho\A_{0}\A_{0}^{*}+\A_{1}^{*}\A_{1}:
D(\tau\A_{0}\A_{0}^{*}+\A_{1}^{*}\A_{1})\subset\H_{1}\to\H_{1},\qquad
\rho>0,$$
with
$D(\rho\A_{0}\A_{0}^{*}+\A_{1}^{*}\A_{1})
:=D(\A_{1}^{*}\A_{1})\cap D(\A_{0}\A_{0}^{*})
\subset D(\A_{1})\cap D(\A_{0}^{*})$.

\begin{lem}[fa-toolbox lemma 6/eigenvalues]
\label{lemconstevcomplex}
Let $D(\A_{1})\cap D(\A_{0}^{*})\incl\H_{1}$ be compact. Then
$\rho\A_{0}\A_{0}^{*}+\A_{1}^{*}\A_{1}$
is self-adjoint, non-negative, and has pure and discrete point spectrum with no accumulation point, i.e.,
$$\sigma(\rho\A_{0}\A_{0}^{*}+\A_{1}^{*}\A_{1})\setminus\{0\}
=\big(\rho\sigma(\A_{0}^{*}\A_{0})\setminus\{0\}\big)
\cup\big(\sigma(\A_{1}^{*}\A_{1})\setminus\{0\}\big)
=\rho\{\lambda_{0,k}\}_{k\in\nat}\cup\{\lambda_{1,k}\}_{k\in\nat}$$
with eigenvalues 
$0<\lambda_{\ell,1}\leq\lambda_{\ell,2}\leq\dots\leq\lambda_{\ell,k-1}\leq\lambda_{\ell,k}\leq\dots\to\infty$
of $\cA_{\ell}^{*}\cA_{\ell}$ for $\ell\in\{0,1\}$. 
Only finitely many eigenvalues coincide  and they are repeated according to their multiplicity.
%For all $\lambda_{\ell,n}$ the eigenspaces are finite dimensional.
Moreover, 
$$\rho\A_{0}\A_{0}^{*}+\A_{1}^{*}\A_{1}:
D(\rho\A_{0}\A_{0}^{*}+\A_{1}^{*}\A_{1})\cap N_{0,1}^{\bot_{\H_{1}}}\to N_{0,1}^{\bot_{\H_{1}}}$$
is a topological isomorphism.
\end{lem}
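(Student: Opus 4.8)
The plan is to reduce the generalised Laplacian $L:=\rho\A_{0}\A_{0}^{*}+\A_{1}^{*}\A_{1}$ to an orthogonal direct sum of self-adjoint pieces by means of the refined Helmholtz type decomposition \eqref{helmrefinedtwo},
$$\H_{1}=\ol{R(\A_{0})}\oplus_{\H_{1}}N_{0,1}\oplus_{\H_{1}}\ol{R(\A_{1}^{*})}.$$
First I would record the structural consequences of the hypothesis: by Lemma \ref{compemblem} the compactness of $D(\A_{1})\cap D(\A_{0}^{*})\incl\H_{1}$ yields that $D(\cA_{0})\incl\H_{0}$, $D(\cA_{1})\incl\H_{1}$, and $N_{0,1}\incl\H_{1}$ are compact, the last forcing $\dim N_{0,1}<\infty$. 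In particular, by Remark \ref{fatbr} all four ranges $R(\A_{0}),R(\A_{0}^{*}),R(\A_{1}),R(\A_{1}^{*})$ are closed, so the decomposition above holds with closed summands and the domain splittings of \eqref{helmrefinedthree} are available.

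Next I would verify that the three summands reduce $L$. For $x=x_{0}+x_{h}+x_{1}$ along the decomposition, the domain splittings $D(\A_{0}^{*})=D(\cA_{0}^{*})\oplus_{\H_{1}}N_{0,1}\oplus_{\H_{1}}\ol{R(\A_{1}^{*})}$ and $D(\A_{1})=\ol{R(\A_{0})}\oplus_{\H_{1}}N_{0,1}\oplus_{\H_{1}}D(\cA_{1})$ from \eqref{helmrefinedthree}, together with the fact that $\A_{0}^{*}$ annihilates $N(\A_{0}^{*})=N_{0,1}\oplus_{\H_{1}}\ol{R(\A_{1}^{*})}$ and $\A_{1}$ annihilates $N(\A_{1})=\ol{R(\A_{0})}\oplus_{\H_{1}}N_{0,1}$ (first line of \eqref{helmrefinedone}), give $\A_{0}^{*}x=\cA_{0}^{*}x_{0}$ and $\A_{1}x=\cA_{1}x_{1}$. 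Applying $\A_{0}$ and $\A_{1}^{*}$ then yields
$$L x=\rho\,\cA_{0}\cA_{0}^{*}x_{0}+\cA_{1}^{*}\cA_{1}x_{1},\qquad
D(L)=D(\cA_{0}\cA_{0}^{*})\oplus_{\H_{1}}N_{0,1}\oplus_{\H_{1}}D(\cA_{1}^{*}\cA_{1}),$$
the two terms of $Lx$ lying in the orthogonal subspaces $\ol{R(\A_{0})}$ and $\ol{R(\A_{1}^{*})}$. Since $\cA_{0}\cA_{0}^{*}$ and $\cA_{1}^{*}\cA_{1}$ are self-adjoint and non-negative by \eqref{eq:saops}, and the zero operator on the finite-dimensional space $N_{0,1}$ trivially is, the orthogonal direct sum $L$ is self-adjoint and non-negative.

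For the spectrum I would apply Lemma \ref{lemconstev} separately to the dual pairs $(\A_{0},\A_{0}^{*})$ and $(\A_{1},\A_{1}^{*})$, whose reduced domains embed compactly by the first step. This produces pure discrete point spectra $\sigma(\cA_{0}\cA_{0}^{*})=\{\lambda_{0,n}\}_{n}\subset(0,\infty)$ and $\sigma(\cA_{1}^{*}\cA_{1})=\{\lambda_{1,n}\}_{n}\subset(0,\infty)$ with $\lambda_{\ell,n}\to\infty$ and finite-dimensional eigenspaces, together with the identifications $\sigma(\cA_{\ell}\cA_{\ell}^{*})=\sigma(\A_{\ell}\A_{\ell}^{*})\setminus\{0\}=\sigma(\A_{\ell}^{*}\A_{\ell})\setminus\{0\}$. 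Because the spectrum of an orthogonal direct sum is the union of the spectra of the summands, I obtain $\sigma(L)\setminus\{0\}=\rho\{\lambda_{0,n}\}_{n}\cup\{\lambda_{1,n}\}_{n}$, with no accumulation point since both sequences diverge; an eigenspace of $L$ is the orthogonal sum of the relevant finite-dimensional eigenspaces of the two pieces, hence finite-dimensional.

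Finally, restricting to $N_{0,1}^{\bot_{\H_{1}}}=\ol{R(\A_{0})}\oplus_{\H_{1}}\ol{R(\A_{1}^{*})}$ removes the kernel part and leaves $L=\rho\,\cA_{0}\cA_{0}^{*}\oplus\cA_{1}^{*}\cA_{1}$. Since the spectra of both pieces lie in $(0,\infty)$ and are bounded away from $0$ (by $\lambda_{0,1},\lambda_{1,1}>0$ via Lemma \ref{fatbl1}), each is a self-adjoint operator with bounded inverse, hence a topological isomorphism onto its summand; their direct sum is therefore the asserted topological isomorphism. The only genuinely delicate point, and the step I expect to be the main obstacle, is the domain bookkeeping in the reduction: one must check that $D(L)=D(\A_{1}^{*}\A_{1})\cap D(\A_{0}\A_{0}^{*})$ splits \emph{exactly} along the Helmholtz decomposition, so that $L$ is truly an orthogonal direct sum rather than merely a sum of two unbounded self-adjoint operators (which in general need not be self-adjoint). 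This is precisely where the complex property $\A_{1}\A_{0}=0$ is essential, as it decouples the two summands onto mutually orthogonal reducing subspaces.
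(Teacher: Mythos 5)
Your argument is correct: reducing $\rho\A_{0}\A_{0}^{*}+\A_{1}^{*}\A_{1}$ by the refined Helmholtz decompositions \eqref{helmrefinedtwo} and \eqref{helmrefinedthree} to the orthogonal direct sum $\rho\,\cA_{0}\cA_{0}^{*}\oplus 0\oplus\cA_{1}^{*}\cA_{1}$ on $\ol{R(\A_{0})}\oplus_{\H_{1}}N_{0,1}\oplus_{\H_{1}}\ol{R(\A_{1}^{*})}$ and then invoking Lemma \ref{lemconstev} for each dual pair is precisely the standard FA-ToolBox argument, and you rightly single out the exact domain splitting $D(\rho\A_{0}\A_{0}^{*}+\A_{1}^{*}\A_{1})=D(\cA_{0}\cA_{0}^{*})\oplus_{\H_{1}}N_{0,1}\oplus_{\H_{1}}D(\cA_{1}^{*}\cA_{1})$, forced by the complex property, as the one step that must be checked. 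The paper states this lemma without proof (citing the FA-ToolBox references), but Remark \ref{lemconstevcomplexremhelm} exploits exactly this block structure, so your route coincides with the intended one.
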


\begin{rem}[Helmholtz decomposition]
\label{lemconstevcomplexremhelm}
Let $\rho=1$. Then
$\A_{0}\A_{0}^{*}+\A_{1}^{*}\A_{1}$ provides the Helmholtz decomposition from Lemma \ref{lem:compemblem}.
To see this, let us denote the orthonormal projector onto the cohomology group $N_{0,1}$ by
$\pi_{N_{0,1}}:\H_{1}\to N_{0,1}$. Then, for $x\in\H_{1}$ we have
$(1-\pi_{N_{0,1}})x\in N_{0,1}^{\bot_{\H_{1}}}$ and
\begin{align*}
x&=\pi_{N_{0,1}}x+(1-\pi_{N_{0,1}})x\\
&=\pi_{N_{0,1}}x+(\A_{0}\A_{0}^{*}+\A_{1}^{*}\A_{1})(\A_{0}\A_{0}^{*}+\A_{1}^{*}\A_{1})^{-1}(1-\pi_{N_{0,1}})x
\in N_{0,1}\oplus_{\H_{1}}R(\A_{0})\oplus_{\H_{1}}R(\A_{1}^{*}).
\end{align*}
\end{rem}

\subsection{De Rham Complex}
\label{sec:derhamcomplex}

In this subsection, let additionally $(\om,\gat)$ be a weak Lipschitz pair. 
Let us consider the densely defined and closed (unbounded) linear operators
\begin{align*}
\A_{0}:=\na_{\gat}:\H^{1}_{\gat}(\om)\subset\L^{2}_{\nu}(\om)&\to\L^{2}_{\eps}(\om),\\
\A_{1}:=\mu^{-1}\rot_{\gat}:\R_{\gat}(\om)\subset\L^{2}_{\eps}(\om)&\to\L^{2}_{\mu}(\om),\\
\A_{2}:=\kappa^{-1}\div_{\gat}\mu:\mu^{-1}\D_{\gat}(\om)\subset\L^{2}_{\mu}(\om)&\to\L^{2}_{\kappa}(\om)
\intertext{together with their densely defined and closed (unbounded) adjoints}
\A_{0}^{*}=-\nu^{-1}\div_{\gan}\eps:\eps^{-1}\D_{\gan}(\om)\subset\L^{2}_{\eps}(\om)&\to\L^{2}_{\nu}(\om),\\
\A_{1}^{*}=\eps^{-1}\rot_{\gan}:\R_{\gan}(\om)\subset\L^{2}_{\mu}(\om)&\to\L^{2}_{\eps}(\om),\\
\A_{2}^{*}=-\na_{\gan}:\H^{1}_{\gan}(\om)\subset\L^{2}_{\kappa}(\om)&\to\L^{2}_{\mu}(\om),
\end{align*}
where we introduce the weighted Lebesgue space $\L^{2}_{\eps}(\om)$ 
as $\L^{2}(\om)$ equipped with the weighted and equivalent inner product
$$\scp{\,\cdot\,}{\,\cdot\,}_{\L^{2}_{\eps}(\om)}
:=\scp{\eps\,\cdot\,}{\,\cdot\,}_{\L^{2}(\om)}$$
(same for $\mu$, $\nu$, and $\kappa$).
For the adjoints we refer to \cite{BPS2016a} and \cite{PS2022a} (weak and strong boundary conditions coincide). 
Recall that $\A_{\ell}^{**}=\ol{\A_{\ell}}=\A_{\ell}$ and
that $(\A_{\ell},\A_{\ell}^{*})$ are dual pairs.

\begin{rem}
\label{rem:derham1}
The latter operators satisfy the complex properties 
$R(\A_{\ell})\subset N(\A_{\ell+1})$ and build the well known de Rham Hilbert complex
$$\small
\def\arrowlength{12ex}
\def\arrowdistance{.8}
\begin{tikzcd}[column sep=\arrowlength]
N(\A_{0})
\arrow[r, rightarrow, shift left=\arrowdistance, "\A_{-1}:=\iota_{N(\A_{0})}"] 
\arrow[r, leftarrow, shift right=\arrowdistance, "\A_{-1}^{*}=\pi_{N(\A_{0})}"']
& 
\L^{2}_{\nu}(\om)
\ar[r, rightarrow, shift left=\arrowdistance, "\A_{0}=\na_{\gat}"] 
\ar[r, leftarrow, shift right=\arrowdistance, "\A_{0}^{*}=-\nu^{-1}\div_{\gan}\eps"']
&
%[-1em]
\L^{2}_{\eps}(\om)
\arrow[r, rightarrow, shift left=\arrowdistance, "\A_{1}=\mu^{-1}\rot_{\gat}"] 
\arrow[r, leftarrow, shift right=\arrowdistance, "\A_{1}^{*}=\eps^{-1}\rot_{\gan}"']
& 
%[-1em]
\L^{2}_{\mu}(\om)
\arrow[r, rightarrow, shift left=\arrowdistance, "\A_{2}=\kappa^{-1}\div_{\gat}\mu"] 
\arrow[r, leftarrow, shift right=\arrowdistance, "\A_{2}^{*}=-\na_{\gan}"']
&
%[-1em]
\L^{2}_{\kappa}(\om)
\arrow[r, rightarrow, shift left=\arrowdistance, "\A_{3}:=\pi_{N(\A_{2}^{*})}"] 
\arrow[r, leftarrow, shift right=\arrowdistance, "\A_{3}^{*}=\iota_{N(\A_{2}^{*})}"']
&
%[-.5em]
N(\A_{2}^{*}).
\end{tikzcd}$$
Here, $\iota$ and $\pi$ denote the cannocial embedding and the orthogonal projector,
respectively.
\end{rem}

\begin{theo}[Weck's selection theorem] 
\label{theo:maxcpt}
%Let $(\om,\gat)$ be additionally a weak Lipschitz pair. Then 
The embedding
$$D(\A_{1})\cap D(\A_{0}^{*})
=\R_{\gat}(\om)\cap\eps^{-1}\D_{\gan}(\om)
\incl\L^{2}_{\eps}(\om)=\H_{1}$$
is compact.
\end{theo}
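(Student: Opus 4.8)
The plan is to prove the compact embedding by the classical localization-and-transformation technique (in the spirit of Picard, Weck, and Witsch), reducing the problem to simple reference configurations where compactness follows from reflection and the ordinary Rellich selection theorem. So first I would fix a bounded sequence $(E_{n})$ in $\R_{\gat}(\om)\cap\eps^{-1}\D_{\gan}(\om)$ and aim to extract an $\L^{2}(\om)$-convergent subsequence; since $\eps$ is admissible, boundedness here just means that $(E_{n})$, $(\rot E_{n})$, and $(\div\eps E_{n})$ are bounded in $\L^{2}(\om)$. Because $\omb$ is compact and $(\om,\gat)$ is a weak Lipschitz pair, I would cover $\omb$ by finitely many open sets $U_{1},\dots,U_{N}$ so that, after a boundary-straightening transformation $\Phi_{j}\in\cL(U_{j}\cap\om)$, each patch is of one of three types: an interior patch with $U_{j}\subset\om$, a boundary patch meeting only $\gat$ or only $\gan$, or an interface patch meeting $\ol{\gat}\cap\ol{\gan}$. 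Choosing a subordinate Lipschitz partition of unity $\{\varphi_{j}\}$, it then suffices to extract a convergent subsequence from each $(\varphi_{j}E_{n})_{n}$.

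Next, the product rules
$$\rot(\varphi_{j}E_{n})=\varphi_{j}\rot E_{n}+\na\varphi_{j}\times E_{n},\qquad\div\eps(\varphi_{j}E_{n})=\varphi_{j}\div\eps E_{n}+\na\varphi_{j}\cdot\eps E_{n}$$
show that $(\varphi_{j}E_{n})_{n}$ stays bounded in the local space $\R_{\gat}(\om)\cap\eps^{-1}\D_{\gan}(\om)$, the scalar cutoff leaving the boundary conditions intact. Using the pullbacks $\tau^{1}_{\Phi_{j}}$ from Theorem \ref{theo:transtheo} and Corollary \ref{cor:transtheo}, which are topological isomorphisms intertwining $\rot$, $\div\eps$, and the mixed boundary conditions, I would transport each boundary patch to its reference box; an $\L^{2}$-convergent subsequence on the reference box then pulls back to one on $U_{j}\cap\om$, so the entire problem is reduced to the reference configuration.

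On the flat reference configuration the strategy is to extend the transformed fields across the flat boundary faces by suitable componentwise even/odd reflections — reflections preserving $\R_{\ga}$ across the Dirichlet faces and $\D_{\ga}$ across the Neumann faces — thereby reducing to the full-boundary or no-boundary situation. There the Gaffney estimate recovers the missing derivatives, so the extended fields are bounded in $\bH^{1}$ and the classical Rellich theorem yields a convergent subsequence; for the interior and pure-Dirichlet/pure-Neumann patches this completes the argument directly.

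The hard part will be the interface patches, where $\ol{\gat}$ and $\ol{\gan}$ meet along a merely Lipschitz seam: no single reflection flattens both boundary conditions simultaneously, and the low (only Lipschitz) regularity rules out classical elliptic regularity. This is exactly the delicate point settled in \cite{BPS2016a,PS2022a}, and I would follow their treatment — an additional bi-Lipschitz change of variables straightening the interface, combined with the weak-equals-strong identification of Lemma \ref{lem:densebc}, which allows the mixed boundary conditions to be characterised by testing against smooth fields and hence to be preserved under all transformations and reflections used above.
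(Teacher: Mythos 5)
The paper does not actually prove Theorem \ref{theo:maxcpt}: it is an imported result, with the proof delegated entirely to \cite{BPS2016a} and \cite{PS2022a}. Your outline correctly reproduces the classical localization--transformation--reflection strategy that those references (and Weck's and Picard's original work) follow, so at the level of strategy you are on the right track; but as a proof it has a genuine gap at the decisive step, and the gap is not confined to the interface patches you concede to the literature.

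The flawed step is the claim that on the flat reference configuration ``the Gaffney estimate recovers the missing derivatives, so the extended fields are bounded in $\bH^{1}$.'' After pulling back with a merely bi-Lipschitz chart $\Phi_{j}$ --- and bi-Lipschitz regularity is all that a weak Lipschitz pair provides --- the transported field lies in $\R_{\gat}\cap\eps_{\Phi_{j}}^{-1}\D_{\gan}$ of the reference box with $\eps_{\Phi_{j}}=(\det J_{\Phi_{j}})J_{\Phi_{j}}^{-1}\wt{\eps}J_{\Phi_{j}}^{-\top}$ only in $\L^{\infty}$; what is controlled is $\div(\eps_{\Phi_{j}}E)$, not $\div E$. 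No Gaffney inequality and no $\bH^{1}$-regularity is available for such rough coefficients (nor on non-convex Lipschitz domains even for $\eps=\id$), which is precisely why \cite{W1974a,P1984a,BPS2016a} avoid $\bH^{1}$ altogether and instead combine Helmholtz-type decompositions with the nontrivial lemma that the compact embedding is independent of the admissible coefficient --- a tool your sketch never invokes. Since this affects every boundary patch, not only the seams where $\ol{\gat}$ and $\ol{\gan}$ meet, the portion of the argument you actually carry out does not close, and the portion you defer to \cite{BPS2016a,PS2022a} is in substance the whole theorem --- which is exactly what the paper itself does by citation.
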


A proof can be found in \cite{BPS2016a} and \cite{PS2022a}.

\begin{rem}[Weck's selection theorem]
\label{rem:maxcpttheo}
Note that by Theorem \ref{theo:maxcpt} also
$$D(\A_{2})\cap D(\A_{1}^{*})
=\mu^{-1}\D_{\gat}(\om)\cap\R_{\gan}(\om)
\incl\L^{2}_{\mu}(\om)=\H_{2}$$
is compact. Moreover, $D(\A_{0})=\H^{1}_{\gat}(\om)\incl\L^{2}_{\nu}(\om)=\H_{0}$
and $D(\A_{2}^{*})=\H^{1}_{\gan}(\om)\incl\L^{2}_{\kappa}(\om)=\H_{3}$ are trivially compact
by Rellich's selection theorem.
The first compact embedding result for non-smooth domains, i.e.,
for piecewise smooth and globally strong Lipschitz boundaries and full boundary conditions, 
was given by Weck \cite{W1974a}. 
First results for strong Lipschitz boundaries and full boundary conditions  
had been proved in \cite{P1984a,W1980a}. First results for strong Lipschitz boundaries 
and mixed boundary conditions can be found in \cite{FG1997a,J1997a}.
\end{rem}

Theorem \ref{theo:maxcpt} together with Lemma \ref{compemblem} shows that    
$D(\cA_{1})\incl\H_{1}$ and $D(\cA_{0}),D(\A_{0})\incl\H_{1}$  are compact.
Hence by Lemma \ref{lemconstev} we see that 
\begin{align}
\label{eq:AsAsa1}
\begin{aligned}
\A_{0}^{*}\A_{0}&=-\nu^{-1}\div_{\gan}\eps\na_{\gat},
&
\A_{1}^{*}\A_{1}&=\eps^{-1}\rot_{\gan}\mu^{-1}\rot_{\gat},\\
\A_{0}\A_{0}^{*}&=-\na_{\gat}\nu^{-1}\div_{\gan}\eps,
&
\A_{1}\A_{1}^{*}&=\mu^{-1}\rot_{\gat}\eps^{-1}\rot_{\gan}
\end{aligned}
\end{align}
are self-adjoint, non-negative, and
have pure and discrete point spectrum with no accumulation point. Moreover, as 
$$\sigma(\A_{\ell}^{*}\A_{\ell})\setminus\{0\}
=\sigma(\cA_{\ell}^{*}\cA_{\ell})=\sigma(\cA_{\ell}\cA_{\ell}^{*})
=\sigma(\A_{\ell}\A_{\ell}^{*})\setminus\{0\}\subset(0,\infty),\qquad
\ell\in\{0,1\},$$ 
we get:

\begin{theo}[eigenvalues of the de Rham complex]
\label{theo:evderham}
It holds
\begin{align*}
\sigma(\mu^{-1}\rot_{\gat}\eps^{-1}\rot_{\gan})\setminus\{0\}
=\sigma(\eps^{-1}\rot_{\gan}\mu^{-1}\rot_{\gat})\setminus\{0\}
&=\{\lambda_{1,k}\}_{k\in\nat}
\subset(0,\infty),\\
\sigma(-\na_{\gat}\nu^{-1}\div_{\gan}\eps)\setminus\{0\}
=\sigma(-\nu^{-1}\div_{\gan}\eps\na_{\gat})\setminus\{0\}
&=\{\lambda_{0,k}\}_{k\in\nat}
\subset(0,\infty)
\end{align*}
with eigenvalues 
$0<\lambda_{\ell,1}\leq\lambda_{\ell,2}\leq\dots\leq\lambda_{\ell,k-1}\leq\lambda_{\ell,k}\leq\dots\to\infty$
for $\ell\in\{0,1\}$.
Only finitely many eigenvalues coincide  and they are repeated according to their multiplicity.
%For all $\lambda_{\ell,n}$ the eigenspaces are finite dimensional.
\end{theo}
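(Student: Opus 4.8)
The plan is to recognise the statement as nothing more than the specialisation of the abstract spectral Lemma~\ref{lemconstev} to the two dual pairs $(\A_0,\A_0^*)$ and $(\A_1,\A_1^*)$ of the de Rham complex, the only genuine input being a compact embedding. Accordingly, I would first secure, for each $\ell\in\{0,1\}$, the compactness of $D(\cA_\ell)\incl\H_\ell$. For $\ell=1$ this is the delicate step: Weck's selection theorem (Theorem~\ref{theo:maxcpt}) asserts compactness of $D(\A_1)\cap D(\A_0^*)=\R_{\gat}(\om)\cap\eps^{-1}\D_{\gan}(\om)\incl\L^2_\eps(\om)$, and feeding this into the equivalence of Lemma~\ref{compemblem} (condition (ii) implies condition (i)) yields compactness of $D(\cA_0)\incl\H_0$, $D(\cA_1)\incl\H_1$, and $N_{0,1}\incl\H_1$ simultaneously. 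For $\ell=0$ one may argue more cheaply: $D(\A_0)=\H^1_{\gat}(\om)\incl\L^2_\nu(\om)$ is compact by Rellich's theorem (cf.\ Remark~\ref{rem:maxcpttheo}), hence so is the smaller domain $D(\cA_0)$.

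Next I would invoke Lemma~\ref{lemconstev} once for $\ell=0$ and once for $\ell=1$. For each fixed $\ell$, the compactness of $D(\cA_\ell)\incl\H_\ell$ established above is exactly the hypothesis of that lemma, so it delivers at once that $\A_\ell^*\A_\ell$ and $\A_\ell\A_\ell^*$ are self-adjoint and non-negative with pure, discrete point spectrum accumulating only at infinity, that all eigenspaces are finite dimensional, and that
$$\sigma(\A_\ell^*\A_\ell)\setminus\{0\}=\sigma(\cA_\ell^*\cA_\ell)=\sigma(\cA_\ell\cA_\ell^*)=\sigma(\A_\ell\A_\ell^*)\setminus\{0\}=\{\lambda_{\ell,n}\}_{n\in\nat}\subset(0,\infty),$$
with the eigenvalues forming a strictly increasing sequence $0<\lambda_{\ell,1}<\lambda_{\ell,2}<\dots\to\infty$.

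Finally I would read off the claimed spectra by matching the abstract operators with the concrete ones recorded in \eqref{eq:AsAsa1}. Taking $\ell=1$ identifies $\A_1^*\A_1=\eps^{-1}\rot_{\gan}\mu^{-1}\rot_{\gat}$ and $\A_1\A_1^*=\mu^{-1}\rot_{\gat}\eps^{-1}\rot_{\gan}$, whose common nonzero spectrum is $\{\lambda_{1,n}\}_{n\in\nat}$, giving the first displayed line; taking $\ell=0$ identifies $\A_0^*\A_0=-\nu^{-1}\div_{\gan}\eps\na_{\gat}$ and $\A_0\A_0^*=-\na_{\gat}\nu^{-1}\div_{\gan}\eps$, whose common nonzero spectrum is $\{\lambda_{0,n}\}_{n\in\nat}$, giving the second. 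The coincidence of the two spectra within each line is precisely the outer equality in the chain of identities above, so no further computation is required.

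I do not expect any serious obstacle, since the entire functional-analytic machinery has already been assembled; the one nontrivial fact, namely the compactness of the $\rot$/$\div$ embedding under mixed boundary conditions, is exactly what Theorem~\ref{theo:maxcpt} supplies, and everything else is a bookkeeping application of Lemmas~\ref{compemblem} and~\ref{lemconstev} together with the operator identifications in \eqref{eq:AsAsa1}.
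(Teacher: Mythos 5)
Your proposal is correct and follows essentially the same route as the paper: the text preceding Theorem \ref{theo:evderham} derives it exactly by combining Weck's selection theorem (Theorem \ref{theo:maxcpt}) with Lemma \ref{compemblem} to get compactness of $D(\cA_{\ell})\incl\H_{\ell}$ (with Rellich covering $\ell=0$, cf.\ Remark \ref{rem:maxcpttheo}), and then applying Lemma \ref{lemconstev} and the identifications in \eqref{eq:AsAsa1}. No gaps.
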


For the generalised Laplacian 
$$\rho\A_{0}\A_{0}^{*}+\A_{1}^{*}\A_{1}
=-\rho\na_{\gat}\nu^{-1}\div_{\gan}\eps
+\eps^{-1}\rot_{\gan}\mu^{-1}\rot_{\gat}$$
we have the following result:

\begin{theo}[eigenvalues of the generalised Laplacian]
\label{theo:evlap}
$\eps^{-1}\rot_{\gan}\mu^{-1}\rot_{\gat}
-\rho\na_{\gat}\nu^{-1}\div_{\gan}\eps$
is self-adjoint, non-negative, and has pure and discrete point spectrum with no accumulation point, i.e.,
\begin{align*}
&\qquad\sigma(-\rho\na_{\gat}\nu^{-1}\div_{\gan}\eps
+\eps^{-1}\rot_{\gan}\mu^{-1}\rot_{\gat})\setminus\{0\}\\
&=\big(\rho\sigma(-\na_{\gat}\nu^{-1}\div_{\gan}\eps)\setminus\{0\}\big)
\cup\sigma\big(\eps^{-1}\rot_{\gan}\mu^{-1}\rot_{\gat})\setminus\{0\}\big)
=\rho\{\lambda_{0,k}\}_{k\in\nat}\cup\{\lambda_{1,k}\}_{k\in\nat}. 
\end{align*}
Only finitely many eigenvalues coincide and they  are repeated according to their multiplicity. 
\end{theo}

\section{Eigenvalues}
\label{eigensec}

Let 
$$\Phi:\om\to\omp=\Phi(\om)$$ 
be a bi-Lipschitz transformation.
We are interested in the eigenvalue problems \eqref{eq:maxev1}-\eqref{eq:lapev3},
in particular, in the dependence of the eigenvalues 
and related symmetric functions on the domain $\om$,
more precisely on the domain $\omp$, when $\Phi$ is varying.
For this, we consider unbounded linear operators of the de Rham complex
in $\L^{2}(\omp)$ together with their Lipschitz transformed relatives in $\L^{2}(\om)$.

From now on, let additionally $(\om,\gat)$ be a weak Lipschitz pair.

\subsection{Operators of the De Rham Complex}
\label{sec:derhamop}

Let us define the densely defined and closed (unbounded) linear operators
\begin{align*}
\A_{0,\Phi}:=\na_{\gatp}:\H^{1}_{\gatp}(\omp)\subset\L^{2}_{\nu}(\omp)&\to\L^{2}_{\eps}(\omp),\\
\A_{1,\Phi}:=\mu^{-1}\rot_{\gatp}:\R_{\gatp}(\omp)\subset\L^{2}_{\eps}(\omp)&\to\L^{2}_{\mu}(\omp)
\intertext{together with their densely defined and closed (unbounded) adjoints}
\A_{0,\Phi}^{*}=-\nu^{-1}\div_{\ganp}\eps:\eps^{-1}\D_{\ganp}(\omp)\subset\L^{2}_{\eps}(\omp)&\to\L^{2}_{\nu}(\omp),\\
\A_{1,\Phi}^{*}=\eps^{-1}\rot_{\ganp}:\R_{\ganp}(\omp)\subset\L^{2}_{\mu}(\omp)&\to\L^{2}_{\eps}(\omp).
\end{align*}
Recall that $(\A_{\ell,\Phi},\A_{\ell,\Phi}^{*})$ are dual pairs. Moreover, let
\begin{align*}
\A_{0}:=\na_{\gat}:\H^{1}_{\gat}(\om)\subset\L^{2}_{\nu_{\Phi}}(\om)&\to\L^{2}_{\eps_{\Phi}}(\om),
&
\eps_{\Phi}&:=\tau^{2}_{\Phi}\eps\tau^{1}_{\Phi^{-1}}
=(\det J_{\Phi})J_{\Phi}^{-1}\wt{\eps}J_{\Phi}^{-\top},\\
\A_{1}:=\mu_{\Phi}^{-1}\rot_{\gat}:\R_{\gat}(\om)\subset\L^{2}_{\eps_{\Phi}}(\om)&\to\L^{2}_{\mu_{\Phi}}(\om),
&
\mu_{\Phi}&:=\tau^{2}_{\Phi}\mu\tau^{1}_{\Phi^{-1}}
=(\det J_{\Phi})J_{\Phi}^{-1}\wt{\mu}J_{\Phi}^{-\top},\\
\A_{0}^{*}=-\nu_{\Phi}^{-1}\div_{\gan}\eps_{\Phi}:\eps_{\Phi}^{-1}\D_{\gan}(\om)\subset\L^{2}_{\eps_{\Phi}}(\om)
&\to\L^{2}_{\nu_{\Phi}}(\om),
&
\nu_{\Phi}&:=\tau^{3}_{\Phi}\nu\tau^{0}_{\Phi^{-1}}
=(\det J_{\Phi})\wt{\nu},\\
\A_{1}^{*}=\eps_{\Phi}^{-1}\rot_{\gan}:\R_{\gan}(\om)\subset\L^{2}_{\mu_{\Phi}}(\om)
&\to\L^{2}_{\eps_{\Phi}}(\om),
\end{align*}
which are also densely defined and closed (unbounded) linear operators.
Again, $(\A_{\ell},\A_{\ell}^{*})$ are dual pairs.
Note that the inner products in the weighted Lebesgue spaces
$\L^{2}_{\eps_{\Phi}}(\om)$, $\L^{2}_{\mu_{\Phi}}(\om)$, and $\L^{2}_{\nu_{\Phi}}(\om)$
read explicitely 
\begin{align*}
\scp{\,\cdot\,}{\,\cdot\,}_{\L^{2}_{\eps_{\Phi}}(\om)}
&=\scp{\eps_{\Phi}\,\cdot\,}{\,\cdot\,}_{\L^{2}(\om)}
=\bscp{(\det J_{\Phi})\wt{\eps}J_{\Phi}^{-\top}\,\cdot\,}{J_{\Phi}^{-\top}\,\cdot\,}_{\L^{2}(\om)},\\
\scp{\,\cdot\,}{\,\cdot\,}_{\L^{2}_{\mu_{\Phi}}(\om)}
&=\scp{\mu_{\Phi}\,\cdot\,}{\,\cdot\,}_{\L^{2}(\om)}
=\bscp{(\det J_{\Phi})\wt{\mu}J_{\Phi}^{-\top}\,\cdot\,}{J_{\Phi}^{-\top}\,\cdot\,}_{\L^{2}(\om)},\\
\scp{\,\cdot\,}{\,\cdot\,}_{\L^{2}_{\nu_{\Phi}}(\om)}
&=\scp{\nu_{\Phi}\,\cdot\,}{\,\cdot\,}_{\L^{2}(\om)}
=\bscp{(\det J_{\Phi})\wt{\nu}\,\cdot\,}{\,\cdot\,}_{\L^{2}(\om)},
\end{align*}
and that $\eps_{\Phi}$, $\mu_{\Phi}$, and $\nu_{\Phi}$ are admissible transformations. 
Note that, here, the operators $\A_{2,\Phi}$ and $\A_{2}$ are not needed due to their equivalence 
to $\A_{0,\Phi}^{*}$ and $\A_{0}^{*}$.

\subsection{Unitary Equivalence and Spectrum}
\label{sec:uniequispectrum}

Utilising the pullbacks $\tau^{q}_{\Phi}$ of $\Phi:\om\to\Phi(\om)=\omp$
from Theorem \ref{theo:transtheo} and Corollay \ref{cor:transtheo} 
and the corresponding inverse pullbacks $(\tau^{q}_{\Phi})^{-1}=\tau^{q}_{\Phi^{-1}}$
we compute
\begin{align*}
\tau^{1}_{\Phi}\A_{0,\Phi}u
&=\tau^{1}_{\Phi}\na_{\gatp}u
=\na_{\gat}\tau^{0}_{\Phi}u
=\A_{0}\tau^{0}_{\Phi}u,\\
\tau^{1}_{\Phi}\A_{1,\Phi}E
&=\tau^{1}_{\Phi}\mu^{-1}\rot_{\gatp}E
=\tau^{1}_{\Phi}\mu^{-1}\tau^{2}_{\Phi^{-1}}\tau^{2}_{\Phi}\rot_{\gatp}E
=\mu_{\Phi}^{-1}\rot_{\gat}\tau^{1}_{\Phi}E
=\A_{1}\tau^{1}_{\Phi}E,\\
\tau^{0}_{\Phi}\A_{0,\Phi}^{*}H
&=-\tau^{0}_{\Phi}\nu^{-1}\tau^{3}_{\Phi^{-1}}\tau^{3}_{\Phi}\div_{\ganp}\eps H
=-\nu_{\Phi}^{-1}\div_{\gan}\tau^{2}_{\Phi}\eps H
=-\nu_{\Phi}^{-1}\div_{\gan}\eps_{\Phi}\tau^{1}_{\Phi}H
=\A_{0}^{*}\tau^{1}_{\Phi}H,
\end{align*}
and obtain by symmetry the following result.

\begin{lem}
\label{lem:unitequi1}
It holds
\begin{align*}
\A_{0,\Phi}
&=\tau^{1}_{\Phi^{-1}}\A_{0}\tau^{0}_{\Phi},
&
\A_{1,\Phi}
&=\tau^{1}_{\Phi^{-1}}\A_{1}\tau^{1}_{\Phi},\\
\A_{0,\Phi}^{*}
&=\tau^{0}_{\Phi^{-1}}\A_{0}^{*}\tau^{1}_{\Phi},
&
\A_{1,\Phi}^{*}
&=\tau^{1}_{\Phi^{-1}}\A_{1}^{*}\tau^{1}_{\Phi}.
\end{align*}
\end{lem}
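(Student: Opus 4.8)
The plan is to read the first three operator identities directly off the computations carried out immediately before the statement, and then to obtain the fourth by the symmetric computation. Throughout I keep in mind that all four assertions are identities of \emph{unbounded} operators, so each includes an equality of domains; these domain equalities will come for free from the isomorphism properties of the pullbacks.

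First I would record the three intertwining relations already established, namely $\tau^{1}_{\Phi}\A_{0,\Phi}=\A_{0}\tau^{0}_{\Phi}$ on $\H^{1}_{\gatp}(\omp)$, $\tau^{1}_{\Phi}\A_{1,\Phi}=\A_{1}\tau^{1}_{\Phi}$ on $\R_{\gatp}(\omp)$, and $\tau^{0}_{\Phi}\A_{0,\Phi}^{*}=\A_{0}^{*}\tau^{1}_{\Phi}$ on $\eps^{-1}\D_{\ganp}(\omp)$. By Theorem~\ref{theo:transtheo} and Corollary~\ref{cor:transtheo} each $\tau^{q}_{\Phi}$ is a topological isomorphism between the relevant (weighted) spaces with inverse $\tau^{q}_{\Phi^{-1}}$; composing each relation on the left with the appropriate inverse pullback ($\tau^{1}_{\Phi^{-1}}$, $\tau^{1}_{\Phi^{-1}}$, $\tau^{0}_{\Phi^{-1}}$, respectively) immediately yields the first three claimed identities, the equality of domains being part of the isomorphism statement.

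For the fourth identity I would run the computation symmetric to the one for $\A_{1,\Phi}$. Starting from $B\in\R_{\ganp}(\omp)$ I insert $\tau^{2}_{\Phi^{-1}}\tau^{2}_{\Phi}=\id$ between the weight and the differential operator and then use $\tau^{2}_{\Phi}\rot_{\ganp}=\rot_{\gan}\tau^{1}_{\Phi}$ (the degree shift from Theorem~\ref{theo:transtheo}, applied with boundary part $\gan$) together with $\eps_{\Phi}^{-1}=\tau^{1}_{\Phi}\eps^{-1}\tau^{2}_{\Phi^{-1}}$ read off from the definition of $\eps_{\Phi}$ in Corollary~\ref{cor:transtheo}. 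This gives $\tau^{1}_{\Phi}\A_{1,\Phi}^{*}B=\tau^{1}_{\Phi}\eps^{-1}\tau^{2}_{\Phi^{-1}}\tau^{2}_{\Phi}\rot_{\ganp}B=\eps_{\Phi}^{-1}\rot_{\gan}\tau^{1}_{\Phi}B=\A_{1}^{*}\tau^{1}_{\Phi}B$, and composing on the left with $\tau^{1}_{\Phi^{-1}}$ delivers $\A_{1,\Phi}^{*}=\tau^{1}_{\Phi^{-1}}\A_{1}^{*}\tau^{1}_{\Phi}$.

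The work here is entirely bookkeeping rather than a genuine obstacle: one must keep track of the correct pullback degree in each slot (degrees $0,1,2$ acting on the scalar, the $\rot$-proxy, and the $\div$-proxy) and of the correct boundary part ($\gatp$ versus $\ganp$), and one must invoke Corollary~\ref{cor:transtheo} --- not the bare transformation theorem --- wherever the weight $\eps$ and the datum $\div\eps$ enter, so that the weighted space $\L^{2}_{\eps_{\Phi}}(\om)$ and the matrix field $\eps_{\Phi}$ appear consistently. Once the degrees and boundary labels are matched, each line is a single substitution, so I expect no difficulty beyond this careful labelling.
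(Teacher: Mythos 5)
Your proposal matches the paper's proof: the paper establishes exactly the three intertwining relations $\tau^{1}_{\Phi}\A_{0,\Phi}=\A_{0}\tau^{0}_{\Phi}$, $\tau^{1}_{\Phi}\A_{1,\Phi}=\A_{1}\tau^{1}_{\Phi}$, $\tau^{0}_{\Phi}\A_{0,\Phi}^{*}=\A_{0}^{*}\tau^{1}_{\Phi}$ by the same insert-$\tau^{q}_{\Phi^{-1}}\tau^{q}_{\Phi}$ substitutions and then obtains the fourth ``by symmetry'', which is precisely the computation you write out for $\A_{1,\Phi}^{*}$. Your additional remarks on domain equality via the isomorphism properties of the pullbacks are correct and consistent with Theorem~\ref{theo:transtheo} and Corollary~\ref{cor:transtheo}.
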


\begin{rem}
\label{rem:unitequi1}
We emphasise that the adjoints of the pullbacks in Lemma \ref{lem:unitequi1} are given by
$$(\tau^{0}_{\Phi})^{*}=\tau^{0}_{\Phi^{-1}}=(\tau^{0}_{\Phi})^{-1},\qquad
(\tau^{1}_{\Phi})^{*}=\tau^{1}_{\Phi^{-1}}=(\tau^{1}_{\Phi})^{-1},$$
apparently in contradiction to the adjoints from Theorem \ref{theo:transtheo}.
This is due to the formulations in weighted $\L^{2}$-spaces.
Note that, e.g., we have to consider 
$\tau^{1}_{\Phi}:\L^{2}_{\eps}(\omp)\to\L^{2}_{\eps_{\Phi}}(\om)$,
which leads with $\tau^{2}_{\Phi^{-1}}\eps_{\Phi}=\eps\tau^{1}_{\Phi^{-1}}$ to
\begin{align*}
\scp{\tau^{1}_{\Phi}E}{\Psi}_{\L^{2}_{\eps_{\Phi}}(\om)}
&=\scp{\tau^{1}_{\Phi}E}{\eps_{\Phi}\Psi}_{\L^{2}(\om)}
=\scp{E}{\tau^{2}_{\Phi^{-1}}\eps_{\Phi}\Psi}_{\L^{2}(\omp)}
=\scp{E}{\tau^{1}_{\Phi^{-1}}\Psi}_{\L^{2}_{\eps}(\omp)},
\end{align*}
i.e., $(\tau^{1}_{\Phi})^{*}=\tau^{1}_{\Phi^{-1}}$.
Analogously, we treat $\tau^{0}_{\Phi}:\L^{2}_{\nu}(\omp)\to\L^{2}_{\nu_{\Phi}}(\om)$.
\end{rem}

\begin{theo}
\label{theo:unitequi2}
$\A_{\ell,\Phi}^{*}\A_{\ell,\Phi}$ and $\A_{\ell,\Phi}\A_{\ell,\Phi}^{*}$
are unitarily equivalent to $\A_{\ell}^{*}\A_{\ell}$ and $\A_{\ell}\A_{\ell}^{*}$,
respectively. More precisely, 
\begin{align*}
\A_{0,\Phi}^{*}\A_{0,\Phi}
&=\tau^{0}_{\Phi^{-1}}\A_{0}^{*}\A_{0}\tau^{0}_{\Phi},
&
\A_{1,\Phi}^{*}\A_{1,\Phi}
&=\tau^{1}_{\Phi^{-1}}\A_{1}^{*}\A_{1}\tau^{1}_{\Phi},\\
\A_{0,\Phi}\A_{0,\Phi}^{*}
&=\tau^{1}_{\Phi^{-1}}\A_{0}\A_{0}^{*}\tau^{1}_{\Phi},
&
\A_{1,\Phi}\A_{1,\Phi}^{*}
&=\tau^{1}_{\Phi^{-1}}\A_{1}\A_{1}^{*}\tau^{1}_{\Phi}.
\end{align*}
Moreover, $\rho\A_{0,\Phi}\A_{0,\Phi}^{*}+\A_{1,\Phi}^{*}\A_{1,\Phi}$
and $\rho\A_{0}\A_{0}^{*}+\A_{1}^{*}\A_{1}$
are unitarily equivalent, i.e.,
$$\rho\A_{0,\Phi}\A_{0,\Phi}^{*}+\A_{1,\Phi}^{*}\A_{1,\Phi}
=\tau^{1}_{\Phi^{-1}}(\rho\A_{0}\A_{0}^{*}+\A_{1}^{*}\A_{1})\tau^{1}_{\Phi}.$$
\end{theo}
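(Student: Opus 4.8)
The plan is to obtain every asserted identity by directly substituting the four factorisations recorded in Lemma \ref{lem:unitequi1} and then exploiting that the pullbacks are mutually inverse. The crucial auxiliary fact is Remark \ref{rem:unitequi1}: in the weighted Lebesgue spaces at hand the maps $\tau^{0}_{\Phi}$ and $\tau^{1}_{\Phi}$ are unitary, with $(\tau^{0}_{\Phi})^{*}=\tau^{0}_{\Phi^{-1}}=(\tau^{0}_{\Phi})^{-1}$ and $(\tau^{1}_{\Phi})^{*}=\tau^{1}_{\Phi^{-1}}=(\tau^{1}_{\Phi})^{-1}$. In particular $\tau^{q}_{\Phi}\tau^{q}_{\Phi^{-1}}=\id=\tau^{q}_{\Phi^{-1}}\tau^{q}_{\Phi}$ for $q\in\{0,1\}$, and once these cancellations are available each identity is a one-line composition.

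First I would treat the two operators of the form $\A^{*}\A$. Applying Lemma \ref{lem:unitequi1} to both factors gives
$$\A_{0,\Phi}^{*}\A_{0,\Phi}=\tau^{0}_{\Phi^{-1}}\A_{0}^{*}\tau^{1}_{\Phi}\tau^{1}_{\Phi^{-1}}\A_{0}\tau^{0}_{\Phi}=\tau^{0}_{\Phi^{-1}}\A_{0}^{*}\A_{0}\tau^{0}_{\Phi},$$
where the inner factor $\tau^{1}_{\Phi}\tau^{1}_{\Phi^{-1}}=\id$ drops out; the identity for $\A_{1,\Phi}^{*}\A_{1,\Phi}$ follows in exactly the same way, again using $\tau^{1}_{\Phi}\tau^{1}_{\Phi^{-1}}=\id$. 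For the two operators of the form $\A\A^{*}$ I would compose in the reverse order, so that the cancellation now takes place at level $q=0$, e.g.
$$\A_{0,\Phi}\A_{0,\Phi}^{*}=\tau^{1}_{\Phi^{-1}}\A_{0}\tau^{0}_{\Phi}\tau^{0}_{\Phi^{-1}}\A_{0}^{*}\tau^{1}_{\Phi}=\tau^{1}_{\Phi^{-1}}\A_{0}\A_{0}^{*}\tau^{1}_{\Phi},$$
and analogously for $\A_{1,\Phi}\A_{1,\Phi}^{*}$, where again $\tau^{1}_{\Phi}\tau^{1}_{\Phi^{-1}}=\id$. The generalised Laplacian then follows by linearity: adding $\rho$ times the $\A_{0,\Phi}\A_{0,\Phi}^{*}$ identity to the $\A_{1,\Phi}^{*}\A_{1,\Phi}$ identity, both of which carry the common outer pullbacks $\tau^{1}_{\Phi^{-1}}(\,\cdots)\tau^{1}_{\Phi}$, and factoring these out yields the final displayed formula.

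Each identity above has the shape $S_{\Phi}=U^{*}SU$ with $U=\tau^{q}_{\Phi}$ unitary, so unitary equivalence in the stated sense is immediate from Remark \ref{rem:unitequi1}. The one step that requires genuine care, and which I would spell out, is the bookkeeping of domains: since the operators are unbounded, the cancellations are not purely formal and one must check that the composed domains coincide. Here I would invoke that $\tau^{0}_{\Phi}$ and $\tau^{1}_{\Phi}$ are topological isomorphisms on the relevant domains $\H^{1}_{\gatp}(\omp)$, $\R_{\gatp}(\omp)$, and $\eps^{-1}\D_{\ganp}(\omp)$ by Theorem \ref{theo:transtheo} and Corollary \ref{cor:transtheo}; this guarantees that the intermediate products $\tau^{q}_{\Phi}\tau^{q}_{\Phi^{-1}}$ really act as the identity on the corresponding intermediate spaces and that the domains of the left- and right-hand sides match exactly. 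With this domain matching in place, the displayed equalities hold as identities of unbounded self-adjoint operators, completing the proof.
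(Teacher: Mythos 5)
Your proposal is correct and follows exactly the paper's route: the paper's entire proof is ``Apply Lemma \ref{lem:unitequi1}'', i.e.\ substitute the four factorisations and cancel the intermediate pullbacks via $\tau^{q}_{\Phi}\tau^{q}_{\Phi^{-1}}=\id$ as recorded in Remark \ref{rem:unitequi1}. Your additional attention to domain matching via Theorem \ref{theo:transtheo} and Corollary \ref{cor:transtheo} is a welcome elaboration of what the paper leaves implicit.
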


\begin{proof}
Apply Lemma \ref{lem:unitequi1}.
\end{proof}

\begin{cor}
\label{cor:unitequi2}
The positive parts of the spectra of
$\A_{\ell,\Phi}^{*}\A_{\ell,\Phi}$, $\A_{\ell,\Phi}\A_{\ell,\Phi}^{*}$,
and $\A_{\ell}^{*}\A_{\ell}$, $\A_{\ell}\A_{\ell}^{*}$ coincide.
More precisely, 
\begin{align*}
\sigma(\A_{\ell,\Phi}^{*}\A_{\ell,\Phi})\setminus\{0\}
=\sigma(\A_{\ell,\Phi}\A_{\ell,\Phi}^{*})\setminus\{0\}
=\sigma(\A_{\ell}\A_{\ell}^{*})\setminus\{0\}
=\sigma(\A_{\ell}^{*}\A_{\ell})\setminus\{0\}.
\end{align*}
Moreover, the positive parts of the spectra of
$\rho\A_{0,\Phi}\A_{0,\Phi}^{*}+\A_{1,\Phi}^{*}\A_{1,\Phi}$,
and $\rho\A_{0}\A_{0}^{*}+\A_{1}^{*}\A_{1}$ coincide, i.e.,
\begin{align*}
\sigma(\rho\A_{0,\Phi}\A_{0,\Phi}^{*}+\A_{1,\Phi}^{*}\A_{1,\Phi})\setminus\{0\}
=\sigma(\rho\A_{0}\A_{0}^{*}+\A_{1}^{*}\A_{1})\setminus\{0\}.
\end{align*}
\end{cor}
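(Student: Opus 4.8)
The plan is to deduce everything directly from the unitary equivalences established in Theorem \ref{theo:unitequi2}, combined with the elementary principle that unitary equivalence preserves the spectrum. First I would record that, by Remark \ref{rem:unitequi1}, the pullbacks $\tau^{0}_{\Phi}:\L^{2}_{\nu}(\omp)\to\L^{2}_{\nu_{\Phi}}(\om)$ and $\tau^{1}_{\Phi}:\L^{2}_{\eps}(\omp)\to\L^{2}_{\eps_{\Phi}}(\om)$ (and likewise with $\mu$ in place of $\eps$) are \emph{unitary} operators between the respective weighted Lebesgue spaces, since in that setting one has $(\tau^{q}_{\Phi})^{*}=\tau^{q}_{\Phi^{-1}}=(\tau^{q}_{\Phi})^{-1}$. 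This is the single point that requires a moment's care, as the weights are precisely what makes the pullbacks isometric; it is exactly the content of Remark \ref{rem:unitequi1}, so no further work is needed.

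Then I would invoke Theorem \ref{theo:unitequi2}, which writes each of the self-adjoint operators $\A_{\ell,\Phi}^{*}\A_{\ell,\Phi}$ and $\A_{\ell,\Phi}\A_{\ell,\Phi}^{*}$ in the form $U^{*}TU$, where $U$ is one of the unitary pullbacks above and $T$ is the corresponding untransformed operator $\A_{\ell}^{*}\A_{\ell}$ or $\A_{\ell}\A_{\ell}^{*}$. Since a unitary conjugation sends $T-\lambda$ to $U^{*}(T-\lambda)U$, it preserves injectivity, surjectivity, and boundedness of the inverse, hence preserves the resolvent set and therefore the spectrum. This immediately yields
$$\sigma(\A_{\ell,\Phi}^{*}\A_{\ell,\Phi})=\sigma(\A_{\ell}^{*}\A_{\ell}),\qquad
\sigma(\A_{\ell,\Phi}\A_{\ell,\Phi}^{*})=\sigma(\A_{\ell}\A_{\ell}^{*}).$$

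To close the chain of equalities I would recall from Lemma \ref{lemconstev} (equivalently, the observation following \eqref{eq:saops}) that for a densely defined closed operator the operators $\A^{*}\A$ and $\A\A^{*}$ share the same spectrum except possibly at $0$; applied to $\A_{\ell}$ this gives $\sigma(\A_{\ell}^{*}\A_{\ell})\setminus\{0\}=\sigma(\A_{\ell}\A_{\ell}^{*})\setminus\{0\}$, with positivity of these spectra provided by Theorem \ref{theo:evderham}. Combining the two displayed identities with this fact produces the full chain in the statement. The assertion for the generalised Laplacian follows verbatim from the last displayed identity of Theorem \ref{theo:unitequi2}, again using that $\tau^{1}_{\Phi}$ is unitary. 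I do not anticipate any genuine obstacle here; the result is a direct corollary, and the only substantive ingredient—unitarity of the pullbacks in the weighted spaces—has already been supplied.
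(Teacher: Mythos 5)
Your proposal is correct and follows essentially the same route as the paper, whose entire proof is to recall \eqref{eq:saops} (the coincidence of $\sigma(\A^{*}\A)$ and $\sigma(\A\A^{*})$ away from $0$) and apply the unitary equivalences of Theorem \ref{theo:unitequi2}. Your additional care in noting, via Remark \ref{rem:unitequi1}, that the pullbacks are unitary precisely because of the weighted inner products is a welcome explicit justification of a point the paper leaves implicit, but it is not a different argument.
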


\begin{proof}
Recall \eqref{eq:saops} and apply Theorem \ref{theo:unitequi2}.
\end{proof}

\begin{rem}[eigenvectors]
\label{rem:unitequi2}
It holds:
\begin{itemize}
\item
$u$ is an eigenvector of $\A_{0,\Phi}^{*}\A_{0,\Phi}$, if and only if
$\tau^{0}_{\Phi}u$ is an eigenvector of $\A_{0}^{*}\A_{0}$.
\item
$E$ is an eigenvector of $\A_{1,\Phi}^{*}\A_{1,\Phi}$ and $\A_{0,\Phi}\A_{0,\Phi}^{*}$, respectively, if and only if
$\tau^{1}_{\Phi}E$ is an eigenvector of $\A_{1}^{*}\A_{1}$ and $\A_{0}\A_{0}^{*}$, respectively.
\item
$H$ is an eigenvector of $\A_{1,\Phi}\A_{1,\Phi}^{*}$, if and only if
$\tau^{1}_{\Phi}H$ is an eigenvector of $\A_{1}\A_{1}^{*}$.
\end{itemize}
\end{rem}

Note that by Lemma \ref{lem:unitequi1} and Remark \ref{rem:unitequi1} we have, e.g.,
for $F,G\in\L^{2}_{\eps}(\omp)$
\begin{align*}
\scp{\tau^{1}_{\Phi}F}{\tau^{1}_{\Phi}G}_{\L^{2}_{\eps_{\Phi}}(\om)}
&=\scp{F}{\tau^{1}_{\Phi^{-1}}\tau^{1}_{\Phi}G}_{\L^{2}_{\eps}(\omp)}
=\scp{F}{G}_{\L^{2}_{\eps}(\omp)},
\intertext{and for $F,G\in D(\A_{1,\Phi})$}
\scp{\A_{1}\tau^{1}_{\Phi}F}{\A_{1}\tau^{1}_{\Phi}G}_{\L^{2}_{\mu_{\Phi}}(\om)}
&=\scp{\tau^{1}_{\Phi}\A_{1,\Phi}F}{\tau^{1}_{\Phi}\A_{1,\Phi}G}_{\L^{2}_{\mu_{\Phi}}(\om)}
=\scp{\A_{1,\Phi}F}{\A_{1,\Phi}G}_{\L^{2}_{\mu}(\omp)},\\
\scp{\tau^{1}_{\Phi}F}{\tau^{1}_{\Phi}G}_{D(\A_{1})}
&=\scp{F}{G}_{D(\A_{1,\Phi})}.
\end{align*}
For a definition of the inner products $\scp{\,\cdot\,}{\,\cdot\,}_{D(\A_{1})}$
and $\scp{\,\cdot\,}{\,\cdot\,}_{D(\A_{1,\Phi})}$ see the next remark.

Hence we get:

\begin{rem}[isometries and orthonormal bases]
\label{rem:isoonbas}
The transformations $\tau^{q}_{\Phi}$ are isometries.
In particular, othonormal bases are mapped to othonormal bases.
More precisely:
\begin{itemize}
\item
$\tau^{0}_{\Phi}:\L^{2}_{\nu}(\omp)\to\L^{2}_{\nu_{\Phi}}(\om)$
and $\tau^{0}_{\Phi}:D(\A_{0,\Phi})\to D(\A_{0})$,
the latter 
$$D(\A_{0,\Phi})=\H^{1}_{\gatp}(\omp),\qquad
D(\A_{0})=\H^{1}_{\gat}(\om)$$
equipped with the inner products
\begin{align*}
\scp{\,\cdot\,}{\,\cdot\,}_{D(\A_{0,\Phi})}
&=\scp{\,\cdot\,}{\,\cdot\,}_{\L^{2}_{\nu}(\omp)}
+\scp{\A_{0,\Phi}\,\cdot\,}{\A_{0,\Phi}\,\cdot\,}_{\L^{2}_{\eps}(\omp)}
=\scp{\,\cdot\,}{\,\cdot\,}_{\L^{2}_{\nu}(\omp)}
+\scp{\na\,\cdot\,}{\na\,\cdot\,}_{\L^{2}_{\eps}(\omp)},\\
\scp{\,\cdot\,}{\,\cdot\,}_{D(\A_{0})}
&=\scp{\,\cdot\,}{\,\cdot\,}_{\L^{2}_{\nu_{\Phi}}(\om)}
+\scp{\A_{0}\,\cdot\,}{\A_{0}\,\cdot\,}_{\L^{2}_{\eps_{\Phi}}(\om)}
=\scp{\,\cdot\,}{\,\cdot\,}_{\L^{2}_{\nu_{\Phi}}(\om)}
+\scp{\na\,\cdot\,}{\na\,\cdot\,}_{\L^{2}_{\eps_{\Phi}}(\om)},
\end{align*}
are isometries. Hence $\tau^{0}_{\Phi}$ 
maps a $\L^{2}_{\nu}(\omp)$-orthonormal basis
or a $D(\A_{0,\Phi})$-orthonormal basis $\{u_{m}\}$ 
to the $\L^{2}_{\nu_{\Phi}}(\om)$-orthonormal basis 
or the $D(\A_{0})$-orthonormal basis $\{\tau^{0}_{\Phi}u_{m}\}$,
respectively, and vice versa. 
\item 
$\tau^{1}_{\Phi}:\L^{2}_{\eps}(\omp)\to\L^{2}_{\eps_{\Phi}}(\om)$,
$\tau^{1}_{\Phi}:\L^{2}_{\mu}(\omp)\to\L^{2}_{\mu_{\Phi}}(\om)$
and 
$$\tau^{1}_{\Phi}:D(\A_{1,\Phi})\to D(\A_{1}),\quad
\tau^{1}_{\Phi}:D(\A_{1,\Phi}^{*})\to D(\A_{1}^{*}),\quad
\tau^{1}_{\Phi}:D(\A_{0,\Phi}^{*})\to D(\A_{0,\Phi}^{*}),$$
the latter 
\begin{align*}
D(\A_{1,\Phi})&=\R_{\gatp}(\omp),
&
D(\A_{1})&=\R_{\gat}(\om),\\
D(\A_{1,\Phi}^{*})&=\R_{\ganp}(\omp),
&
D(\A_{1}^{*})&=\R_{\gan}(\om),\\
D(\A_{0,\Phi}^{*})&=\eps^{-1}\D_{\ganp}(\omp)
,
&
D(\A_{0}^{*})&=\eps_{\Phi}^{-1}\D_{\gan}(\om)
\end{align*}
equipped with the inner products
\begin{align*}
\scp{\,\cdot\,}{\,\cdot\,}_{D(\A_{1,\Phi})}
&=\scp{\,\cdot\,}{\,\cdot\,}_{\L^{2}_{\eps}(\omp)}
+\scp{\A_{1,\Phi}\,\cdot\,}{\A_{1,\Phi}\,\cdot\,}_{\L^{2}_{\mu}(\omp)}
=\scp{\,\cdot\,}{\,\cdot\,}_{\L^{2}_{\eps}(\omp)}
+\scp{\rot\,\cdot\,}{\rot\,\cdot\,}_{\L^{2}_{\mu^{-1}}(\omp)},\\
\scp{\,\cdot\,}{\,\cdot\,}_{D(\A_{1})}
&=\scp{\,\cdot\,}{\,\cdot\,}_{\L^{2}_{\eps_{\Phi}}(\om)}
+\scp{\A_{1}\,\cdot\,}{\A_{1}\,\cdot\,}_{\L^{2}_{\mu_{\Phi}}(\om)}
=\scp{\,\cdot\,}{\,\cdot\,}_{\L^{2}_{\eps_{\Phi}}(\om)}
+\scp{\rot\,\cdot\,}{\rot\,\cdot\,}_{\L^{2}_{\mu_{\Phi}^{-1}}(\om)},\\
\scp{\,\cdot\,}{\,\cdot\,}_{D(\A_{1,\Phi}^{*})}
&=\scp{\,\cdot\,}{\,\cdot\,}_{\L^{2}_{\mu}(\omp)}
+\scp{\A_{1,\Phi}^{*}\,\cdot\,}{\A_{1,\Phi}^{*}\,\cdot\,}_{\L^{2}_{\eps}(\omp)}
=\scp{\,\cdot\,}{\,\cdot\,}_{\L^{2}_{\mu}(\omp)}
+\scp{\rot\,\cdot\,}{\rot\,\cdot\,}_{\L^{2}_{\eps^{-1}}(\omp)},\\
\scp{\,\cdot\,}{\,\cdot\,}_{D(\A_{1}^{*})}
&=\scp{\,\cdot\,}{\,\cdot\,}_{\L^{2}_{\mu_{\Phi}}(\om)}
+\scp{\A_{1}^{*}\,\cdot\,}{\A_{1}^{*}\,\cdot\,}_{\L^{2}_{\eps_{\Phi}}(\om)}
=\scp{\,\cdot\,}{\,\cdot\,}_{\L^{2}_{\mu_{\Phi}}(\om)}
+\scp{\rot\,\cdot\,}{\rot\,\cdot\,}_{\L^{2}_{\eps_{\Phi}^{-1}}(\om)},\\
\scp{\,\cdot\,}{\,\cdot\,}_{D(\A_{0,\Phi}^{*})}
&=\scp{\,\cdot\,}{\,\cdot\,}_{\L^{2}_{\eps}(\omp)}
+\scp{\A_{0,\Phi}^{*}\,\cdot\,}{\A_{0,\Phi}^{*}\,\cdot\,}_{\L^{2}_{\nu}(\omp)}
=\scp{\,\cdot\,}{\,\cdot\,}_{\L^{2}_{\eps}(\omp)}
+\scp{\div\eps\,\cdot\,}{\div\eps\,\cdot\,}_{\L^{2}_{\nu^{-1}}(\omp)},\\
\scp{\,\cdot\,}{\,\cdot\,}_{D(\A_{0}^{*})}
&=\scp{\,\cdot\,}{\,\cdot\,}_{\L^{2}_{\eps_{\Phi}}(\om)}
+\scp{\A_{0}^{*}\,\cdot\,}{\A_{0}^{*}\,\cdot\,}_{\L^{2}_{\nu_{\Phi}}(\om)}
=\scp{\,\cdot\,}{\,\cdot\,}_{\L^{2}_{\eps_{\Phi}}(\om)}
+\scp{\div\eps_{\Phi}\,\cdot\,}{\div\eps_{\Phi}\,\cdot\,}_{\L^{2}_{\nu_{\Phi}^{-1}}(\om)},
\end{align*}
are isometries. Hence $\tau^{1}_{\Phi}$ 
maps a $\L^{2}_{\eps}(\omp)$-orthonormal basis
or a $D(\A_{1,\Phi})$-orthonormal basis
or a $D(\A_{0,\Phi}^{*})$-orthonormal basis $\{E_{m}\}$
to the $\L^{2}_{\eps_{\Phi}}(\om)$-orthonormal basis 
or the $D(\A_{1})$-orthonormal basis or the $D(\A_{0}^{*})$-orthonormal basis 
$\{\tau^{1}_{\Phi}E_{m}\}$, respectively, and vice versa. 
Analogously, $\tau^{1}_{\Phi}$ 
maps a $\L^{2}_{\mu}(\omp)$-orthonormal basis
or a $D(\A_{1,\Phi}^{*})$-orthonormal basis $\{H_{m}\}$
to the $\L^{2}_{\mu_{\Phi}}(\om)$-orthonormal basis 
or the $D(\A_{1}^{*})$-orthonormal basis 
$\{\tau^{1}_{\Phi}H_{m}\}$, respectively, and vice versa. 
\end{itemize}
\end{rem}

\subsection{Point Spectrum}
\label{sec:pointspectrum}

Now and more precisely \eqref{eq:maxev3}-\eqref{eq:lapev2} read for the domain $\omp$
\begin{align}
\begin{aligned}
\label{eq:maxev4}
\A_{0,\Phi}^{*}\A_{0,\Phi}u=-\nu^{-1}\div_{\ganp}\eps\na_{\gatp}u
&=\lambda_{0}u
&
\text{in }&\L^{2}_{\nu}(\omp),\\
\A_{1,\Phi}^{*}\A_{1,\Phi}E=\eps^{-1}\rot_{\ganp}\mu^{-1}\rot_{\gatp}E
&=\lambda_{1}E
&
\text{in }&\L^{2}_{\eps}(\omp),\\
\A_{0,\Phi}\A_{0,\Phi}^{*}H=-\na_{\gatp}\nu^{-1}\div_{\ganp}\eps H
&=\lambda_{0}H
&
\quad\text{ in }&\L^{2}_{\eps}(\omp)
\end{aligned}
\end{align}
with some eigenvectors
\begin{align*}
u&\in D(\A_{0,\Phi}^{*}\A_{0,\Phi})
=D(\nu^{-1}\div_{\ganp}\eps\na_{\gatp})
=\big\{\psi\in\H^{1}_{\gatp}(\omp)\,:\,\eps\na\psi\in\D_{\ganp}(\omp)\big\},\\
E&\in D(\A_{1,\Phi}^{*}\A_{1,\Phi})
=D(\eps^{-1}\rot_{\ganp}\mu^{-1}\rot_{\gatp})
=\big\{\Psi\in\R_{\gatp}(\omp)\,:\,\mu^{-1}\rot\Psi\in\R_{\ganp}(\omp)\big\},\\
H&\in D(\A_{0,\Phi}\A_{0,\Phi}^{*})
=D(\na_{\gatp}\nu^{-1}\div_{\ganp}\eps)
=\big\{\Psi\in\eps^{-1}\D_{\ganp}(\omp)\,:\,\nu^{-1}\div\eps\Psi\in\H^{1}_{\gatp}(\omp)\big\}.
\end{align*}

\begin{rem}
Note that for the eigenfields $E$ and $H$
a normal and tangential boundary condition is induced by the complex property since
\begin{align*}
E&\in R(\A_{1,\Phi}^{*})\subset N(\A_{0,\Phi}^{*})=N(\nu^{-1}\div_{\ganp}\eps)
=\big\{\Psi\in\L^{2}(\omp)\,:\,\div\eps\Psi=0,\,\n\cdot\eps\Psi|_{\ganp}=0\big\},\\
H&\in R(\A_{0,\Phi})\subset N(\A_{1,\Phi})=N(\mu^{-1}\rot_{\gatp})
=\big\{\Psi\in\L^{2}(\omp)\,:\,\rot\Psi=0,\,\n\times\Psi|_{\gatp}=0\big\},
\end{align*}
respectively.
\end{rem}

We want to discuss \eqref{eq:maxev4} equivalently in $\om$
utilising the pullbacks $\tau^{q}_{\Phi}$ and
Theorem \ref{theo:evderham}, Theorem \ref{theo:unitequi2}, and Corollary \ref{cor:unitequi2}.

\begin{theo}[eigenvalues of the de Rham complex]
\label{theo:evderhamphi}
%Let $(\om,\gat)$ be additionally a weak Lipschitz pair and let $\ell\in\{0,1\}$. Then
$\A_{\ell,\Phi}^{*}\A_{\ell,\Phi}$ and $\A_{\ell}^{*}\A_{\ell}$
are unitarily equivalent. The same holds for 
$\A_{\ell,\Phi}\A_{\ell,\Phi}^{*}$ and $\A_{\ell}\A_{\ell}^{*}$.
All these operators are self-adjoint and non-negative and have pure and discrete point spectrum with no accumulation point.
Moreover, the positive parts of the spectra coincide, i.e.,
\begin{align*}
&\qquad\sigma(\mu^{-1}\rot_{\gatp}\eps^{-1}\rot_{\ganp})\setminus\{0\}
=\sigma(\eps^{-1}\rot_{\ganp}\mu^{-1}\rot_{\gatp})\setminus\{0\}\\
&=\sigma(\mu_{\Phi}^{-1}\rot_{\gat}\eps_{\Phi}^{-1}\rot_{\gan})\setminus\{0\}
=\sigma(\eps_{\Phi}^{-1}\rot_{\gan}\mu_{\Phi}^{-1}\rot_{\gat})\setminus\{0\}
=\{\lambda_{1,\Phi,k}\}_{k\in\nat}
\subset(0,\infty)
\intertext{and}
&\qquad\sigma(-\na_{\gatp}\nu^{-1}\div_{\ganp}\eps)\setminus\{0\}
=\sigma(-\nu^{-1}\div_{\ganp}\eps\na_{\gatp})\setminus\{0\}\\
&=\sigma(-\na_{\gat}\nu_{\Phi}^{-1}\div_{\gan}\eps_{\Phi})\setminus\{0\}
=\sigma(-\nu_{\Phi}^{-1}\div_{\gan}\eps_{\Phi}\na_{\gat})\setminus\{0\}
=\{\lambda_{0,\Phi,k}\}_{k\in\nat}
\subset(0,\infty)
\end{align*}
with eigenvalues 
$0<\lambda_{\ell,\Phi,1}\leq\lambda_{\ell,\Phi,2}\leq\dots\leq\lambda_{\ell,\Phi,k-1}\leq\lambda_{\ell,\Phi,k}\leq\dots\to\infty$.
Only finitely many eigenvalues coincide  and they are repeated according to their multiplicity.
%For all $\lambda_{\ell,\Phi,n}$ the eigenspaces are finite dimensional.
\end{theo}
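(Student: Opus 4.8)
The plan is to assemble the statement entirely from results already established, since unitary equivalence transports every relevant spectral property. The only genuine content is bookkeeping: matching the operators on the two sides and checking that the hypotheses of the spectral theorem on the $\om$-side are met.

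First I would observe that the operators $\A_{\ell}$ (without the $\Phi$ subscript) defined in Section \ref{sec:derhamop} are precisely operators of the de Rham complex on the \emph{fixed} weak Lipschitz pair $(\om,\gat)$, but carrying the transformed coefficients $\eps_{\Phi}$, $\mu_{\Phi}$, $\nu_{\Phi}$ in place of $\eps$, $\mu$, $\nu$. Since these transformed fields are again admissible (as recorded right after their definition), Theorem \ref{theo:maxcpt} applies to the triple $(\om,\gat,\eps_{\Phi})$—the bounded positive weight does not alter the underlying compact embedding $\R_{\gat}(\om)\cap\eps_{\Phi}^{-1}\D_{\gan}(\om)\incl\L^{2}(\om)$—so Theorem \ref{theo:evderham} yields that $\A_{\ell}^{*}\A_{\ell}$ and $\A_{\ell}\A_{\ell}^{*}$ are self-adjoint, non-negative, and possess pure discrete point spectrum with no accumulation point and finite dimensional eigenspaces, together with the stated string of spectral identities on the $\om$-side.

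Next I would invoke Theorem \ref{theo:unitequi2}, which provides the explicit conjugation formulas, for instance $\A_{1,\Phi}^{*}\A_{1,\Phi}=\tau^{1}_{\Phi^{-1}}\A_{1}^{*}\A_{1}\tau^{1}_{\Phi}$ and its three relatives. By Remark \ref{rem:isoonbas} the pullbacks $\tau^{q}_{\Phi}$ are isometric isomorphisms between the relevant weighted $\L^{2}$-spaces, hence unitary, with $(\tau^{q}_{\Phi})^{-1}=\tau^{q}_{\Phi^{-1}}=(\tau^{q}_{\Phi})^{*}$. Consequently the operators on the $\omp$-side are genuinely unitarily equivalent to those on the $\om$-side, and unitary equivalence preserves self-adjointness, non-negativity, the point-spectrum structure, and the dimensions of eigenspaces; this transfers all the properties from the previous step to $\A_{\ell,\Phi}^{*}\A_{\ell,\Phi}$ and $\A_{\ell,\Phi}\A_{\ell,\Phi}^{*}$.

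Finally, for the displayed chains of spectral equalities I would combine two ingredients: the coincidence of the positive parts of the spectra across the $\Phi$ and non-$\Phi$ operators furnished by Corollary \ref{cor:unitequi2}, and the general identity $\sigma(\A^{*}\A)\setminus\{0\}=\sigma(\A\A^{*})\setminus\{0\}$ from \eqref{eq:saops}. Reading off the concrete operator expressions (e.g.\ $\A_{1,\Phi}^{*}\A_{1,\Phi}=\eps^{-1}\rot_{\ganp}\mu^{-1}\rot_{\gatp}$ versus $\A_{1}^{*}\A_{1}=\eps_{\Phi}^{-1}\rot_{\gan}\mu_{\Phi}^{-1}\rot_{\gat}$) then produces each line of the claimed identities, and the ordering $\lambda_{\ell,\Phi,1}<\lambda_{\ell,\Phi,2}<\dots$ is inherited from Theorem \ref{theo:evderham}. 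There is no serious obstacle here, as the unitary-equivalence machinery has done all the work; the one point deserving a moment's care is verifying that Theorem \ref{theo:evderham} legitimately applies to the transformed coefficient fields $\eps_{\Phi},\mu_{\Phi},\nu_{\Phi}$, which is exactly their admissibility together with the weak Lipschitz property of $(\om,\gat)$.
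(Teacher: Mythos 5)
Your proposal is correct and follows essentially the same route as the paper, which likewise derives the statement by combining Theorem \ref{theo:evderham}, Theorem \ref{theo:unitequi2}, and Corollary \ref{cor:unitequi2}. Your additional remark that one must check admissibility of $\eps_{\Phi},\mu_{\Phi},\nu_{\Phi}$ so that Theorem \ref{theo:evderham} applies on the $\om$-side is a worthwhile point of care that the paper leaves implicit (it is recorded after the definitions in Section \ref{sec:derhamop}), but it does not constitute a different argument.
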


\begin{proof}
Theorem \ref{theo:evderham}, Theorem \ref{theo:unitequi2}, and Corollary \ref{cor:unitequi2} yield
\begin{align*}
&\qquad\sigma(\A_{\ell,\Phi}^{*}\A_{\ell,\Phi})\setminus\{0\}
=\sigma(\A_{\ell,\Phi}\A_{\ell,\Phi}^{*})\setminus\{0\}
=\sigma(\cA_{\ell,\Phi}\cA_{\ell,\Phi}^{*})
=\sigma(\cA_{\ell,\Phi}^{*}\cA_{\ell,\Phi})\\
&=\sigma(\A_{\ell}^{*}\A_{\ell})\setminus\{0\}
=\sigma(\A_{\ell}\A_{\ell}^{*})\setminus\{0\}
=\sigma(\cA_{\ell}\cA_{\ell}^{*})
=\sigma(\cA_{\ell}^{*}\cA_{\ell})
=:\{\lambda_{\ell,\Phi,k}\}_{k\in\nat}
\end{align*}
for $\ell\in\{0,1\}$.
\end{proof}

\begin{rem}
\label{rem:lippairmap}
Note that by the definition of weak Lipschitz pair  it follows directly that $\Phi$ maps a weak Lipschitz pair $(\om,\gat)$
to a weak Lipschitz pair $(\omp,\gatp)$.
\end{rem}

Theorem \ref{theo:unitequi2}, Remark \ref{rem:unitequi2}, and Theorem \ref{theo:evderhamphi} show
that eigenvectors $u$, $E$, and $H$ in \eqref{eq:maxev4} 
for the domain $\omp$ and for the eigenvalues $\lambda_{0}$ and $\lambda_{1}$ 
are mapped to eigenvectors 
$\tau^{0}_{\Phi}u$, $\tau^{1}_{\Phi}E$, and $\tau^{1}_{\Phi}H$ for the domain $\om$
for the same eigenvalues and vice versa. More precisely,
$u$, $E$, and $H$ are eigenvectors in \eqref{eq:maxev4}, if and only if
\begin{align}
\begin{aligned}
\label{eq:maxev5}
\A_{0}^{*}\A_{0}\tau^{0}_{\Phi}u
=-\nu_{\Phi}^{-1}\div_{\gan}\eps_{\Phi}\na_{\gat}\tau^{0}_{\Phi}u
&=\lambda_{0}\tau^{0}_{\Phi}u
&
\text{in }&\L^{2}_{\nu_{\Phi}}(\om),\\
\A_{1}^{*}\A_{1}\tau^{1}_{\Phi}E
=\eps_{\Phi}^{-1}\rot_{\gan}\mu_{\Phi}^{-1}\rot_{\gat}\tau^{1}_{\Phi}E
&=\lambda_{1}\tau^{1}_{\Phi}E
&
\text{in }&\L^{2}_{\eps_{\Phi}}(\om),\\
\A_{0}\A_{0}^{*}\tau^{1}_{\Phi}H
=-\na_{\gat}\nu_{\Phi}^{-1}\div_{\gan}\eps_{\Phi}\tau^{1}_{\Phi}H
&=\lambda_{0}\tau^{1}_{\Phi}H
&
\quad\text{ in }&\L^{2}_{\eps_{\Phi}}(\om)
\end{aligned}
\end{align}
with eigenvectors
\begin{align*}
\tau^{0}_{\Phi}u&\in D(\A_{0}^{*}\A_{0})
=D(\nu_{\Phi}^{-1}\div_{\gan}\eps_{\Phi}\na_{\gat})
=\big\{\psi\in\H^{1}_{\gat}(\om)\,:\,\eps_{\Phi}\na\psi\in\D_{\gan}(\om)\big\},\\
\tau^{1}_{\Phi}E&\in D(\A_{1}^{*}\A_{1})
=D(\eps_{\Phi}^{-1}\rot_{\gan}\mu_{\Phi}^{-1}\rot_{\gat})
=\big\{\Psi\in\R_{\gat}(\om)\,:\,\mu_{\Phi}^{-1}\rot\Psi\in\R_{\gan}(\om)\big\},\\
\tau^{1}_{\Phi}H&\in D(\A_{0}\A_{0}^{*})
=D(\na_{\gat}\nu_{\Phi}^{-1}\div_{\gan}\eps_{\Phi})
=\big\{\Psi\in\eps_{\Phi}^{-1}\D_{\gan}(\om)\,:\,\nu_{\Phi}^{-1}\div\eps_{\Phi}\Psi\in\H^{1}_{\gat}(\om)\big\}.
\end{align*}

\begin{rem}
We have
\begin{align*}
u&\in D(\A_{0,\Phi}^{*}\A_{0,\Phi})
\subset D(\A_{0,\Phi})
=D(\na_{\gatp})
=\H^{1}_{\gatp}(\omp),\\
E&\in D(\A_{1,\Phi}^{*}\A_{1,\Phi})\cap N(\A_{0,\Phi}^{*})
\subset D(\A_{1,\Phi})\cap N(\A_{0,\Phi}^{*})
=D(\rot_{\gatp})\cap N(\div_{\ganp}\eps)\\
&=\big\{\Psi\in\R_{\gatp}(\omp)\cap\eps^{-1}\D_{\ganp}(\omp)\,:\,\div\eps\Psi=0\big\}\\
&=\big\{\Psi\in\R(\omp)\cap\eps^{-1}\D(\omp)\,:\,\div\eps\Psi=0,\,
\n\times\Psi|_{\gatp}=0,\,\n\cdot\eps\Psi|_{\ganp}=0\big\},\\
H&\in D(\A_{0,\Phi}\A_{0,\Phi}^{*})\cap N(\A_{1,\Phi})
\subset D(\A_{0,\Phi}^{*})\cap N(\A_{1,\Phi})
=D(\div_{\ganp}\eps)\cap N(\rot_{\gatp})\\
&=\big\{\Psi\in\R_{\gatp}(\omp)\cap\eps^{-1}\D_{\ganp}(\omp)\,:\,\rot\Psi=0\big\}\\
&=\big\{\Psi\in\R(\omp)\cap\eps^{-1}\D(\omp)\,:\,\rot\Psi=0,\,
\n\times\Psi|_{\gatp}=0,\,\n\cdot\eps\Psi|_{\ganp}=0\big\},
\end{align*}
and for the transformed fields
\begin{align*}
\tau^{0}_{\Phi}u&\in D(\A_{0}^{*}\A_{0})
\subset D(\A_{0})
=D(\na_{\gat})
=\H^{1}_{\gat}(\om),\\
\tau^{1}_{\Phi}E&\in D(\A_{1}^{*}\A_{1})\cap N(\A_{0}^{*})
\subset D(\A_{1})\cap N(\A_{0}^{*})
=D(\rot_{\gat})\cap N(\div_{\gan}\eps_{\Phi})\\
&=\big\{\Psi\in\R_{\gat}(\om)\cap\eps_{\Phi}^{-1}\D_{\gan}(\om)\,:\,\div\eps_{\Phi}\Psi=0\big\}\\
&=\big\{\Psi\in\R(\om)\cap\eps_{\Phi}^{-1}\D(\om)\,:\,\div\eps_{\Phi}\Psi=0,\,
\n\times\Psi|_{\gat}=0,\,\n\cdot\eps_{\Phi}\Psi|_{\gan}=0\big\},\\
\tau^{1}_{\Phi}H&\in D(\A_{0}\A_{0}^{*})\cap N(\A_{1})
\subset D(\A_{0}^{*})\cap N(\A_{1})
=D(\div_{\gan}\eps_{\Phi})\cap N(\rot_{\gat})\\
&=\big\{\Psi\in\R_{\gat}(\om)\cap\eps_{\Phi}^{-1}\D_{\gan}(\om)\,:\,\rot\Psi=0\big\}\\
&=\big\{\Psi\in\R(\om)\cap\eps_{\Phi}^{-1}\D(\om)\,:\,\rot\Psi=0,\,
\n\times\Psi|_{\gat}=0,\,\n\cdot\eps_{\Phi}\Psi|_{\gan}=0\big\}.
\end{align*}
For corresponding variational formulations see the appendix.
%For corresponding variational formulations see Remark \ref{rem:eigenv1} of the appendix.
\end{rem}

\begin{theo}[eigenvalues of the generalised Laplacian]
\label{theo:evlapphi}
%Let $(\om,\gat)$ be additionally a weak Lipschitz pair. Then
The operators 
$\rho\A_{0,\Phi}\A_{0,\Phi}^{*}+\A_{1,\Phi}^{*}\A_{1,\Phi}$ and
$\rho\A_{0}\A_{0}^{*}+\A_{1}^{*}\A_{1}$ are unitarily equivalent.
Moreover, both are self-adjoint and non-negative and have pure and discrete point spectrum with no accumulation point.
Moreover, the positive parts of the spectra coincide, i.e.,
\begin{align*}
&\qquad\sigma(-\rho\na_{\gatp}\nu^{-1}\div_{\ganp}\eps
+\eps^{-1}\rot_{\ganp}\mu^{-1}\rot_{\gatp})\setminus\{0\}\\
&=\sigma(-\rho\na_{\gat}\nu_{\Phi}^{-1}\div_{\gan}\eps_{\Phi}
+\eps_{\Phi}^{-1}\rot_{\gan}\mu_{\Phi}^{-1}\rot_{\gat})\setminus\{0\}\\
&=\big(\rho\sigma(-\na_{\gat}\nu_{\Phi}^{-1}\div_{\gan}\eps_{\Phi})
\cup\sigma(\eps_{\Phi}^{-1}\rot_{\gan}\mu_{\Phi}^{-1}\rot_{\gat})\big)\setminus\{0\}
=\rho\{\lambda_{0,\Phi,k}\}_{k\in\nat}\cup\{\lambda_{1,\Phi,k}\}_{k\in\nat}.
\end{align*}
Only finitely many eigenvalues coincide  and they are repeated according to their multiplicity.
%and all eigenspaces are finite dimensional.
\end{theo}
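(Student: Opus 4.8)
The plan is to obtain every assertion as a direct consequence of the unitary equivalence established in Theorem \ref{theo:unitequi2}, the spectral analysis of the model operator in Theorem \ref{theo:evlap}, and the coincidence of spectra in Corollary \ref{cor:unitequi2}. First I would recall from Remark \ref{rem:isoonbas} that $\tau^{1}_{\Phi}:\L^{2}_{\eps}(\omp)\to\L^{2}_{\eps_{\Phi}}(\om)$ is an isometric isomorphism with inverse $\tau^{1}_{\Phi^{-1}}=(\tau^{1}_{\Phi})^{*}$, hence a unitary map between the two weighted Hilbert spaces. By Theorem \ref{theo:unitequi2} we then have the operator identity
$$\rho\A_{0,\Phi}\A_{0,\Phi}^{*}+\A_{1,\Phi}^{*}\A_{1,\Phi}=\tau^{1}_{\Phi^{-1}}\big(\rho\A_{0}\A_{0}^{*}+\A_{1}^{*}\A_{1}\big)\tau^{1}_{\Phi},$$
which is precisely the claimed unitary equivalence and which, in particular, identifies the domains $D(\rho\A_{0,\Phi}\A_{0,\Phi}^{*}+\A_{1,\Phi}^{*}\A_{1,\Phi})$ and $D(\rho\A_{0}\A_{0}^{*}+\A_{1}^{*}\A_{1})$ via $\tau^{1}_{\Phi}$.

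Next I would transfer the qualitative spectral properties. Theorem \ref{theo:evlap} (applied to the complex with the admissible weights $\eps_{\Phi},\mu_{\Phi},\nu_{\Phi}$, which is exactly the setting of $\vec\Delta_{\gat,\gan}$) states that $\rho\A_{0}\A_{0}^{*}+\A_{1}^{*}\A_{1}$ is self-adjoint, non-negative, and has pure and discrete point spectrum with no accumulation point and finite-dimensional eigenspaces. Since unitary equivalence preserves self-adjointness, non-negativity, the resolvent set, and the entire spectrum together with its discreteness and the dimensions of the eigenspaces, all of these properties carry over verbatim to $\rho\A_{0,\Phi}\A_{0,\Phi}^{*}+\A_{1,\Phi}^{*}\A_{1,\Phi}$. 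No new compactness argument is required beyond the one already feeding Theorem \ref{theo:evlap}, as the compactness of the relevant embedding for the $\Phi$-complex is itself inherited through $\tau^{1}_{\Phi}$.

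Finally, for the explicit description of the positive spectrum I would invoke Corollary \ref{cor:unitequi2}, which already gives
$$\sigma(\rho\A_{0,\Phi}\A_{0,\Phi}^{*}+\A_{1,\Phi}^{*}\A_{1,\Phi})\setminus\{0\}=\sigma(\rho\A_{0}\A_{0}^{*}+\A_{1}^{*}\A_{1})\setminus\{0\},$$
and combine it with the decomposition $\sigma\setminus\{0\}=\big(\rho\sigma(\A_{0}^{*}\A_{0})\setminus\{0\}\big)\cup\big(\sigma(\A_{1}^{*}\A_{1})\setminus\{0\}\big)$ from Theorem \ref{theo:evlap} and the eigenvalue lists $\{\lambda_{\ell,\Phi,n}\}_{n\in\nat}$ fixed in Theorem \ref{theo:evderhamphi}. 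This yields the full chain of equalities in the statement. I do not expect a genuine obstacle here: the proof is essentially a bookkeeping combination of Theorem \ref{theo:unitequi2}, Theorem \ref{theo:evlap}, and Corollary \ref{cor:unitequi2}, and the only point requiring a word of care is keeping the weighted inner products and the domain identifications under $\tau^{1}_{\Phi}$ consistent, which the operator identity of Theorem \ref{theo:unitequi2} already encodes.
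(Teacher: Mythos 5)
Your proposal is correct and follows exactly the route the paper intends: the theorem is a direct bookkeeping consequence of the unitary equivalence in Theorem \ref{theo:unitequi2}, the spectral statement of Theorem \ref{theo:evlap} applied to the admissible weights $\eps_{\Phi},\mu_{\Phi},\nu_{\Phi}$, and the spectral coincidence of Corollary \ref{cor:unitequi2}, mirroring the explicit proof given for Theorem \ref{theo:evderhamphi}. No gaps.
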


\section{Conclusion and Outlook}
\label{sec:conclu}

\subsection{Eigenvalues and Rayleigh Quotients}
\label{sec:evandrayq}

We recall our results on the de Rham eigenvalues which are important 
for the study of their shape derivatives 
(variations of the domain and the boundary conditions
via the Lipschitz maps $\Phi:\om\to\Phi(\om)=\omp$)
in the second part of this paper.
So far we have shown that for bounded weak Lipschitz pairs $(\om,\gat)$
the de Rahm complex has countably many eigenvalues 
$$0<\lambda_{\ell,\Phi,1}\leq\lambda_{\ell,\Phi,2}\leq\dots
\leq\lambda_{\ell,\Phi,k-1}\leq\lambda_{\ell,\Phi,k}\leq\dots\to\infty,\qquad
\ell\in\{0,1\}.$$
For a fixed index $k$ we set
$$\lambda_{0,\Phi}:=\lambda_{0,\Phi,k},\qquad
\lambda_{1,\Phi}:=\lambda_{1,\Phi,k}.$$
Moreover, by Lemma \ref{lem:eigenval2} 
and \eqref{eq:maxev4}, \eqref{eq:maxev5}
the eigenvalues are given by the Rayleigh quotients
of the eigenfields, this is
\begin{align}
\begin{aligned}
\label{eq:eigenvaluesshapederivatives1}
\lambda_{0,\Phi}
&=\frac{\norm{\A_{0,\Phi}u}_{\L^{2}_{\eps}(\omp)}^{2}}
{\norm{u}_{\L^{2}_{\nu}(\omp)}^{2}}
=\frac{\scp{\eps\na_{\gat,\Phi}u}{\na_{\gat,\Phi}u}_{\L^{2}(\omp)}}
{\scp{\nu u}{u}_{\L^{2}(\omp)}}\\
&=\frac{\norm{\A_{0}\tau^{0}_{\Phi}u}_{\L^{2}_{\eps_{\Phi}}(\om)}^{2}}
{\norm{\tau^{0}_{\Phi}u}_{\L^{2}_{\nu_{\Phi}}(\om)}^{2}}
=\frac{\scp{\eps_{\Phi}\na_{\gat}\tau^{0}_{\Phi}u}{\na_{\gat}\tau^{0}_{\Phi}u}_{\L^{2}(\om)}}
{\scp{\nu_{\Phi}\tau^{0}_{\Phi}u}{\tau^{0}_{\Phi}u}_{\L^{2}(\om)}}\\
&=\frac{\norm{\A_{0,\Phi}^{*}H}_{\L^{2}_{\nu}(\omp)}^{2}}
{\norm{H}_{\L^{2}_{\eps}(\omp)}^{2}}
=\frac{\scp{\nu^{-1}\div_{\gan,\Phi}\eps H}{\div_{\gan,\Phi}\eps H}_{\L^{2}(\omp)}}
{\scp{\eps H}{H}_{\L^{2}(\omp)}}\\
&=\frac{\norm{\A_{0}^{*}\tau^{1}_{\Phi}H}_{\L^{2}_{\nu_{\Phi}}(\om)}^{2}}
{\norm{\tau^{1}_{\Phi}H}_{\L^{2}_{\eps_{\Phi}}(\om)}^{2}}
=\frac{\scp{\nu_{\Phi}^{-1}\div_{\gan}\eps_{\Phi}\tau^{1}_{\Phi}H}{\div_{\gan}\eps_{\Phi}\tau^{1}_{\Phi}H}_{\L^{2}(\om)}}
{\scp{\eps_{\Phi}\tau^{1}_{\Phi}H}{\tau^{1}_{\Phi}H}_{\L^{2}(\om)}},\\
\lambda_{1,\Phi}
&=\frac{\norm{\A_{1,\Phi}E}_{\L^{2}_{\mu}(\omp)}^{2}}
{\norm{E}_{\L^{2}_{\eps}(\omp)}^{2}}
=\frac{\scp{\mu^{-1}\rot_{\gat,\Phi}E}{\rot_{\gat,\Phi}E}_{\L^{2}(\omp)}}
{\scp{\eps E}{E}_{\L^{2}(\omp)}}\\
&=\frac{\norm{\A_{1}\tau^{1}_{\Phi}E}_{\L^{2}_{\mu_{\Phi}}(\om)}^{2}}
{\norm{\tau^{1}_{\Phi}E}_{\L^{2}_{\eps_{\Phi}}(\om)}^{2}}
=\frac{\scp{\mu_{\Phi}^{-1}\rot_{\gat}\tau^{1}_{\Phi}E}{\rot_{\gat}\tau^{1}_{\Phi}E}_{\L^{2}(\om)}}
{\scp{\eps_{\Phi}\tau^{1}_{\Phi}E}{\tau^{1}_{\Phi}E}_{\L^{2}(\om)}}
\end{aligned}
\end{align}
with eigenfields $u$, $E$, $H$, and 
$$\tau^{0}_{\Phi}u=\wt{u},\qquad
\tau^{1}_{\Phi}E=J_{\Phi}^{\top}\wt{E},\qquad
\tau^{2}_{\Phi}H=(\adj J_{\Phi})\wt{H},$$
respectively. 
Note that the eigenvalues $\lambda_{\ell,\Phi,k}$ are depending
not only on $\Phi$ (shape of the domain) but also on the mixed boundary conditions imposed on $\gat$ and $\gan$
and on the coefficients $\eps$, $\mu$, and $\nu$, which we do not indicate explicitly in our notations, i.e.,
$$\lambda_{0,\Phi,k}
=\lambda_{0,\Phi,k}(\om,\gat,\eps,\nu),\qquad
\lambda_{1,\Phi,k}
=\lambda_{1,\Phi,k}(\om,\gat,\eps,\mu).$$

\subsection{Heuristic Shape Derivatives of Eigenvalues}
\label{sec:heuev}

In this final subsection we want to conclude with formal computations
to derive shape derivatives of the eigenvalues
assuming\footnote{Note that this assumption is quite strong and, unless one restricts the analysis to suitable families of perturbations $\Phi$, it requires that the eigenvalue under consideration is simple. See Part II of this series of papers for more details concerning multiple eigenvalues.} that  the corresponding (pull-backs of the) eigenvectors are differentiable with respect to $\Phi$.
This means we investigate the behaviour of the eigenvalues under 
variations of the domain and the boundary conditions.
More precisely, we investigate the differentiable dependence of the eigenvalues 
of the de Rahm complex if the domain, i.e., the mapping $\Phi$, 
is changing in certain subsets of bi-Lipschitz transformations. Here the space $\C^{0,1}(\omb,\rt)$ of Lipschitz maps from $ \omb$ to $\rt$   
is endowed with its standard norm.

\begin{theo}
\label{theo:derevformal1}
Let $\eps$, $\mu$, and $\nu$  be of class $C^1$.
%Recall that 
%$$\A_{0}=\na_{\gat},\qquad
%\A_{1}=\mu_{\Phi}^{-1}\rot_{\gat},\qquad
%\A_{0}^{*}=\nu_{\Phi}^{-1}\div_{\gan}\eps_{\Phi}.$$
Let $u$, $E$, $H$ be normalised eigenfields such that  
$$\norm{\tau^{0}_{\Phi}u}_{\L^{2}_{\nu_{\Phi}}(\om)}
=\norm{u}_{\L^{2}_{\nu}(\omp)}
=\norm{\tau^{1}_{\Phi}E}_{\L^{2}_{\eps_{\Phi}}(\om)}
=\norm{E}_{\L^{2}_{\eps}(\omp)}
=\norm{\tau^{1}_{\Phi}H}_{\L^{2}_{\eps_{\Phi}}(\om)}
=\norm{H}_{\L^{2}_{\eps}(\omp)}=1,$$
and 
\begin{align*}
\lambda_{0,\Phi}
&=\norm{\A_{0}\tau^{0}_{\Phi}u}_{\L^{2}_{\eps_{\Phi}}(\om)}^{2}
=\norm{\na_{\gat}\tau^{0}_{\Phi}u}_{\L^{2}_{\eps_{\Phi}}(\om)}^{2}
=\scp{\eps_{\Phi}\na_{\gat}\tau^{0}_{\Phi}u}{\na_{\gat}\tau^{0}_{\Phi}u}_{\L^{2}(\om)},\\
\lambda_{1,\Phi}
&=\norm{\A_{1}\tau^{1}_{\Phi}E}_{\L^{2}_{\mu_{\Phi}}(\om)}^{2}
=\norm{\mu_{\Phi}^{-1}\rot_{\gat}\tau^{1}_{\Phi}E}_{\L^{2}_{\mu_{\Phi}}(\om)}^{2}
=\scp{\mu_{\Phi}^{-1}\rot_{\gat}\tau^{1}_{\Phi}E}{\rot_{\gat}\tau^{1}_{\Phi}E}_{\L^{2}(\om)},\\
\lambda_{0,\Phi}
&=\norm{\A_{0}^{*}\tau^{1}_{\Phi}H}_{\L^{2}_{\nu_{\Phi}}(\om)}^{2}
=\norm{\nu_{\Phi}^{-1}\div_{\gan}\eps_{\Phi}\tau^{1}_{\Phi}H}_{\L^{2}_{\nu_{\Phi}}(\om)}^{2}
=\scp{\nu_{\Phi}^{-1}\div_{\gan}\eps_{\Phi}\tau^{1}_{\Phi}H}{\div_{\gan}\eps_{\Phi}\tau^{1}_{\Phi}H}_{\L^{2}(\om)},
\end{align*}
cf.~Remark \ref{rem:isoonbas} and \eqref{eq:eigenvaluesshapederivatives1}.
Assume that $\tau^{0}_{\Phi}u$, $\tau^{1}_{\Phi}E$, $\tau^{1}_{\Phi}H$ are differentiable with respect to $\Phi \in \C^{0,1}(\omb,\rt )$.   
Then the directional derivatives of the eigenvalues with respect to a direction $\wt{\Psi}  \in \C^{0,1}(\omb,\rt)$ are given by 
\begin{align*}
\p_{\wt{\Psi}}\lambda_{0,\Phi}
&=\norm{\A_{0}\tau^{0}_{\Phi}u}_{\L^{2}_{(\p_{\wt{\Psi}}\eps_{\Phi})}(\om)}^{2}
-\lambda_{0,\Phi}\norm{\tau^{0}_{\Phi}u}_{\L^{2}_{(\p_{\wt{\Psi}}\nu_{\Phi})}(\om)}^{2}\\
&=\norm{\na_{\gat}\tau^{0}_{\Phi}u}_{\L^{2}_{(\p_{\wt{\Psi}}\eps_{\Phi})}(\om)}^{2}
-\lambda_{0,\Phi}\norm{\tau^{0}_{\Phi}u}_{\L^{2}_{(\p_{\wt{\Psi}}\nu_{\Phi})}(\om)}^{2}\\
&=\bscp{(\p_{\wt{\Psi}}\eps_{\Phi})\na_{\gat}\tau^{0}_{\Phi}u}{\na_{\gat}\tau^{0}_{\Phi}u}_{\L^{2}(\om)}
-\lambda_{0,\Phi}\bscp{(\p_{\wt{\Psi}}\nu_{\Phi})\tau^{0}_{\Phi}u}{\tau^{0}_{\Phi}u}_{\L^{2}(\om)},\\
\p_{\wt{\Psi}}\lambda_{1,\Phi}
&=\norm{\A_{1}\tau^{1}_{\Phi}E}_{\L^{2}_{-(\p_{\wt{\Psi}}\mu_{\Phi})}(\om)}^{2}
-\lambda_{1,\Phi}\norm{\tau^{1}_{\Phi}E}_{\L^{2}_{(\p_{\wt{\Psi}}\eps_{\Phi})}(\om)}^{2}\\
&=\norm{\rot_{\gat}\tau^{1}_{\Phi}E}_{\L^{2}_{(\p_{\wt{\Psi}}\mu_{\Phi}^{-1})}(\om)}^{2}
-\lambda_{1,\Phi}\norm{\tau^{1}_{\Phi}E}_{\L^{2}_{(\p_{\wt{\Psi}}\eps_{\Phi})}(\om)}^{2}\\
&=\bscp{(\p_{\wt{\Psi}}\mu_{\Phi}^{-1})\rot_{\gat}\tau^{1}_{\Phi}E}{\rot_{\gat}\tau^{1}_{\Phi}E}_{\L^{2}(\om)}
-\lambda_{1,\Phi}\bscp{(\p_{\wt{\Psi}}\eps_{\Phi})\tau^{1}_{\Phi}E}{\tau^{1}_{\Phi}E}_{\L^{2}(\om)},\\
\p_{\wt{\Psi}}\lambda_{0,\Phi}
&=\norm{\A_{0}^{*}\tau^{1}_{\Phi}H}_{\L^{2}_{-(\p_{\wt{\Psi}}\nu_{\Phi})}(\om)}^{2}
-\lambda_{0,\Phi}\norm{\tau^{1}_{\Phi}H}_{\L^{2}_{-(\p_{\wt{\Psi}}\eps_{\Phi})}(\om)}^{2}\\
&=\norm{\div_{\gan}\eps_{\Phi}\tau^{1}_{\Phi}H}_{\L^{2}_{(\p_{\wt{\Psi}}\nu_{\Phi}^{-1})}(\om)}^{2}
+\lambda_{0,\Phi}\norm{\tau^{1}_{\Phi}H}_{\L^{2}_{(\p_{\wt{\Psi}}\eps_{\Phi})}(\om)}^{2}\\
&=\bscp{(\p_{\wt{\Psi}}\nu_{\Phi}^{-1})\div_{\gan}\eps_{\Phi}\tau^{1}_{\Phi}H}{\div_{\gan}\eps_{\Phi}\tau^{1}_{\Phi}H}_{\L^{2}(\om)}
+\lambda_{0,\Phi}\bscp{(\p_{\wt{\Psi}}\eps_{\Phi})\tau^{1}_{\Phi}H}{\tau^{1}_{\Phi}H}_{\L^{2}(\om)}.
\end{align*}
Here, we have formally used the norm notation 
although the tensor fields $\pm\p_{\wt{\Psi}}(\cdots)_{\Phi}^{\pm}$ do not necessarily 
generate proper $\L^{2}(\om)$-inner products.
Note that 
$$\p_{\wt{\Psi}}\eps_{\Phi}=-\eps_{\Phi}(\p_{\wt{\Psi}}\eps_{\Phi}^{-1})\eps_{\Phi},\qquad
\p_{\wt{\Psi}}\mu_{\Phi}=-\mu_{\Phi}(\p_{\wt{\Psi}}\mu_{\Phi}^{-1})\mu_{\Phi},\qquad
\p_{\wt{\Psi}}\nu_{\Phi}=-\nu_{\Phi}^{2}\p_{\wt{\Psi}}\nu_{\Phi}^{-1},$$
cf.~Remark \ref{rem:derivatives2}.
Furthermore, we understand terms like $\p_{\wt\Psi}\eps$ in the sense of
$$\p_{\wt\Psi}\eps:=[\p_{\wt\Psi}\eps_{j,m}].$$
\end{theo}

\begin{proof}
We elaborate the computations only for $\lambda_{1,\Phi}$
and postpone the calculations of the remaining cases to Appendix \ref{app:heucomp}.
By \eqref{eq:eigenvaluesshapederivatives1} and the quotient rule we compute
\begin{align*}
(\p_{\wt{\Psi}}\lambda_{1,\Phi})
\scp{\eps_{\Phi}\tau^{1}_{\Phi}E}{\tau^{1}_{\Phi}E}_{\L^{2}(\om)}^{2}
&=\scp{\eps_{\Phi}\tau^{1}_{\Phi}E}{\tau^{1}_{\Phi}E}_{\L^{2}(\om)}
\p_{\wt{\Psi}}\scp{\mu_{\Phi}^{-1}\rot_{\gat}\tau^{1}_{\Phi}E}{\rot_{\gat}\tau^{1}_{\Phi}E}_{\L^{2}(\om)}\\
&\qquad
-\scp{\mu_{\Phi}^{-1}\rot_{\gat}\tau^{1}_{\Phi}E}{\rot_{\gat}\tau^{1}_{\Phi}E}_{\L^{2}(\om)}
\p_{\wt{\Psi}}\scp{\eps_{\Phi}\tau^{1}_{\Phi}E}{\tau^{1}_{\Phi}E}_{\L^{2}(\om)}\\
&=\scp{\eps_{\Phi}\tau^{1}_{\Phi}E}{\tau^{1}_{\Phi}E}_{\L^{2}(\om)}
\Big(\bscp{(\p_{\wt{\Psi}}\mu_{\Phi}^{-1})\rot_{\gat}\tau^{1}_{\Phi}E}{\rot_{\gat}\tau^{1}_{\Phi}E}_{\L^{2}(\om)}\\
&\qquad\qquad
+2\Re\scp{\mu_{\Phi}^{-1}\rot_{\gat}\tau^{1}_{\Phi}E}{\rot_{\gat}\p_{\wt{\Psi}}\tau^{1}_{\Phi}E}_{\L^{2}(\om)}\Big)\\
&\qquad
-\scp{\mu_{\Phi}^{-1}\rot_{\gat}\tau^{1}_{\Phi}E}{\rot_{\gat}\tau^{1}_{\Phi}E}_{\L^{2}(\om)}
\Big(\bscp{(\p_{\wt{\Psi}}\eps_{\Phi})\tau^{1}_{\Phi}E}{\tau^{1}_{\Phi}E}_{\L^{2}(\om)}\\
&\qquad\qquad
+2\Re\scp{\eps_{\Phi}\tau^{1}_{\Phi}E}{\p_{\wt{\Psi}}\tau^{1}_{\Phi}E}_{\L^{2}(\om)}\Big).
\end{align*}
Note that we assume that  $\tau^{1}_{\Phi}E$ is differentiable and hence 
$\p_{\wt{\Psi}}\tau^{1}_{\Phi}E$ exists.
Thus using
$$\scp{\mu_{\Phi}^{-1}\rot_{\gat}\tau^{1}_{\Phi}E}{\rot_{\gat}\tau^{1}_{\Phi}E}_{\L^{2}(\om)}
=\lambda_{1,\Phi}\scp{\eps_{\Phi}\tau^{1}_{\Phi}E}{\tau^{1}_{\Phi}E}_{\L^{2}(\om)}$$
we see
\begin{align*}
(\p_{\wt{\Psi}}\lambda_{1,\Phi})
\scp{\eps_{\Phi}\tau^{1}_{\Phi}E}{\tau^{1}_{\Phi}E}_{\L^{2}(\om)}
&=\bscp{(\p_{\wt{\Psi}}\mu_{\Phi}^{-1})\rot_{\gat}\tau^{1}_{\Phi}E}{\rot_{\gat}\tau^{1}_{\Phi}E}_{\L^{2}(\om)}\\
&\qquad+2\Re\scp{\mu_{\Phi}^{-1}\rot_{\gat}\tau^{1}_{\Phi}E}{\rot_{\gat}\p_{\wt{\Psi}}\tau^{1}_{\Phi}E}_{\L^{2}(\om)}\\
&\qquad
-\lambda_{1,\Phi}
\Big(\bscp{(\p_{\wt{\Psi}}\eps_{\Phi})\tau^{1}_{\Phi}E}{\tau^{1}_{\Phi}E}_{\L^{2}(\om)}
+2\Re\scp{\eps_{\Phi}\tau^{1}_{\Phi}E}{\p_{\wt{\Psi}}\tau^{1}_{\Phi}E}_{\L^{2}(\om)}\Big)\\
&=\bscp{(\p_{\wt{\Psi}}\mu_{\Phi}^{-1})\rot_{\gat}\tau^{1}_{\Phi}E}{\rot_{\gat}\tau^{1}_{\Phi}E}_{\L^{2}(\om)}\\
&\qquad+2\Re\scp{\A_{1}\tau^{1}_{\Phi}E}{\A_{1}\p_{\wt{\Psi}}\tau^{1}_{\Phi}E}_{\L^{2}_{\mu_{\Phi}}(\om)}\\
&\qquad
-\lambda_{1,\Phi}
\Big(\bscp{(\p_{\wt{\Psi}}\eps_{\Phi})\tau^{1}_{\Phi}E}{\tau^{1}_{\Phi}E}_{\L^{2}(\om)}
+2\Re\scp{\tau^{1}_{\Phi}E}{\p_{\wt{\Psi}}\tau^{1}_{\Phi}E}_{\L^{2}_{\eps_{\Phi}}(\om)}\Big)\\
&=\bscp{(\p_{\wt{\Psi}}\mu_{\Phi}^{-1})\rot_{\gat}\tau^{1}_{\Phi}E}{\rot_{\gat}\tau^{1}_{\Phi}E}_{\L^{2}(\om)}\\
&\qquad-\lambda_{1,\Phi}\bscp{(\p_{\wt{\Psi}}\eps_{\Phi})\tau^{1}_{\Phi}E}{\tau^{1}_{\Phi}E}_{\L^{2}(\om)}\\
&\qquad
+2\Re\bscp{\underbrace{(\A_{1}^{*}\A_{1}-\lambda_{1,\Phi})\tau^{1}_{\Phi}E}_{=0}}{\p_{\wt{\Psi}}\tau^{1}_{\Phi}E}_{\L^{2}_{\eps_{\Phi}}(\om)}\\
&=\scp{\rot_{\gat}\tau^{1}_{\Phi}E}{\rot_{\gat}\tau^{1}_{\Phi}E}_{\L^{2}_{(\p_{\wt{\Psi}}\mu_{\Phi}^{-1})}(\om)}
-\lambda_{1,\Phi}\scp{\tau^{1}_{\Phi}E}{\tau^{1}_{\Phi}E}_{\L^{2}_{(\p_{\wt{\Psi}}\eps_{\Phi})}(\om)}.
\end{align*}
Note that $\p_{\wt{\Psi}}\eps_{\Phi}=-\eps_{\Phi}(\p_{\wt{\Psi}}\eps_{\Phi}^{-1})\eps_{\Phi}$.
For a normalised eigenfield $E$ we obtain the assertions.
\end{proof}

By Lemma \ref{lem:derivatives} we have
\begin{align*}
\p_{\wt{\Psi}}\eps_{\Phi}
&=(\det J_{\Phi})J_{\Phi}^{-1}\Big(
\p_{\wt{\Psi}}\wt{\eps}+(\wt{\div\Psi})\wt{\eps}-2\sym(\wt{J_{\Psi}}\wt{\eps})
\Big)J_{\Phi}^{-\top},\\
\p_{\wt{\Psi}}\eps_{\Phi}^{-1}
&=(\det J_{\Phi})^{-1}J_{\Phi}^{\top}\Big(
\p_{\wt{\Psi}}\wt{\eps^{-1}}-(\wt{\div\Psi})\wt{\eps^{-1}}+2\sym(\wt{\eps^{-1}}\wt{J_{\Psi}})
\Big)J_{\Phi},\\
\p_{\wt{\Psi}}\nu_{\Phi}
&=(\det J_{\Phi})\Big(\p_{\wt{\Psi}}\wt{\nu}+(\wt{\div\Psi})\wt{\nu}\Big),\\
\p_{\wt{\Psi}}\nu_{\Phi}^{-1}
&=-(\det J_{\Phi})^{-1}\wt{\nu}^{-2}\Big(\p_{\wt{\Psi}}\wt{\nu}+(\wt{\div\Psi})\wt{\nu}\Big)
=(\det J_{\Phi})^{-1}\Big(\p_{\wt{\Psi}}\wt{\nu^{-1}}-(\wt{\div\Psi})\wt{\nu^{-1}}\Big),
\end{align*}
where the same formulas hold for $\eps$ replaced by $\mu$. Recall that by definition   $2\sym M=M+M^{\top}$ for any square matrix $M$.
Note that, e.g., for $\eps$, we have
$$\p_{\wt{\Psi}}\wt{\eps}_{j,m}=\wt{J_{\eps_{j,m}}}\wt{\Psi} $$
and hence 
$$(\p_{\wt{\Psi}}\wt{\eps}_{j,m})\circ\Phi^{-1}=J_{\eps_{j,m}}\Psi :=\p_{\Psi}\eps_{j,m}.$$

\begin{theo}
\label{theo:derevformal2}
Let the assumptions of Theorem \ref{theo:derevformal1} be satisfied. Then
\begin{align*}
\p_{\wt{\Psi}}\lambda_{0,\Phi}
&=\Bscp{\big(
\p_{\Psi}\eps+(\div\Psi)\eps-2\sym(J_{\Psi}\eps)
\big)\na_{\gatp}u}{\na_{\gatp}u}_{\L^{2}(\omp)}\\
&\qquad-\lambda_{0,\Phi}
\Bscp{\big(\p_{\Psi}\nu+(\div\Psi)\nu\big)u}{u}_{\L^{2}(\omp)},\\
\p_{\wt{\Psi}}\lambda_{1,\Phi}
&=\Bscp{\big(
\p_{\Psi}\mu^{-1}-(\div\Psi)\mu^{-1}+2\sym(\mu^{-1}J_{\Psi})
\big)\rot_{\gatp}E}{\rot_{\gatp}E}_{\L^{2}(\omp)}\\
&\qquad-\lambda_{1,\Phi}\Bscp{\big(
\p_{\Psi}\eps+(\div\Psi)\eps-2\sym(J_{\Psi}\eps)
\big)E}{E}_{\L^{2}(\omp)},\\
\p_{\wt{\Psi}}\lambda_{0,\Phi}
&=\Bscp{\big(\p_{\Psi}\nu^{-1}-(\div\Psi)\nu^{-1}\big)
\div_{\ganp}\eps H}{\div_{\ganp}\eps H}_{\L^{2}(\omp)}\\
&\qquad+\lambda_{0,\Phi}\Bscp{\big(
\p_{\Psi}\eps+(\div\Psi)\eps-2\sym(J_{\Psi}\eps)
\big)H}{H}_{\L^{2}(\omp)}.
\end{align*}
Recalling Lemma \ref{lem:eigenvec1} and with the dual eigenvectors 
\begin{align*}
H^{*}&:=\lambda_{0,\Phi}^{-1/2}\A_{0,\Phi}u=\lambda_{0,\Phi}^{-1/2}\na_{\gatp}u,
&
u&=\lambda_{0,\Phi}^{-1/2}\A_{0,\Phi}^{*}H^{*}=-\lambda_{0,\Phi}^{-1/2}\nu^{-1}\div_{\ganp}\eps H^{*},\\
u^{*}&:=\lambda_{0,\Phi}^{-1/2}\A_{0,\Phi}^{*}H=-\lambda_{0,\Phi}^{-1/2}\nu^{-1}\div_{\ganp}\eps H,
&
H&=\lambda_{0,\Phi}^{-1/2}\A_{0,\Phi}u^{*}=\lambda_{0,\Phi}^{-1/2}\na_{\gatp}u^{*},\\
E^{*}&:=\lambda_{1,\Phi}^{-1/2}\A_{1,\Phi}E=\lambda_{1,\Phi}^{-1/2}\mu^{-1}\rot_{\gatp}E
&
E&:=\lambda_{1,\Phi}^{-1/2}\A_{1,\Phi}^{*}E^{*}=\lambda_{1,\Phi}^{-1/2}\eps^{-1}\rot_{\ganp}E^{*}
\end{align*}
we get the formulas
\begin{align*}
\frac{\p_{\wt{\Psi}}\lambda_{0,\Phi}}{\lambda_{0,\Phi}}
&=\Bscp{\big(
\p_{\Psi}\eps+(\div\Psi)\eps-2\sym(J_{\Psi}\eps)
\big)H^{*}}{H^{*}}_{\L^{2}(\omp)}\\
&\qquad-
\Bscp{\big(\p_{\Psi}\nu+(\div\Psi)\nu\big)u}{u}_{\L^{2}(\omp)},\\
\frac{\p_{\wt{\Psi}}\lambda_{1,\Phi}}{\lambda_{1,\Phi}}
&=-\Bscp{\big(
\p_{\Psi}\mu+(\div\Psi)\mu-2\sym(J_{\Psi}\mu)
\big)E^{*}}{E^{*}}_{\L^{2}(\omp)}\\
&\qquad-\Bscp{\big(
\p_{\Psi}\eps+(\div\Psi)\eps-2\sym(J_{\Psi}\eps)
\big)E}{E}_{\L^{2}(\omp)},\\
\frac{\p_{\wt{\Psi}}\lambda_{0,\Phi}}{\lambda_{0,\Phi}}
&=-\Bscp{\big(\p_{\Psi}\nu+(\div\Psi)\nu\big)
\nu u^{*}}{u^{*}}_{\L^{2}(\omp)}\\
&\qquad+\Bscp{\big(
\p_{\Psi}\eps+(\div\Psi)\eps-2\sym(J_{\Psi}\eps)
\big)H}{H}_{\L^{2}(\omp)}.
\end{align*}
\end{theo}

\begin{proof}
Again we focus on $\lambda_{1,\Phi}$ and refer for $\lambda_{0,\Phi}$ to Appendix \ref{app:heucomp}.
By Theorem \ref{theo:transtheo}, Corollary \ref{cor:transtheo}, and Remark \ref{rem:transtheo1} we see
\begin{align*}
\p_{\wt{\Psi}}\lambda_{1,\Phi}
&=\bscp{(\p_{\wt{\Psi}}\mu_{\Phi}^{-1})\rot_{\gat}\tau^{1}_{\Phi}E}{\rot_{\gat}\tau^{1}_{\Phi}E}_{\L^{2}(\om)}
-\lambda_{1,\Phi}\bscp{(\p_{\wt{\Psi}}\eps_{\Phi})\tau^{1}_{\Phi}E}{\tau^{1}_{\Phi}E}_{\L^{2}(\om)},\\
&=\Bscp{(\det J_{\Phi})^{-1}\big(
\p_{\wt{\Psi}}\wt{\mu^{-1}}-(\wt{\div\Psi})\wt{\mu^{-1}}+2\sym(\wt{\mu^{-1}}\wt{J_{\Psi}})
\big)J_{\Phi}\rot_{\gat}\tau^{1}_{\Phi}E}{J_{\Phi}\rot_{\gat}\tau^{1}_{\Phi}E}_{\L^{2}(\om)}\\
&\qquad-\lambda_{1,\Phi}\Bscp{(\det J_{\Phi})\big(
\p_{\wt{\Psi}}\wt{\eps}+(\wt{\div\Psi})\wt{\eps}-2\sym(\wt{J_{\Psi}}\wt{\eps})
\big)J_{\Phi}^{-\top}\tau^{1}_{\Phi}E}{J_{\Phi}^{-\top}\tau^{1}_{\Phi}E}_{\L^{2}(\om)},\\
&=\Bscp{(\det J_{\Phi})\big(
\p_{\wt{\Psi}}\wt{\mu^{-1}}-(\wt{\div\Psi})\wt{\mu^{-1}}+2\sym(\wt{\mu^{-1}}\wt{J_{\Psi}})
\big)\wt{\rot_{\gatp}E}}{\wt{\rot_{\gatp}E}}_{\L^{2}(\om)}\\
&\qquad-\lambda_{1,\Phi}\Bscp{(\det J_{\Phi})\big(
\p_{\wt{\Psi}}\wt{\eps}+(\wt{\div\Psi})\wt{\eps}-2\sym(\wt{J_{\Psi}}\wt{\eps})
\big)\wt{E}}{\wt{E}}_{\L^{2}(\om)},\\
&=\Bscp{\big(
\p_{\Psi}\mu^{-1}-(\div\Psi)\mu^{-1}+2\sym(\mu^{-1}J_{\Psi})
\big)\rot_{\gatp}E}{\rot_{\gatp}E}_{\L^{2}(\omp)}\\
&\qquad-\lambda_{1,\Phi}\Bscp{\big(
\p_{\Psi}\eps+(\div\Psi)\eps-2\sym(J_{\Psi}\eps)
\big)E}{E}_{\L^{2}(\omp)},\\
&=\Bscp{\big(
-\p_{\Psi}\mu-(\div\Psi)\mu+2\sym(J_{\Psi}\mu)
\big)\A_{1,\Phi}E}{\A_{1,\Phi}E}_{\L^{2}(\omp)}\\
&\qquad-\lambda_{1,\Phi}\Bscp{\big(
\p_{\Psi}\eps+(\div\Psi)\eps-2\sym(J_{\Psi}\eps)
\big)E}{E}_{\L^{2}(\omp)}.
\end{align*}
\end{proof}

In the particular case, where $\eps$, $\mu$, and $\nu$ 
are the identity mappings, Theorem \ref{theo:derevformal2} yields
\begin{align}
\label{eq:derevformal3}
\begin{aligned}
\frac{\p_{\wt{\Psi}}\lambda_{0,\Phi}}{\lambda_{0,\Phi}}
&=\bscp{(\div\Psi-2\sym J_{\Psi})H^{*}}{H^{*}}_{\L^{2}(\omp)}
-\bscp{(\div\Psi)u}{u}_{\L^{2}(\omp)}\\
&=-\bscp{(\symtr J_{\Psi})H^{*}}{H^{*}}_{\L^{2}(\omp)}
-\bscp{(\div\Psi)u}{u}_{\L^{2}(\omp)},\\
\frac{\p_{\wt{\Psi}}\lambda_{1,\Phi}}{\lambda_{1,\Phi}}
&=\bscp{(2\sym J_{\Psi}-\div\Psi)E^{*}}{E^{*}}_{\L^{2}(\omp)}
+\bscp{(2\sym J_{\Psi}-\div\Psi)E}{E}_{\L^{2}(\omp)}\\
&=\bscp{(\symtr J_{\Psi})E^{*}}{E^{*}}_{\L^{2}(\omp)}
+\bscp{(\symtr J_{\Psi})E}{E}_{\L^{2}(\omp)},\\
\frac{\p_{\wt{\Psi}}\lambda_{0,\Phi}}{\lambda_{0,\Phi}}
&=-\bscp{(\div\Psi)u^{*}}{u^{*}}_{\L^{2}(\omp)}
+\bscp{(\div\Psi-2\sym J_{\Psi})H}{H}_{\L^{2}(\omp)}\\
&=-\bscp{(\div\Psi)u^{*}}{u^{*}}_{\L^{2}(\omp)}
-\bscp{(\symtr J_{\Psi})H}{H}_{\L^{2}(\omp)}
\end{aligned}
\end{align}
with $\symtr M:=2\sym M-(\tr M)\cdot\id$, i.e.,
$$\symtr J_{\Psi}
=2\sym J_{\Psi}-(\tr J_{\Psi})\cdot\id
=J_{\Psi}+J_{\Psi}^{\top}-(\div\Psi)\cdot\id.$$
\eqref{eq:derevformal3} are the formulas \eqref{eq:derivintro} from the introduction
with $H=H^{*}$ and $B=E^{*}$.

\section*{Acknowledgments}

The authors are thankful to Prof. Massimo Lanza de Cristoforis for useful discussions and references. 
The first  named author acknowledges support by the project ``Perturbation problems and
asymptotics for elliptic differential equations: variational and potential theoretic methods'' funded by the MUR ``Progetti di Ricerca di Rilevante Interesse
Nazionale'' (PRIN) Bando 2022 grant no. 2022SENJZ3; he is also a member of the Gruppo Nazionale per l'Analisi  Matematica, la Probabilit\`{a} e le loro Applicazioni (GNAMPA) of the Istituto Nazionale di Alta Matematica (INdAM).
The third named author is very thankful for the kind 
invitation of the second author and the great hospitality of the 
Fakult\"at Mathematik, Institut f\"ur Analysis of the Technische Universit\"at Dresden
during his visiting period 
when some problems discussed in this paper where addressed.
He also acknowledges  financial support by ``Fondazione Ing. Aldo Gini'' for  his visit in Dresden.
His work was partially realized during his stay
at the Czech Technical University in Prague with the support of the EXPRO grant No.
20-17749X of the Czech Science Foundation. He has also received support from the French
government under the France 2030 investment plan, as part of the Initiative d’Excellence d’Aix-Marseille Université – A*MIDEX AMX-21-RID-012.

%%%%%%%%%%%%%%%%%%%%%%%%%%%%%%%%%%%%%%%%%%%%%%%%%%%%%%%%%%%%%%%%%%%%%%%%%%%%%%%%%%%%%%%%%%%%%%%%%%%%%

\bibliographystyle{plain} 
%\bibliography{/Users/paule/Documents/TeX/input/bibTeX/lpz}

%\bibliography{/Users/lamberti/Documents/ufficio/ricerca/lavori/Dirk/shpdrv/lpz}

\bibliography{lpz1}

%%%%%%%%%%%%%%%%%%%%%%%%%%%%%%%%%%%%%%%%%%%%%%%%%%%%%%%%%%%%%%%%%%%%%%%%%%%%%%%%%%%%%%%%%%%%%%%%%%%%%

\section{Appendix}

\subsection{Computations of Shape Derivatives}
\label{app:heucomp}

Recall from Section \ref{sec:derhamop} the transformed matrices
\begin{align*}
\eps_{\Phi}
=\tau^{2}_{\Phi}\eps\tau^{1}_{\Phi^{-1}}
&=(\det J_{\Phi})J_{\Phi}^{-1}\wt{\eps}J_{\Phi}^{-\top},
&
\nu_{\Phi}
=\tau^{3}_{\Phi}\nu\tau^{0}_{\Phi^{-1}}
&=(\det J_{\Phi})\wt{\nu},\\
\eps_{\Phi}^{-1}
=\tau^{1}_{\Phi}\eps^{-1}\tau^{2}_{\Phi^{-1}}
&=(\det J_{\Phi})^{-1}J_{\Phi}^{\top}\wt{\eps^{-1}}J_{\Phi}.
\end{align*}
Note that generally 
$$\p_{v}f(x)=f'(x)v$$
and that $\p_{v}f(x)=f'(x)v=fv$ holds for bounded linear $f$.

\begin{lem}
\label{lem:derivatives}
Let $k\in\reals$. It holds 
\begin{align*}
\p_{\wt{\Psi}}J_{\Phi}
&=J_{\wt{\Psi}}
=\wt{J_{\Psi}}J_{\Phi},
&
\p_{\wt{\Psi}}J_{\Phi}^{-1}
&=-J_{\Phi}^{-1}J_{\wt{\Psi}}J_{\Phi}^{-1}
=-J_{\Phi}^{-1}\wt{J_{\Psi}},\\
\p_{\wt{\Psi}}J_{\Phi}^{\top}
&=J_{\wt{\Psi}}^{\top}
=J_{\Phi}^{\top}\wt{J_{\Psi}^{\top}},
&
\p_{\wt{\Psi}}J_{\Phi}^{-\top}
&=-J_{\Phi}^{-\top}J_{\wt{\Psi}}^{\top}J_{\Phi}^{-\top}
=-\wt{J_{\Psi}^{\top}}J_{\Phi}^{-\top},
\intertext{and}
\p_{\wt{\Psi}}(\det J_{\Phi})
&=(\det J_{\Phi})\wt{\div\Psi},
&
\p_{\wt{\Psi}}\big((\det J_{\Phi})^{k}\big)
&=k(\det J_{\Phi})^{k}\wt{\div\Psi}.
\end{align*}
Moreover, if $\eps$ and $\nu$ are  of class $C^1$, it holds
\begin{align*}
\p_{\wt{\Psi}}\eps_{\Phi}
&=(\det J_{\Phi})J_{\Phi}^{-1}\Big(
\p_{\wt{\Psi}}\wt{\eps}
+(\wt{\div\Psi})\wt{\eps}
-2\sym(\wt{J_{\Psi}}\wt{\eps})
\Big)J_{\Phi}^{-\top},\\
\p_{\wt{\Psi}}\eps_{\Phi}^{-1}
&=(\det J_{\Phi})^{-1}J_{\Phi}^{\top}\Big(
\p_{\wt{\Psi}}\wt{\eps^{-1}}
-(\wt{\div\Psi})\wt{\eps^{-1}}
+2\sym(\wt{\eps^{-1}}\wt{J_{\Psi}})
\Big)J_{\Phi},\\
\p_{\wt{\Psi}}\nu_{\Phi}
&=(\det J_{\Phi})\Big(\p_{\wt{\Psi}}\wt{\nu}+(\wt{\div\Psi})\wt{\nu}\Big)
=\nu_{\Phi}\Big(\frac{\p_{\wt{\Psi}}\wt{\nu}}{\wt{\nu}}+(\wt{\div\Psi})\Big),\\
\p_{\wt{\Psi}}\nu_{\Phi}^{k}
&=k(\det J_{\Phi})^{k}\wt{\nu}^{k-1}\Big(\p_{\wt{\Psi}}\wt{\nu}+(\wt{\div\Psi})\wt{\nu}\Big)
=k\nu_{\Phi}^{k}\Big(\frac{\p_{\wt{\Psi}}\wt{\nu}}{\wt{\nu}}+(\wt{\div\Psi})\Big).
\end{align*}
\end{lem}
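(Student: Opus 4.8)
The plan is to treat $\p_{\wt\Psi}$ as the Gâteaux derivative with respect to $\Phi$ in the direction $\wt\Psi$ — a derivation that commutes with transposition — and to reduce the whole lemma to the Leibniz rule built on a handful of elementary identities for $J_\Phi$. Since $J_\Phi=\Phi'$ depends linearly on $\Phi$, differentiating gives at once $\p_{\wt\Psi}J_\Phi=(\wt\Psi)'=J_{\wt\Psi}$, and the chain rule applied to $\wt\Psi=\Psi\circ\Phi$ turns this into $J_{\wt\Psi}=(\Psi'\circ\Phi)\Phi'=\wt{J_\Psi}J_\Phi$, which is the first line. Transposing (legitimate because $\p_{\wt\Psi}$ commutes with $(\cdot)^{\top}$) yields $\p_{\wt\Psi}J_\Phi^{\top}=J_{\wt\Psi}^{\top}=J_\Phi^{\top}\wt{J_\Psi^{\top}}$.

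Next I would differentiate the defining relation $J_\Phi^{-1}J_\Phi=\id$ to obtain $\p_{\wt\Psi}J_\Phi^{-1}=-J_\Phi^{-1}(\p_{\wt\Psi}J_\Phi)J_\Phi^{-1}=-J_\Phi^{-1}J_{\wt\Psi}J_\Phi^{-1}$, and then insert $J_{\wt\Psi}=\wt{J_\Psi}J_\Phi$ to collapse the middle factor to the compact form $-J_\Phi^{-1}\wt{J_\Psi}$; transposition gives the companion formula for $J_\Phi^{-\top}$. For the determinant I would invoke Jacobi's formula $\p_{\wt\Psi}\det J_\Phi=(\det J_\Phi)\tr\!\big(J_\Phi^{-1}\p_{\wt\Psi}J_\Phi\big)$ together with the similarity invariance of the trace, so that $\tr(J_\Phi^{-1}\wt{J_\Psi}J_\Phi)=\tr\wt{J_\Psi}=\wt{\tr J_\Psi}=\wt{\div\Psi}$, yielding $\p_{\wt\Psi}\det J_\Phi=(\det J_\Phi)\wt{\div\Psi}$; the case $(\det J_\Phi)^{k}$ is then just the scalar chain rule.

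The coefficient identities follow by the Leibniz rule applied to the explicit factorisations $\eps_\Phi=(\det J_\Phi)J_\Phi^{-1}\wt\eps\,J_\Phi^{-\top}$, $\eps_\Phi^{-1}=(\det J_\Phi)^{-1}J_\Phi^{\top}\wt{\eps^{-1}}J_\Phi$, and $\nu_\Phi=(\det J_\Phi)\wt\nu$, where $\p_{\wt\Psi}\wt\eps$ is the derivative of the coefficient factor, understood entrywise as $[\p_{\wt\Psi}\wt\eps_{n,m}]$. Differentiating $\eps_\Phi$ and collecting the four resulting terms under the common conjugation $(\det J_\Phi)J_\Phi^{-1}(\,\cdot\,)J_\Phi^{-\top}$ produces the bracket $\p_{\wt\Psi}\wt\eps+(\wt{\div\Psi})\wt\eps-\wt{J_\Psi}\wt\eps-\wt\eps\,\wt{J_\Psi^{\top}}$, and here the one genuinely structural step enters: because $\eps=\eps^{\top}$ we have $\wt\eps\,\wt{J_\Psi^{\top}}=(\wt{J_\Psi}\wt\eps)^{\top}$, so the two Jacobian terms fuse into $2\sym(\wt{J_\Psi}\wt\eps)$, giving the asserted formula. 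The computation for $\eps_\Phi^{-1}$ is identical in spirit, symmetry of $\eps^{-1}$ now producing $2\sym(\wt{\eps^{-1}}\wt{J_\Psi})$; the scalar cases $\nu_\Phi$ and $\nu_\Phi^{k}$ are immediate, the second displayed form of each being recovered simply by factoring $\nu_\Phi^{k}=(\det J_\Phi)^{k}\wt\nu^{k}$ back out.

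I expect the only real difficulty to be bookkeeping rather than conceptual: keeping the transposes straight and tracking the left/right placement of the $J_\Phi$ factors, and recognising the symmetrisation step — which is precisely where the admissibility, hence symmetry, of $\eps$ and $\mu$ is used. On the analytic side, the Lipschitz hypothesis on $\eps$ and $\nu$ is what makes $\p_{\wt\Psi}\wt\eps$ and $\p_{\wt\Psi}\wt\nu$ meaningful, while the $J_\Phi$-dependent factors, though only $\L^{\infty}$, enter polynomially and rationally, so their differentiation in the perturbation parameter is pointwise a.e.\ and requires no further regularity.
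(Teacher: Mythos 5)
Your proposal is correct and follows essentially the same route as the paper: chain rule for $J_{\wt\Psi}=\wt{J_\Psi}J_\Phi$, the standard derivatives of the inverse and of the determinant (the paper derives these from the Neumann series and the expansion $\det(\id+sT)=1+s\tr T+O(s^2)$ rather than quoting Jacobi's formula, but the content is identical), and then the Leibniz rule on the factorisations of $\eps_\Phi$, $\eps_\Phi^{-1}$, $\nu_\Phi$ with the symmetry of $\eps$ fusing the two Jacobian terms into $2\sym(\wt{J_\Psi}\wt\eps)$. The only cosmetic difference is that the paper obtains $\p_{\wt\Psi}\eps_\Phi^{-1}$ via $-\eps_\Phi^{-1}(\p_{\wt\Psi}\eps_\Phi)\eps_\Phi^{-1}$ while you differentiate the explicit factorisation of $\eps_\Phi^{-1}$ directly; both yield the stated formula.
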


\begin{rem}
\label{rem:derivatives1}
In particular, we have for $\eps=\id$
\begin{align*}
\p_{\wt{\Psi}}\id_{\Phi}
&=(\det J_{\Phi})J_{\Phi}^{-1}\big(\wt{\div\Psi}-2\sym\wt{J_{\Psi}}\big)J_{\Phi}^{-\top}
=-(\det J_{\Phi})J_{\Phi}^{-1}(\wt{\symtr J_{\Psi}})J_{\Phi}^{-\top},\\
\p_{\wt{\Psi}}\id_{\Phi}^{-1}
&=(\det J_{\Phi})^{-1}J_{\Phi}^{\top}\big(-\wt{\div\Psi}+2\sym\wt{J_{\Psi}}\big)J_{\Phi}
=(\det J_{\Phi})^{-1}J_{\Phi}^{\top}(\wt{\symtr J_{\Psi}})J_{\Phi}
\end{align*}
with $\symtr$ from \eqref{eq:derevformal3}, i.e.,
$\symtr J_{\Psi}=2\sym J_{\Psi}-\div\Psi$.
\end{rem}

\begin{rem}
\label{rem:derivatives2}
Note that 
\begin{align*}
\p_{\wt{\Psi}}\eps_{\Phi}^{-1}
&=-\eps_{\Phi}^{-1}(\p_{\wt{\Psi}}\eps_{\Phi})\eps_{\Phi}^{-1},
&
\p_{\wt{\Psi}}\nu_{\Phi}^{-1}
&=-\nu_{\Phi}^{-2}\p_{\wt{\Psi}}\nu_{\Phi},\\
\p_{\wt{\Psi}}\eps_{\Phi}
&=-\eps_{\Phi}(\p_{\wt{\Psi}}\eps_{\Phi}^{-1})\eps_{\Phi},
&
\p_{\wt{\Psi}}\nu_{\Phi}
&=-\nu_{\Phi}^{2}\p_{\wt{\Psi}}\nu_{\Phi}^{-1}.
\end{align*}
Similar formulas hold for $\eps_{\Phi}$ and $\nu_{\Phi}$ replaced by
$\wt{\eps}$ and $\wt{\nu}$, respectively.
\end{rem}

\begin{proof}[Proof of Lemma \ref{lem:derivatives}.]
By the chain rule we have $(\wt{\Psi})'=\wt{\Psi'}\Phi'$, i.e.,
$J_{\wt{\Psi}}=\wt{J_{\Psi}}J_{\Phi}$ and $J_{\wt{\Psi}}J_{\Phi}^{-1}=\wt{J_{\Psi}}$.
Since $\det(\id+sT)=1+s\tr T+O(s^2)$ we get with $\tr J_{\Psi}=\div\Psi$
\begin{align*}
\det J_{\Phi+s\wt{\Psi}}
&=\det(J_{\Phi}+sJ_{\wt{\Psi}})
=(\det J_{\Phi})\det(\id+sJ_{\wt{\Psi}}J_{\Phi}^{-1})
=(\det J_{\Phi})\det(\id+s\wt{J_{\Psi}})\\
&=(\det J_{\Phi})\big(1+s\tr\wt{J_{\Psi}}+O(s^2)\big)
=(\det J_{\Phi})\big(1+s\,\wt{\div\Psi}+O(s^2)\big).
\end{align*}
Moreover, for topological isomorphisms it holds $\p_{H}T^{-1}=(T^{-1})'H=-T^{-1}HT^{-1}$ as 
$$(T+H)^{-1}
=T^{-1}(\id+HT^{-1})^{-1}
=T^{-1}\sum_{n\geq0}(-HT^{-1})^{n}
=T^{-1}-T^{-1}HT^{-1}+O\big(|H|^2\big).$$
Then the first six and the last two derivatives in the lemma are easily computed. Furthermore,
using the latter results we get
\begin{align*}
\p_{\wt{\Psi}}\eps_{\Phi}
=\p_{\wt{\Psi}}\big((\det J_{\Phi})J_{\Phi}^{-1}\wt{\eps}J_{\Phi}^{-\top}\big)
&=\big(\p_{\wt{\Psi}}(\det J_{\Phi})\big)J_{\Phi}^{-1}\wt{\eps}J_{\Phi}^{-\top}
+(\det J_{\Phi})(\p_{\wt{\Psi}}J_{\Phi}^{-1})\wt{\eps}J_{\Phi}^{-\top}\\
&\qquad+(\det J_{\Phi})J_{\Phi}^{-1}(\p_{\wt{\Psi}}\wt{\eps})J_{\Phi}^{-\top}
+(\det J_{\Phi})J_{\Phi}^{-1}\wt{\eps}(\p_{\wt{\Psi}}J_{\Phi}^{-\top})\\
&=\big((\det J_{\Phi})\wt{\div\Psi}\big)J_{\Phi}^{-1}\wt{\eps}J_{\Phi}^{-\top}
-(\det J_{\Phi})J_{\Phi}^{-1}\wt{J_{\Psi}}\wt{\eps}J_{\Phi}^{-\top}\\
&\qquad+(\det J_{\Phi})J_{\Phi}^{-1}(\p_{\wt{\Psi}}\wt{\eps})J_{\Phi}^{-\top}
-(\det J_{\Phi})J_{\Phi}^{-1}\wt{\eps}\wt{J_{\Psi}^{\top}}J_{\Phi}^{-\top}\\
&=(\det J_{\Phi})J_{\Phi}^{-1}\Big((\wt{\div\Psi})\wt{\eps}
-\underbrace{\big(\wt{J_{\Psi}}\wt{\eps}+\wt{\eps}\wt{J_{\Psi}^{\top}}\big)}_{=
2\sym(\wt{J_{\Psi}}\wt{\eps})}+\p_{\wt{\Psi}}\wt{\eps}\Big)J_{\Phi}^{-\top}
\end{align*}
and, using this, by the chain rule
\begin{align*}
\p_{\wt{\Psi}}\eps_{\Phi}^{-1}
&=-\eps_{\Phi}^{-1}(\p_{\wt{\Psi}}\eps_{\Phi})\eps_{\Phi}^{-1}\\
&=-(\det J_{\Phi})^{-1}J_{\Phi}^{\top}\wt{\eps^{-1}}\underbrace{J_{\Phi}J_{\Phi}^{-1}}_{=\id}
\Big((\wt{\div\Psi})\wt{\eps}-2\sym(\wt{J_{\Psi}}\wt{\eps})+\p_{\wt{\Psi}}\wt{\eps}\Big)
\underbrace{J_{\Phi}^{-\top}J_{\Phi}^{\top}}_{=\id}\wt{\eps^{-1}}J_{\Phi}\\
&=-(\det J_{\Phi})^{-1}J_{\Phi}^{\top}
\Big((\wt{\div\Psi})\wt{\eps^{-1}}
-2\underbrace{\wt{\eps^{-1}}\sym(\wt{J_{\Psi}}\wt{\eps})\wt{\eps^{-1}}}_{=\sym(\wt{\eps^{-1}}\wt{J_{\Psi}})}
+\underbrace{\wt{\eps^{-1}}(\p_{\wt{\Psi}}\wt{\eps})\wt{\eps^{-1}}}_{=-\p_{\wt{\Psi}}\wt{\eps^{-1}}}\Big)J_{\Phi},
\end{align*}
finishing the proof. 
\end{proof}

\subsection{Variational Formulations}

%%%%%%%%%%%%%%%%%%%%%%%%%%%

For $\A_{\ell}$ and $\A_{\ell,\Phi}$, $\ell\in\{0,1\}$, from Section \ref{sec:pointspectrum} we note
the following variational formulations:
For all 
$$\psi_{\Phi}\in D(\A_{0,\Phi})=\H^{1}_{\gatp}(\omp),\quad
\Psi_{\Phi}\in D(\A_{1,\Phi})=\R_{\gatp}(\omp),\quad
\Theta_{\Phi}\in D(\A_{0,\Phi}^{*})=\eps^{-1}\D_{\ganp}(\omp)$$
it holds
\begin{align*}
\lambda_{0}\scp{\nu u}{\psi_{\Phi}}_{\L^{2}(\omp)}
&=\lambda_{0}\scp{u}{\psi_{\Phi}}_{\L^{2}_{\nu}(\omp)}
=\scp{\A_{0,\Phi}^{*}\A_{0,\Phi}u}{\psi_{\Phi}}_{\L^{2}_{\nu}(\omp)}\\
&=\scp{\A_{0,\Phi}u}{\A_{0,\Phi}\psi_{\Phi}}_{\L^{2}_{\eps}(\omp)}
=\scp{\eps\na_{\gatp}u}{\na_{\gatp}\psi_{\Phi}}_{\L^{2}(\omp)},\\
\lambda_{1}\scp{\eps E}{\Psi_{\Phi}}_{\L^{2}(\omp)}
&=\lambda_{1}\scp{E}{\Psi_{\Phi}}_{\L^{2}_{\eps}(\omp)}
=\scp{\A_{1,\Phi}^{*}\A_{1,\Phi}E}{\Psi_{\Phi}}_{\L^{2}_{\eps}(\omp)}\\
&=\scp{\A_{1,\Phi}E}{\A_{1,\Phi}\Psi_{\Phi}}_{\L^{2}_{\mu}(\omp)}
=\scp{\mu^{-1}\rot_{\gatp}E}{\rot_{\gatp}\Psi_{\Phi}}_{\L^{2}(\omp)},\\
\lambda_{0}\scp{\eps H}{\Theta_{\Phi}}_{\L^{2}(\omp)}
&=\lambda_{0}\scp{H}{\Theta_{\Phi}}_{\L^{2}_{\eps}(\omp)}
=\scp{\A_{0,\Phi}\A_{0,\Phi}^{*}H}{\Theta_{\Phi}}_{\L^{2}_{\eps}(\omp)}\\
&=\scp{\A_{0,\Phi}^{*}H}{\A_{0,\Phi}^{*}\Theta_{\Phi}}_{\L^{2}_{\nu}(\omp)}
=\scp{\nu^{-1}\div_{\ganp}\eps H}{\div_{\ganp}\eps\Theta_{\Phi}}_{\L^{2}(\omp)}.
\end{align*}
For all
$$\psi\in D(\A_{0})=\H^{1}_{\gat}(\om),\quad
\Psi\in D(\A_{1})=\R_{\gat}(\om),\quad
\Theta\in D(\A_{0}^{*})=\eps_{\Phi}^{-1}\D_{\gan}(\om)$$
it holds
\begin{align*}
\lambda_{0}\scp{\nu_{\Phi}\tau^{0}_{\Phi}u}{\psi}_{\L^{2}(\om)}
&=\lambda_{0}\scp{\tau^{0}_{\Phi}u}{\psi}_{\L^{2}_{\nu_{\Phi}}(\om)}
=\scp{\A_{0}^{*}\A_{0}\tau^{0}_{\Phi}u}{\psi}_{\L^{2}_{\nu_{\Phi}}(\om)}\\
&=\scp{\A_{0}\tau^{0}_{\Phi}u}{\A_{0}\psi}_{\L^{2}_{\eps_{\Phi}}(\om)}
=\scp{\eps_{\Phi}\na_{\gat}\tau^{0}_{\Phi}u}{\na_{\gat}\psi}_{\L^{2}(\om)},\\
\lambda_{1}\scp{\eps_{\Phi}\tau^{1}_{\Phi}E}{\Psi}_{\L^{2}(\om)}
&=\lambda_{1}\scp{\tau^{1}_{\Phi}E}{\Psi}_{\L^{2}_{\eps_{\Phi}}(\om)}
=\scp{\A_{1}^{*}\A_{1}\tau^{1}_{\Phi}E}{\Psi}_{\L^{2}_{\eps_{\Phi}}(\om)}\\
&=\scp{\A_{1}\tau^{1}_{\Phi}E}{\A_{1}\Psi}_{\L^{2}_{\mu_{\Phi}}(\om)}
=\scp{\mu_{\Phi}^{-1}\rot_{\gat}\tau^{1}_{\Phi}E}{\rot_{\gat}\Psi}_{\L^{2}(\om)},\\
\lambda_{0}\scp{\eps_{\Phi}\tau^{1}_{\Phi}H}{\Theta}_{\L^{2}(\om)}
&=\lambda_{0}\scp{\tau^{1}_{\Phi}H}{\Theta}_{\L^{2}_{\eps_{\Phi}}(\om)}
=\scp{\A_{0}\A_{0}^{*}\tau^{1}_{\Phi}H}{\Theta}_{\L^{2}_{\eps_{\Phi}}(\om)}\\
&=\scp{\A_{0}^{*}\tau^{1}_{\Phi}H}{\A_{0}^{*}\Theta}_{\L^{2}_{\nu_{\Phi}}(\om)}
=\scp{\nu_{\Phi}^{-1}\div_{\gan}\eps_{\Phi}\tau^{1}_{\Phi}H}{\div_{\gan}\eps_{\Phi}\Theta}_{\L^{2}(\om)}.
\end{align*}
Hence, more explicitly, 
\begin{align*}
\lambda_{0}\bscp{(\det J_{\Phi})\wt{\nu}\tau^{0}_{\Phi}u}{\psi}_{\L^{2}(\om)}
&=\bscp{(\det J_{\Phi})\wt{\eps}J_{\Phi}^{-\top}\na_{\gat}\tau^{0}_{\Phi}u}{J_{\Phi}^{-\top}\na_{\gat}\psi}_{\L^{2}(\om)},\\
\lambda_{1}\bscp{(\det J_{\Phi})\wt{\eps}J_{\Phi}^{-\top}\tau^{1}_{\Phi}E}{J_{\Phi}^{-\top}\Psi}_{\L^{2}(\om)}
&=\bscp{(\det J_{\Phi})^{-1}\wt{\mu}^{-1}J_{\Phi}\rot_{\gat}\tau^{1}_{\Phi}E}{J_{\Phi}\rot_{\gat}\Psi}_{\L^{2}(\om)},\\
\lambda_{0}\bscp{(\det J_{\Phi})\wt{\eps}J_{\Phi}^{-\top}\tau^{1}_{\Phi}H}{J_{\Phi}^{-\top}\Theta}_{\L^{2}(\om)}
&=\bscp{(\det J_{\Phi})^{-1}\wt{\nu}^{-1}\div_{\gan}\eps_{\Phi}\tau^{1}_{\Phi}H}{\div_{\gan}\eps_{\Phi}\Theta}_{\L^{2}(\om)}.
\end{align*}
Note that $\tau^{0}_{\Phi}u=\wt{u}$, $J_{\Phi}^{-\top}\tau^{1}_{\Phi}E=\wt{E}$, and 
$\eps_{\Phi}
=\tau^{2}_{\Phi}\eps\tau^{1}_{\Phi^{-1}}
=(\det J_{\Phi})J_{\Phi}^{-1}\wt{\eps}J_{\Phi}^{-\top}
=(\adj J_{\Phi})\wt{\eps}J_{\Phi}^{-\top}$.
Thus
\begin{align*}
\lambda_{0}\bscp{(\det J_{\Phi})\wt{\nu u}}{\psi}_{\L^{2}(\om)}
&=\bscp{(\det J_{\Phi})\wt{\eps}J_{\Phi}^{-\top}\na_{\gat}\wt{u}}
{J_{\Phi}^{-\top}\na_{\gat}\psi}_{\L^{2}(\om)},\\
\lambda_{1}\bscp{(\det J_{\Phi})\wt{\eps E}}
{J_{\Phi}^{-\top}\Psi}_{\L^{2}(\om)}
&=\bscp{(\det J_{\Phi})^{-1}\wt{\mu}^{-1}J_{\Phi}\rot_{\gat}J_{\Phi}^{\top}\wt{E}}
{J_{\Phi}\rot_{\gat}\Psi}_{\L^{2}(\om)},\\
\lambda_{0}\bscp{(\det J_{\Phi})\wt{\eps H}}{J_{\Phi}^{-\top}\Theta}_{\L^{2}(\om)}
&=\bscp{(\det J_{\Phi})^{-1}\wt{\nu}^{-1}\div_{\gan}(\adj J_{\Phi})\wt{\eps H}}
{\div_{\gan}(\adj J_{\Phi})\wt{\eps}J_{\Phi}^{-\top}\Theta}_{\L^{2}(\om)}.
\end{align*}

%%%%%%%%%%%%%%%%%%%%%%%%%%%%%%%%%%%%

\subsection{Some Additional Proofs}

\begin{proof}[\bf Proof of Lemma \ref{lem:densefullbc1}]
Consider the densely defined and closed linear operators
\begin{align*}
\A_{0}:=\na_{\ga}:\H^{1}_{\ga}(\om)\subset\L^{2}(\om)&\to\L^{2}(\om),\\
\A_{1}:=\rot_{\ga}:\R_{\ga}(\om)\subset\L^{2}(\om)&\to\L^{2}(\om),\\
\A_{2}:=\div_{\ga}:\D_{\ga}(\om)\subset\L^{2}(\om)&\to\L^{2}(\om)
\intertext{together with their densely defined and closed adjoints}
\A_{0}^{*}=-\div:\bD(\om)\subset\L^{2}(\om)&\to\L^{2}(\om),\\
\A_{1}^{*}=\rot:\bR(\om)\subset\L^{2}(\om)&\to\L^{2}(\om),\\
\A_{2}^{*}=-\na:\bH^{1}(\om)\subset\L^{2}(\om)&\to\L^{2}(\om)
\end{align*}
and recall that generally $\A_{\ell}^{**}=\ol{\A_{\ell}}=\A_{\ell}$. 
Then, e.g., for the rotor 
\begin{align*}
\R_{\ga}(\om)
&=D(\A_{1})
=D(\A_{1}^{**})\\
&=\big\{\Psi\in\L^{2}(\om)\,:\,
\exists\,\Psi_{\A_{1}^{**}}\in\L^{2}(\om)\quad
\forall\,\Theta\in D(\A_{1}^{*})\quad
\scp{\Psi}{\A_{1}^{*}\Theta}_{\L^{2}(\om)}=\scp{\Psi_{\A_{1}^{**}}}{\Theta}_{\L^{2}(\om)}\big\}\\
&=\big\{\Psi\in\L^{2}(\om)\,:\,
\exists\,\Psi_{\rot}\in\L^{2}(\om)\quad
\forall\,\Theta\in\bR(\om)\quad
\scp{\Psi}{\rot\Theta}_{\L^{2}(\om)}=\scp{\Psi_{\rot}}{\Theta}_{\L^{2}(\om)}\big\}\\
&=\big\{\Psi\in\bR(\om)\,:\,
\forall\,\Theta\in\bR(\om)\quad
\scp{\Psi}{\rot\Theta}_{\L^{2}(\om)}=\scp{\rot\Psi}{\Theta}_{\L^{2}(\om)}\big\},
\end{align*}
finishing the proof.
\end{proof}

\begin{proof}[\bf Longer Proof of Theorem \ref{theo:derevformal1}] 

By \eqref{eq:eigenvaluesshapederivatives1} and the quotient rule we compute
{\small
\begin{align*}
(\p_{\wt{\Psi}}\lambda_{0,\Phi})
\scp{\nu_{\Phi}\tau^{0}_{\Phi}u}{\tau^{0}_{\Phi}u}_{\L^{2}(\om)}^{2}
&=\scp{\nu_{\Phi}\tau^{0}_{\Phi}u}{\tau^{0}_{\Phi}u}_{\L^{2}(\om)}
\p_{\wt{\Psi}}\scp{\eps_{\Phi}\na_{\gat}\tau^{0}_{\Phi}u}{\na_{\gat}\tau^{0}_{\Phi}u}_{\L^{2}(\om)}\\
&\qquad
-\scp{\eps_{\Phi}\na_{\gat}\tau^{0}_{\Phi}u}{\na_{\gat}\tau^{0}_{\Phi}u}_{\L^{2}(\om)}
\p_{\wt{\Psi}}\scp{\nu_{\Phi}\tau^{0}_{\Phi}u}{\tau^{0}_{\Phi}u}_{\L^{2}(\om)}\\
&=\scp{\nu_{\Phi}\tau^{0}_{\Phi}u}{\tau^{0}_{\Phi}u}_{\L^{2}(\om)}
\Big(\bscp{(\p_{\wt{\Psi}}\eps_{\Phi})\na_{\gat}\tau^{0}_{\Phi}u}{\na_{\gat}\tau^{0}_{\Phi}u}_{\L^{2}(\om)}\\
&\qquad\qquad
+2\Re\scp{\eps_{\Phi}\na_{\gat}\tau^{0}_{\Phi}u}{\na_{\gat}\p_{\wt{\Psi}}\tau^{0}_{\Phi}u}_{\L^{2}(\om)}\Big)\\
&\qquad
-\scp{\eps_{\Phi}\na_{\gat}\tau^{0}_{\Phi}u}{\na_{\gat}\tau^{0}_{\Phi}u}_{\L^{2}(\om)}
\Big(\bscp{(\p_{\wt{\Psi}}\nu_{\Phi})\tau^{0}_{\Phi}u}{\tau^{0}_{\Phi}u}_{\L^{2}(\om)}\\
&\qquad\qquad
+2\Re\scp{\nu_{\Phi}\tau^{0}_{\Phi}u}{\p_{\wt{\Psi}}\tau^{0}_{\Phi}u}_{\L^{2}(\om)}\Big),\\
%{\color{red}  (\p_{\wt{\Psi}}\lambda_{1,\Phi})  
%\scp{\eps_{\Phi}\tau^{1}_{\Phi}E}{\tau^{1}_{\Phi}E}_{\L^{2}(\om)}^{2} }&=\scp{\eps_{\Phi}\tau^{1}_{\Phi}E}{\tau^{1}_{\Phi}E}_{\L^{2}(\om)}\p_{\wt{\Psi}}\scp{\mu_{\Phi}^{-1}\rot_{\gat}\tau^{1}_{\Phi}E}{\rot_{\gat}\tau^{1}_{\Phi}E}_{\L^{2}(\om)}\\&\qquad-\scp{\mu_{\Phi}^{-1}\rot_{\gat}\tau^{1}_{\Phi}E}{\rot_{\gat}\tau^{1}_{\Phi}E}_{\L^{2}(\om)}\p_{\wt{\Psi}}\scp{\eps_{\Phi}\tau^{1}_{\Phi}E}{\tau^{1}_{\Phi}E}_{\L^{2}(\om)}\\&=\scp{\eps_{\Phi}\tau^{1}_{\Phi}E}{\tau^{1}_{\Phi}E}_{\L^{2}(\om)}\Big(\bscp{(\p_{\wt{\Psi}}\mu_{\Phi}^{-1})\rot_{\gat}\tau^{1}_{\Phi}E}{\rot_{\gat}\tau^{1}_{\Phi}E}_{\L^{2}(\om)}\\&\qquad\qquad+2\Re\scp{\mu_{\Phi}^{-1}\rot_{\gat}\tau^{1}_{\Phi}E}{\rot_{\gat}\p_{\wt{\Psi}}\tau^{1}_{\Phi}E}_{\L^{2}(\om)}\Big)\\&\qquad-\scp{\mu_{\Phi}^{-1}\rot_{\gat}\tau^{1}_{\Phi}E}{\rot_{\gat}\tau^{1}_{\Phi}E}_{\L^{2}(\om)}\Big(\bscp{(\p_{\wt{\Psi}}\eps_{\Phi})\tau^{1}_{\Phi}E}{\tau^{1}_{\Phi}E}_{\L^{2}(\om)}\\&\qquad\qquad+2\Re\scp{\eps_{\Phi}\tau^{1}_{\Phi}E}{\p_{\wt{\Psi}}\tau^{1}_{\Phi}E}_{\L^{2}(\om)}\Big),  \\
(\p_{\wt{\Psi}}\lambda_{0,\Phi})
\scp{\eps_{\Phi}\tau^{1}_{\Phi}H}{\tau^{1}_{\Phi}H}_{\L^{2}(\om)}^{2}
&=\scp{\eps_{\Phi}\tau^{1}_{\Phi}H}{\tau^{1}_{\Phi}H}_{\L^{2}(\om)}
\p_{\wt{\Psi}}\scp{\nu_{\Phi}^{-1}\div_{\gan}\eps_{\Phi}\tau^{1}_{\Phi}H}{\div_{\gan}\eps_{\Phi}\tau^{1}_{\Phi}H}_{\L^{2}(\om)}\\
&\qquad
-\scp{\nu_{\Phi}^{-1}\div_{\gan}\eps_{\Phi}\tau^{1}_{\Phi}H}{\div_{\gan}\eps_{\Phi}\tau^{1}_{\Phi}H}_{\L^{2}(\om)}
\p_{\wt{\Psi}}\scp{\eps_{\Phi}\tau^{1}_{\Phi}H}{\tau^{1}_{\Phi}H}_{\L^{2}(\om)}\\
&=\scp{\eps_{\Phi}\tau^{1}_{\Phi}H}{\tau^{1}_{\Phi}H}_{\L^{2}(\om)}
\Big(\bscp{(\p_{\wt{\Psi}}\nu_{\Phi}^{-1})\div_{\gan}\eps_{\Phi}\tau^{1}_{\Phi}H}{\div_{\gan}\eps_{\Phi}\tau^{1}_{\Phi}H}_{\L^{2}(\om)}\\
&\qquad\qquad
+2\Re\bscp{\nu_{\Phi}^{-1}\div_{\gan}\eps_{\Phi}\tau^{1}_{\Phi}H}{\div_{\gan}\p_{\wt{\Psi}}(\eps_{\Phi}\tau^{1}_{\Phi}H)}_{\L^{2}(\om)}\Big)\\
&\qquad
-\scp{\nu_{\Phi}^{-1}\div_{\gan}\eps_{\Phi}\tau^{1}_{\Phi}H}{\div_{\gan}\eps_{\Phi}\tau^{1}_{\Phi}H}_{\L^{2}(\om)}
\Big(\bscp{(\p_{\wt{\Psi}}\eps_{\Phi}^{-1})\eps_{\Phi}\tau^{1}_{\Phi}H}{\eps_{\Phi}\tau^{1}_{\Phi}H}_{\L^{2}(\om)}\\
&\qquad\qquad+2\Re\bscp{\tau^{1}_{\Phi}H}{\p_{\wt{\Psi}}(\eps_{\Phi}\tau^{1}_{\Phi}H)}_{\L^{2}(\om)}\Big)
\end{align*}
}
and thus using
\begin{align*}
\scp{\eps_{\Phi}\na_{\gat}\tau^{0}_{\Phi}u}{\na_{\gat}\tau^{0}_{\Phi}u}_{\L^{2}(\om)}
&=\lambda_{0,\Phi}\scp{\nu_{\Phi}\tau^{0}_{\Phi}u}{\tau^{0}_{\Phi}u}_{\L^{2}(\om)},\\
%{\color{red}\scp{\mu_{\Phi}^{-1}\rot_{\gat}\tau^{1}_{\Phi}E}{\rot_{\gat}\tau^{1}_{\Phi}E}_{\L^{2}(\om)} }&=\lambda_{1,\Phi}\scp{\eps_{\Phi}\tau^{1}_{\Phi}E}{\tau^{1}_{\Phi}E}_{\L^{2}(\om)},\\ 
\scp{\nu_{\Phi}^{-1}\div_{\gan}\eps_{\Phi}\tau^{1}_{\Phi}H}{\div_{\gan}\eps_{\Phi}\tau^{1}_{\Phi}H}_{\L^{2}(\om)}
&=\lambda_{0,\Phi}\scp{\eps_{\Phi}\tau^{1}_{\Phi}H}{\tau^{1}_{\Phi}H}_{\L^{2}(\om)}
\end{align*}
we see
{\small
\begin{align*}
(\p_{\wt{\Psi}}\lambda_{0,\Phi})
\scp{\nu_{\Phi}\tau^{0}_{\Phi}u}{\tau^{0}_{\Phi}u}_{\L^{2}(\om)}
&=\bscp{(\p_{\wt{\Psi}}\eps_{\Phi})\na_{\gat}\tau^{0}_{\Phi}u}{\na_{\gat}\tau^{0}_{\Phi}u}_{\L^{2}(\om)}\\
&\qquad+2\Re\scp{\eps_{\Phi}\na_{\gat}\tau^{0}_{\Phi}u}{\na_{\gat}\p_{\wt{\Psi}}\tau^{0}_{\Phi}u}_{\L^{2}(\om)}\\
&\qquad
-\lambda_{0,\Phi}
\Big(\bscp{(\p_{\wt{\Psi}}\nu_{\Phi})\tau^{0}_{\Phi}u}{\tau^{0}_{\Phi}u}_{\L^{2}(\om)}
+2\Re\scp{\nu_{\Phi}\tau^{0}_{\Phi}u}{\p_{\wt{\Psi}}\tau^{0}_{\Phi}u}_{\L^{2}(\om)}\Big)\\
&=\bscp{(\p_{\wt{\Psi}}\eps_{\Phi})\na_{\gat}\tau^{0}_{\Phi}u}{\na_{\gat}\tau^{0}_{\Phi}u}_{\L^{2}(\om)}
+2\Re\scp{\A_{0}\tau^{0}_{\Phi}u}{\A_{0}\p_{\wt{\Psi}}\tau^{0}_{\Phi}u}_{\L^{2}_{\eps_{\Phi}}(\om)}\\
&\qquad
-\lambda_{0,\Phi}
\Big(\bscp{(\p_{\wt{\Psi}}\nu_{\Phi})\tau^{0}_{\Phi}u}{\tau^{0}_{\Phi}u}_{\L^{2}(\om)}
+2\Re\scp{\tau^{0}_{\Phi}u}{\p_{\wt{\Psi}}\tau^{0}_{\Phi}u}_{\L^{2}_{\nu_{\Phi}}(\om)}\Big)\\
&=\bscp{(\p_{\wt{\Psi}}\eps_{\Phi})\na_{\gat}\tau^{0}_{\Phi}u}{\na_{\gat}\tau^{0}_{\Phi}u}_{\L^{2}(\om)}
-\lambda_{0,\Phi}\bscp{(\p_{\wt{\Psi}}\nu_{\Phi})\tau^{0}_{\Phi}u}{\tau^{0}_{\Phi}u}_{\L^{2}(\om)}\\
&\qquad
+2\Re\bscp{\underbrace{(\A_{0}^{*}\A_{0}-\lambda_{0,\Phi})\tau^{0}_{\Phi}u}_{=0}}{\p_{\wt{\Psi}}\tau^{0}_{\Phi}u}_{\L^{2}_{\nu_{\Phi}}(\om)}\\
&=\scp{\na_{\gat}\tau^{0}_{\Phi}u}{\na_{\gat}\tau^{0}_{\Phi}u}_{\L^{2}_{(\p_{\wt{\Psi}}\eps_{\Phi})}(\om)}
-\lambda_{0,\Phi}\scp{\tau^{0}_{\Phi}u}{\tau^{0}_{\Phi}u}_{\L^{2}_{(\p_{\wt{\Psi}}\nu_{\Phi})}(\om)},\\
%{\color{red}(\p_{\wt{\Psi}}\lambda_{1,\Phi})\scp{\eps_{\Phi}\tau^{1}_{\Phi}E}{\tau^{1}_{\Phi}E}_{\L^{2}(\om)} }&=\bscp{(\p_{\wt{\Psi}}\mu_{\Phi}^{-1})\rot_{\gat}\tau^{1}_{\Phi}E}{\rot_{\gat}\tau^{1}_{\Phi}E}_{\L^{2}(\om)}\\&\qquad+2\Re\scp{\mu_{\Phi}^{-1}\rot_{\gat}\tau^{1}_{\Phi}E}{\rot_{\gat}\p_{\wt{\Psi}}\tau^{1}_{\Phi}E}_{\L^{2}(\om)}\\&\qquad-\lambda_{1,\Phi}\Big(\bscp{(\p_{\wt{\Psi}}\eps_{\Phi})\tau^{1}_{\Phi}E}{\tau^{1}_{\Phi}E}_{\L^{2}(\om)}+2\Re\scp{\eps_{\Phi}\tau^{1}_{\Phi}E}{\p_{\wt{\Psi}}\tau^{1}_{\Phi}E}_{\L^{2}(\om)}\Big)\\&=\bscp{(\p_{\wt{\Psi}}\mu_{\Phi}^{-1})\rot_{\gat}\tau^{1}_{\Phi}E}{\rot_{\gat}\tau^{1}_{\Phi}E}_{\L^{2}(\om)}\\&\qquad+2\Re\scp{\A_{1}\tau^{1}_{\Phi}E}{\A_{1}\p_{\wt{\Psi}}\tau^{1}_{\Phi}E}_{\L^{2}_{\mu_{\Phi}}(\om)}\\&\qquad-\lambda_{1,\Phi}\Big(\bscp{(\p_{\wt{\Psi}}\eps_{\Phi})\tau^{1}_{\Phi}E}{\tau^{1}_{\Phi}E}_{\L^{2}(\om)}+2\Re\scp{\tau^{1}_{\Phi}E}{\p_{\wt{\Psi}}\tau^{1}_{\Phi}E}_{\L^{2}_{\eps_{\Phi}}(\om)}\Big)\\&=\bscp{(\p_{\wt{\Psi}}\mu_{\Phi}^{-1})\rot_{\gat}\tau^{1}_{\Phi}E}{\rot_{\gat}\tau^{1}_{\Phi}E}_{\L^{2}(\om)}\\&\qquad-\lambda_{1,\Phi}\bscp{(\p_{\wt{\Psi}}\eps_{\Phi})\tau^{1}_{\Phi}E}{\tau^{1}_{\Phi}E}_{\L^{2}(\om)}\\&\qquad+2\Re\bscp{\underbrace{(\A_{1}^{*}\A_{1}-\lambda_{1,\Phi})\tau^{1}_{\Phi}E}_{=0}}{\p_{\wt{\Psi}}\tau^{1}_{\Phi}E}_{\L^{2}_{\eps_{\Phi}}(\om)}\\&=\scp{\rot_{\gat}\tau^{1}_{\Phi}E}{\rot_{\gat}\tau^{1}_{\Phi}E}_{\L^{2}_{(\p_{\wt{\Psi}}\mu_{\Phi}^{-1})}(\om)}-\lambda_{1,\Phi}\scp{\tau^{1}_{\Phi}E}{\tau^{1}_{\Phi}E}_{\L^{2}_{(\p_{\wt{\Psi}}\eps_{\Phi})}(\om)},\\
(\p_{\wt{\Psi}}\lambda_{0,\Phi})
\scp{\eps_{\Phi}\tau^{1}_{\Phi}H}{\tau^{1}_{\Phi}H}_{\L^{2}(\om)}
&=\bscp{(\p_{\wt{\Psi}}\nu_{\Phi}^{-1})\div_{\gan}\eps_{\Phi}\tau^{1}_{\Phi}H}{\div_{\gan}\eps_{\Phi}\tau^{1}_{\Phi}H}_{\L^{2}(\om)}\\
&\qquad+2\Re\bscp{\nu_{\Phi}^{-1}\div_{\gan}\eps_{\Phi}\tau^{1}_{\Phi}H}{\div_{\gan}\p_{\wt{\Psi}}(\eps_{\Phi}\tau^{1}_{\Phi}H)}_{\L^{2}(\om)}\\
&\qquad
-\lambda_{0,\Phi}
\Big(2\Re\bscp{\tau^{1}_{\Phi}H}{\p_{\wt{\Psi}}(\eps_{\Phi}\tau^{1}_{\Phi}H)}_{\L^{2}(\om)}\\
&\qquad\qquad+\bscp{(\p_{\wt{\Psi}}\eps_{\Phi}^{-1})\eps_{\Phi}\tau^{1}_{\Phi}H}{\eps_{\Phi}\tau^{1}_{\Phi}H}_{\L^{2}(\om)}\Big)\\
&=\bscp{(\p_{\wt{\Psi}}\nu_{\Phi}^{-1})\div_{\gan}\eps_{\Phi}\tau^{1}_{\Phi}H}{\div_{\gan}\eps_{\Phi}\tau^{1}_{\Phi}H}_{\L^{2}(\om)}\\
&\qquad+2\Re\bscp{\A_{0}^{*}\tau^{1}_{\Phi}H}{\A_{0}^{*}\eps_{\Phi}^{-1}\p_{\wt{\Psi}}(\eps_{\Phi}\tau^{1}_{\Phi}H)}_{\L^{2}_{\nu_{\Phi}}(\om)}\\
&\qquad
-\lambda_{0,\Phi}
\Big(\bscp{(\p_{\wt{\Psi}}\eps_{\Phi}^{-1})\eps_{\Phi}\tau^{1}_{\Phi}H}{\eps_{\Phi}\tau^{1}_{\Phi}H}_{\L^{2}(\om)}\\
&\qquad\qquad+2\Re\bscp{\tau^{1}_{\Phi}H}{\eps_{\Phi}^{-1}\p_{\wt{\Psi}}(\eps_{\Phi}\tau^{1}_{\Phi}H)}_{\L^{2}_{\eps_{\Phi}}(\om)}\Big)\\
&=\bscp{(\p_{\wt{\Psi}}\nu_{\Phi}^{-1})\div_{\gan}\eps_{\Phi}\tau^{1}_{\Phi}H}{\div_{\gan}\eps_{\Phi}\tau^{1}_{\Phi}H}_{\L^{2}(\om)}\\
&\qquad-\lambda_{0,\Phi}\bscp{(\p_{\wt{\Psi}}\eps_{\Phi}^{-1})\eps_{\Phi}\tau^{1}_{\Phi}H}{\eps_{\Phi}\tau^{1}_{\Phi}H}_{\L^{2}(\om)}\\
&\qquad
+2\Re\bscp{\underbrace{(\A_{0}\A_{0}^{*}-\lambda_{0,\Phi})\tau^{1}_{\Phi}H}_{=0}}{\eps_{\Phi}^{-1}\p_{\wt{\Psi}}(\eps_{\Phi}\tau^{1}_{\Phi}H)}_{\L^{2}_{\eps_{\Phi}}(\om)}\\
&=\scp{\div_{\gan}\eps_{\Phi}\tau^{1}_{\Phi}H}{\div_{\gan}\eps_{\Phi}\tau^{1}_{\Phi}H}_{\L^{2}_{(\p_{\wt{\Psi}}\nu_{\Phi}^{-1})}(\om)}
+\lambda_{0,\Phi}\scp{\tau^{1}_{\Phi}H}{\tau^{1}_{\Phi}H}_{\L^{2}_{(\p_{\wt{\Psi}}\eps_{\Phi})}(\om)}.
\end{align*}
}
Note that 
$\p_{\wt{\Psi}}\eps_{\Phi}=-\eps_{\Phi}(\p_{\wt{\Psi}}\eps_{\Phi}^{-1})\eps_{\Phi}$
by Remark \ref{rem:derivatives2}.
Therefore, for normalised eigenfields $u$, $E$, and $H$ we obtain the assertions.
\end{proof}

\begin{proof}[\bf Longer Proof of Theorem \ref{theo:derevformal2}]
By Theorem \ref{theo:transtheo}, Corollary \ref{cor:transtheo}, Remark \ref{rem:transtheo1} and Lemma~\ref{lem:derivatives} we see
{\small
\begin{align*}
\p_{\wt{\Psi}}\lambda_{0,\Phi}
&=\bscp{(\p_{\wt{\Psi}}\eps_{\Phi})\na_{\gat}\tau^{0}_{\Phi}u}{\na_{\gat}\tau^{0}_{\Phi}u}_{\L^{2}(\om)}
-\lambda_{0,\Phi}\bscp{(\p_{\wt{\Psi}}\nu_{\Phi})\tau^{0}_{\Phi}u}{\tau^{0}_{\Phi}u}_{\L^{2}(\om)}\\
&=\Bscp{(\det J_{\Phi})\big(
\p_{\wt{\Psi}}\wt{\eps}+(\wt{\div\Psi})\wt{\eps}-2\sym(\wt{J_{\Psi}}\wt{\eps})
\big)J_{\Phi}^{-\top}\na_{\gat}\tau^{0}_{\Phi}u}{J_{\Phi}^{-\top}\na_{\gat}\tau^{0}_{\Phi}u}_{\L^{2}(\om)}\\
&\qquad-\lambda_{0,\Phi}
\Bscp{(\det J_{\Phi})\big(\p_{\wt{\Psi}}\wt{\nu}+(\wt{\div\Psi})\wt{\nu}\big)\tau^{0}_{\Phi}u}{\tau^{0}_{\Phi}u}_{\L^{2}(\om)}\\
&=\Bscp{(\det J_{\Phi})\big(
\p_{\wt{\Psi}}\wt{\eps}+(\wt{\div\Psi})\wt{\eps}-2\sym(\wt{J_{\Psi}}\wt{\eps})
\big)\wt{\na_{\gatp}u}}{\wt{\na_{\gatp}u}}_{\L^{2}(\om)}\\
&\qquad-\lambda_{0,\Phi}
\Bscp{(\det J_{\Phi})\big(\p_{\wt{\Psi}}\wt{\nu}+(\wt{\div\Psi})\wt{\nu}\big)\wt{u}}{\wt{u}}_{\L^{2}(\om)}\\
&=\Bscp{\big(
\p_{\Psi}\eps+(\div\Psi)\eps-2\sym(J_{\Psi}\eps)
\big)\na_{\gatp}u}{\na_{\gatp}u}_{\L^{2}(\omp)}\\
&\qquad-\lambda_{0,\Phi}
\Bscp{\big(\p_{\Psi}\nu+(\div\Psi)\nu\big)u}{u}_{\L^{2}(\omp)},\\
\p_{\wt{\Psi}}\lambda_{1,\Phi}
&=\bscp{(\p_{\wt{\Psi}}\mu_{\Phi}^{-1})\rot_{\gat}\tau^{1}_{\Phi}E}{\rot_{\gat}\tau^{1}_{\Phi}E}_{\L^{2}(\om)}
-\lambda_{1,\Phi}\bscp{(\p_{\wt{\Psi}}\eps_{\Phi})\tau^{1}_{\Phi}E}{\tau^{1}_{\Phi}E}_{\L^{2}(\om)},\\
&=\Bscp{(\det J_{\Phi})^{-1}\big(
\p_{\wt{\Psi}}\wt{\mu^{-1}}-(\wt{\div\Psi})\wt{\mu^{-1}}+2\sym(\wt{\mu^{-1}}\wt{J_{\Psi}})
\big)J_{\Phi}\rot_{\gat}\tau^{1}_{\Phi}E}{J_{\Phi}\rot_{\gat}\tau^{1}_{\Phi}E}_{\L^{2}(\om)}\\
&\qquad-\lambda_{1,\Phi}\Bscp{(\det J_{\Phi})\big(
\p_{\wt{\Psi}}\wt{\eps}+(\wt{\div\Psi})\wt{\eps}-2\sym(\wt{J_{\Psi}}\wt{\eps})
\big)J_{\Phi}^{-\top}\tau^{1}_{\Phi}E}{J_{\Phi}^{-\top}\tau^{1}_{\Phi}E}_{\L^{2}(\om)},\\
&=\Bscp{(\det J_{\Phi})\big(
\p_{\wt{\Psi}}\wt{\mu^{-1}}-(\wt{\div\Psi})\wt{\mu^{-1}}+2\sym(\wt{\mu^{-1}}\wt{J_{\Psi}})
\big)\wt{\rot_{\gatp}E}}{\wt{\rot_{\gatp}E}}_{\L^{2}(\om)}\\
&\qquad-\lambda_{1,\Phi}\Bscp{(\det J_{\Phi})\big(
\p_{\wt{\Psi}}\wt{\eps}+(\wt{\div\Psi})\wt{\eps}-2\sym(\wt{J_{\Psi}}\wt{\eps})
\big)\wt{E}}{\wt{E}}_{\L^{2}(\om)},\\
&=\Bscp{\big(
\p_{\Psi}\mu^{-1}-(\div\Psi)\mu^{-1}+2\sym(\mu^{-1}J_{\Psi})
\big)\rot_{\gatp}E}{\rot_{\gatp}E}_{\L^{2}(\omp)}\\
&\qquad-\lambda_{1,\Phi}\Bscp{\big(
\p_{\Psi}\eps+(\div\Psi)\eps-2\sym(J_{\Psi}\eps)
\big)E}{E}_{\L^{2}(\omp)},\\
\p_{\wt{\Psi}}\lambda_{0,\Phi}
&=\bscp{(\p_{\wt{\Psi}}\nu_{\Phi}^{-1})\div_{\gan}\eps_{\Phi}\tau^{1}_{\Phi}H}{\div_{\gan}\eps_{\Phi}\tau^{1}_{\Phi}H}_{\L^{2}(\om)}
+\lambda_{0,\Phi}\bscp{(\p_{\wt{\Psi}}\eps_{\Phi})\tau^{1}_{\Phi}H}{\tau^{1}_{\Phi}H}_{\L^{2}(\om)}\\
&=\Bscp{(\det J_{\Phi})^{-1}\big(\p_{\wt{\Psi}}\wt{\nu^{-1}}-(\wt{\div\Psi})\wt{\nu^{-1}}\big)
\div_{\gan}\eps_{\Phi}\tau^{1}_{\Phi}H}{\div_{\gan}\eps_{\Phi}\tau^{1}_{\Phi}H}_{\L^{2}(\om)}\\
&\qquad+\lambda_{0,\Phi}\Bscp{(\det J_{\Phi})\big(
\p_{\wt{\Psi}}\wt{\eps}+(\wt{\div\Psi})\wt{\eps}-2\sym(\wt{J_{\Psi}}\wt{\eps})
\big)J_{\Phi}^{-\top}\tau^{1}_{\Phi}H}{J_{\Phi}^{-\top}\tau^{1}_{\Phi}H}_{\L^{2}(\om)}\\
&=\Bscp{(\det J_{\Phi})\big(\p_{\wt{\Psi}}\wt{\nu^{-1}}-(\wt{\div\Psi})\wt{\nu^{-1}}\big)
\wt{\div_{\ganp}\eps H}}{\wt{\div_{\ganp}\eps H}}_{\L^{2}(\om)}\\
&\qquad+\lambda_{0,\Phi}\Bscp{(\det J_{\Phi})\big(
\p_{\wt{\Psi}}\wt{\eps}+(\wt{\div\Psi})\wt{\eps}-2\sym(\wt{J_{\Psi}}\wt{\eps})
\big)\wt{H}}{\wt{H}}_{\L^{2}(\om)}\\
&=\Bscp{\big(\p_{\Psi}\nu^{-1}-(\div\Psi)\nu^{-1}\big)
\div_{\ganp}\eps H}{\div_{\ganp}\eps H}_{\L^{2}(\omp)}\\
&\qquad+\lambda_{0,\Phi}\Bscp{\big(
\p_{\Psi}\eps+(\div\Psi)\eps-2\sym(J_{\Psi}\eps)
\big)H}{H}_{\L^{2}(\omp)},
\end{align*}
}
finishing the proof.
\end{proof}

%%%%%%%%%%%%%%%%%%%%%%%%%%%%%%%%%%%%%%%%%%%%%%%%%%%%%%%%%%%%%%%%%%%%%%%%%%%%%%%%%%%%%%%%%%%%%%%%%%%%%

\vspace*{5mm}
\hrule
\vspace*{3mm}

%%%%%%%%%%%%%%%%%%%%%%%%%%%%%%%%%%%%%%%%%%%%%%%%%%%%%%%%%%%%%%%%%%%%%%%%%%%%%%%%%%%%%%%%%%%%%%%%%%%%%

\end{document}